\numberwithin{equation}{section}
\newtheorem{thm}{Theorem}
\newtheorem{lem}{Lemma}
\newtheorem*{rem}{Remark}
 \numberwithin{equation}{section}
\begin{document}
\title{Uniform Asymptotic Expansions for the Discrete Chebyshev Polynomials}
\author{J. H. Pan$^\text{a}$ and R. Wong$^\text{b}$}
\date{}
\maketitle  \noindent \emph{$^\text{a}$ Department of Mathematics,
City University of Hong
Kong, Tat Chee Avenue, Kowloon, Hong Kong\\
$^\text{b}$ Liu Bie Ju Centre for Mathematical Sciences, City
University of Hong Kong, Tat Chee Avenue, Kowloon, Hong Kong\\}
\begin{abstract}
The discrete Chebyshev polynomials $t_n(x,N)$ are orthogonal with
respect to a distribution function, which is a step function with
jumps one unit at the points $x=0,1,\cdots, N-1$, N being a fixed
positive integer. By using a double integral representation, we
derive two asymptotic expansions for $t_{n}(aN,N+1)$ in the double
scaling limit, namely, $N\rightarrow\infty$ and $n/N\rightarrow b$,
where $b\in(0,1)$ and $a\in(-\infty,\infty)$. One expansion involves
the confluent hypergeometric function and holds uniformly for
$a\in[0,\frac{1}{2}]$, and the other involves the Gamma function and
holds uniformly for $a\in(-\infty, 0)$. Both intervals of validity
of these two expansions can be extended slightly to include a
neighborhood of the origin. Asymptotic expansions for
$a\geq\frac{1}{2}$ can be obtained via a symmetry relation of
$t_{n}(aN,N+1)$ with respect to $a=\frac{1}{2}$. Asymptotic formulas
for small and large zeros of $t_{n}(x,N+1)$ are also given.
\end{abstract}

\newpage
\section{INTRODUCTION}
In Szeg$\ddot{o}$ \cite[p.33]{Szego}, the discrete Chebyshev
polynomials are defined by
\begin{equation}\label{DCP}
    t_n(x,N)=n!\triangle^n\binom{x}{n}\binom{x-N}{n},
    \qquad n=0,1,2,\cdot\cdot\cdot,N-1,
\end{equation}
where $\triangle$ denotes the difference operator
\begin{equation}
\begin{split}
  \triangle f(x)& = f(x+1)-f(x), \\
  \triangle^n f(x)& = \triangle\left\{\triangle^{n-1}f(x)\right\} \\
   &= f(x+n)-\binom{n}{1}f(x+n-1)+\cdot\cdot\cdot+(-1)^n f(x).
\end{split}
\end{equation}
These polynomials are orthogonal with respect to the distribution
$\mathrm{d}\alpha(x)$ of Stieltjes type, where $\alpha(x)$ is a step
function with jumps one unit at the points
$x=0,1,\cdot\cdot\cdot,N-1$, $N$ being a fixed positive integer.
More precisely, we have
\begin{equation}\label{}
    \int_{-\infty}^{\infty}t_n(x,N)t_m(x,N)\mathrm{d}\alpha(x)=\sum_{x=0,1,\cdot\cdot\cdot,N-1}t_n(x,N)t_m(x,N)=0
\end{equation}
if $n\neq m$, and
\begin{equation}
\begin{split}
\int_{-\infty}^{\infty}\left\{t_n(x,N)\right\}^2\mathrm{d}\alpha(x)&=\sum_{x=0,1,\cdot\cdot\cdot,N-1}\left\{t_n(x,N)\right\}^2\\
&=\frac{N(N^2-1^2)(N^2-2^2)\cdot\cdot\cdot(N^2-n^2)}{2n+1}
\end{split}
\end{equation}
for $n,m=0,1,2,\cdot\cdot\cdot,N-1$. The last two formulas hold for
all non-negative values of $n$ and $m$; in fact, they are trivial
for $n\geq N$ or $m\geq N$, since $t_n(x,N)=0$ for
$x=0,1,2,\cdot\cdot\cdot,N-1$, if
$n\geq N$.\\

The discrete Chebyshev polynomials can also be defined as a special
case of Hahn polynomials
\begin{equation}\label{definition of Q}
\begin{split}
Q_n(x;\alpha,\beta,N) &= {_3F_2}(-n,-x,n+\alpha+\beta+1;-N,\alpha+1;1) \\
   &= \sum_{k=0}^{n}\frac{(-n)_k(-x)_k(n+\alpha+\beta+1)_k}{(-N)_k(\alpha+1)_k
   k!},
\end{split}
\end{equation}
and we have
\begin{equation}\label{relation between DCP and Hahn}
\begin{split}
t_n(x,N)&=(-1)^n(N-n)_n Q_n(x;0,0,N-1)\\
&=(-1)^n(N-n)_n\sum_{k=0}^n\frac{(-n)_k(-x)_k(n+1)_k}{(-N+1)_k k!
   k!};
\end{split}
\end{equation}
see \cite[p.174 and 176]{Richard} and \cite{Chihara}. These
polynomials are used in least squares polynomial approximation
\cite[p.350]{Hildebrand} and sequential smoothing of numerical data
\cite{Morrison}. (In \cite{Hildebrand}, they are referred to as Gram
polynomials; see Chihara
\cite[p.162]{Chihara}.)\\

In this paper, we are concerned with the problem of finding
asymptotic expansions for the polynomials $t_n(x,N)$. Already, a
tremendous amount of research has been carried out on the
asymptotics of various discrete orthogonal polynomials. For
instance, we have \cite{Goh} and \cite{Rui} for Charlier
polynomials, \cite{xiaojin} and \cite{Wang} for Meixner polynomials,
\cite{Qiu} and \cite{Dai} for Krawtchouk polynomials, and \cite{Goh
and Wimp} and \cite{Lee} for Tricomi-Carlitz polynomials. There is
also an important research monograph by Baik et al. \cite{Baik} on a
general asymptotic method, based on the Riemann-Hilbert approach,
which in principle can be applied to all discrete orthogonal
polynomials. However, their result is so general that it does not
provide explicit asymptotic formulas for specific polynomials.
Strangely enough, for the discrete Chebyshev polynomials there does
not seem to be any asymptotic result in the literature; at least, we
have not been able to find any, and the
purpose of this paper is to fill in this gap.\\

For convenience, we shall consider the behavior of $t_n(x,N+1)$
instead of $t_n(x,N)$. Since the Hahn polynomials satisfy the
equation \cite[(1.15)]{symmetry}
\begin{equation}\label{}
    Q_n(N-x;\alpha,\beta, N)=\frac{Q_n(x;\alpha,\beta,N)}{Q_n(N;\alpha,\beta, N)}
\end{equation}
and $Q_n(N;0,0,N)=(-1)^n$ (note that there are slight differences in
the definition of $Q_n$ \cite[(1.1)]{symmetry}), the discrete
Chebyshev polynomials enjoy the symmetry relation
\begin{equation}\label{symmetricity}
    t_n(x,N+1)=(-1)^n t_n(N-x,N+1).
\end{equation}
In view of (\ref{symmetricity}), we may restrict $x$ to the interval
$-\infty<x\leq\frac{1}{2}N$. We are interested in the behavior of
$t_n(x,N+1)$ as $n, N\rightarrow\infty$ in such a way that the
ratios
\begin{equation}\label{a and b}
    a=x/N\qquad\text{and}\qquad b=n/N
\end{equation}
satisfy the inequalities
\begin{equation}\label{the range of a}
    -\infty<a\leq\frac{1}{2}\qquad\text{and}\qquad 0<b<1.
\end{equation}
As a special case of our general result, we will show that for fixed
value of $x\in[0,\infty)$, there exists a number $\eta<0$ such that
\begin{equation}\label{fix temp 1}
\begin{split}
t_n(x,N+1)=&\frac{(-1)^{n+1}\Gamma(n+N+2)N^x n^{-2x-2}\Gamma(x+1)}{\Gamma(N+1)\pi}\\
    &\times\left\{\sin\pi
    x\left[1+O\left(\frac{1}{N}\right)\right]+O\left(e^{\eta
    N}\right)\right\}.
\end{split}
\end{equation}
This result also holds when $x$ is negative, except that the term
$O(e^{\eta N})$ is absent.\\

The arrangement of the present paper is as follows. In \S\ref{sec
2}, we present two (2-dimensional) integral representations for
$t_n(x,N+1)$, one for the case $x<0$ and the other for $x\geq 0$. In
\S\ref{sec 3}, we transform the integration variables $(w,t)$ to two
new variables $(u,\tau)$, and deduce the double integrals to their
canonical forms, from which uniform asymptotic expansions can be
derived by a standard integration-by-parts procedure. In \S\ref{sec
4}, we prove that the transformation $(w,t)\rightarrow(u,\tau)$ is
one-to-one and analytic. In \S\ref{sec 5}, we investigate the
analyticity of the integrands of the double integrals in (\ref{IP
for x>0 deformed}) and (\ref{IR after mapping in x<0}). The desired
asymptotic expansions are then derived in \S\ref{sec 6}, and the
estimation of the error terms is done in \S\ref{sec 7}. In
\S\ref{sec 8}, we state the final results and give a list of special
cases; and we extend the regions of validity of the final results in
\S\ref{validity region}. In \S\ref{sec 9}, we present some
asymptotic formulas for the large and small zeros of the polynomial
under consideration.

\section{INTEGRAL REPRESENTATION}\label{sec 2}
From (\ref{relation between DCP and Hahn}), we have
\begin{equation}\label{integral temp 1}
\begin{split}
t_n(x,N+1)&=(-1)^n(N+1-n)_n Q_n(x;0,0,N)\\
&=(-1)^n(N+1-n)_n\sum_{k=0}^n\frac{(-n)_k(-x)_k(n+1)_k}{(-N)_k k!
   k!}.
\end{split}
\end{equation}
Using beta integral, we also have
\begin{equation}
\begin{split}
\frac{(n+1)_k}{(-N)_k} &= (-1)^k \frac{\Gamma(n+1+k)\Gamma(N+1-k)}{\Gamma(n+1)\Gamma(N+1)}\\
   &=(-1)^k\frac{\Gamma(n+N+2)}{\Gamma(n+1)\Gamma(N+1)}\int_0^1(1-t)^{n+k}t^{N-k}\rm{d}t\label{tempb}
\end{split}
\end{equation}
for $0\leq k\leq n\leq N$. Substituting (\ref{tempb}) in
(\ref{integral temp 1}) and interchanging the summation and
integration signs, we obtain
\begin{equation}\label{DCP and Meixner Polynomials}
\begin{split}
t_n(x,N+1)=&(-1)^n\frac{\Gamma(n+N+2)}{\Gamma(n+1)\Gamma(N+1-n)}\\
    &\times \int_0^1
(1-t)^n t^N{_2F_1}(-n,-x;1;1-t^{-1})\mathrm{d}t.
\end{split}
\end{equation}
In the above integral, we now use the new scales $a=x/N$ and $b=n/N$
introduced in (\ref{a and b}), and consider separate cases (i)
$0\leq a\leq\frac{1}{2}$ and (ii) $a<0$; cf. (\ref{the range of a}).\\

Recall the identity \cite[(15.8.1)]{handbook}
\begin{equation}\label{integral temp 2}
    _2F_1(-n,-x;1;1-t^{-1})=t^{-n}{_2F_1}(-n,1+x;1;1-t)
\end{equation}
and the contour integral representation \cite[p.287]{Richard}
\begin{equation}\label{integral temp 3}
_2F_1(a,b;c;z)=\frac{\Gamma(c)\Gamma(1+b-c)}{2\pi
i\Gamma(b)}\int_{\gamma_1} w^{b-1}
(w-1)^{c-b-1}(1-wz)^{-a}\mathrm{d}w,
\end{equation}
where $\text{Re } b>0$ and the curve $\gamma_1$ starts at $w=0$,
runs along the lower edge of the positive real line towards $w=1$,
encircles the point $w=1$ in the counterclockwise direction, and
returns to the origin along the upper edge of the positive real
line. For case (i) $0\leq a\leq\frac{1}{2}$, we apply (\ref{integral
temp 2}) and (\ref{integral temp 3}) to (\ref{DCP and Meixner
Polynomials}), and obtain the double integral representation
\begin{equation}\label{IR of DCP for x>0}
t_n(x,N+1)=\frac{(-1)^n}{2\pi
i}\frac{\Gamma(n+N+2)}{\Gamma(n+1)\Gamma(N-n+1)}\int_0^1\int_{\gamma_1}\frac{1}{w-1}
e^{N f(t,w)}\mathrm{d}w\mathrm{d}t,
\end{equation}
where
\begin{equation}\label{phase function f for x>0}
f(t,w)=b \ln(1-t)+(1-b)\ln t+a\ln w-a \ln(w-1)+b \ln[1-(1-t)w].
\end{equation}
The function $f(t,w)$ has a cut $(-\infty,1]$ in the $w$-plane.
Since the coefficients of $\ln t$, $\ln(1-t)$ and $\ln[1-(1-t)w]$ in
$N f(t,w)$ are all nonnegative integers, there is no need to have
cuts in the $t$-plane for $N f(t,w)$.\\

For case (ii), we use instead of (\ref{integral temp 3}) the contour
integral representation
\begin{equation}\label{integral temp 4}
_2F_1(a,b;c;z)=\frac{-\Gamma(c)\Gamma(1-b)}{2\pi
i\Gamma(c-b)}\int_{\gamma_2} (-w)^{b-1}
(1-w)^{c-b-1}(1-wz)^{-a}\mathrm{d}w,
\end{equation}
where $\text{Re } c> \text{Re } b$ and the curve $\gamma_2$ starts
at $w=1$, moves along the upper edge of the positive real line
towards $w=0$, encircles the origin in the counterclockwise
direction, and returns to $w=1$ along the lower edge of the positive
real line. (A proof of this formula can be given along the same
lines as that of (\ref{integral temp 3}).) Thus, we have
\begin{equation}\label{IR of DCP for x<0}
t_n(x,N+1)=\frac{(-1)^n}{2\pi
i}\frac{\Gamma(n+N+2)}{\Gamma(n+1)\Gamma(N-n+1)}\int_0^1\int_{\gamma_2}\frac{1}{w-1}e^{N
\widetilde{f}(t,w)}\mathrm{d}w\mathrm{d}t,
\end{equation}
where
\begin{equation}\label{phase function f for x<0}
\widetilde{f}(t,w)=b \ln(1-t)+(1-b)\ln t+a\ln(-w)-a \ln(1-w)+b
\ln[1-(1-t)w].
\end{equation}
Similar to $f(t,w)$ in (\ref{phase function f for x>0}), the
function $\widetilde{f}(t,w)$ has a cut $[0,+\infty)$ in the
$w$-plane and no cut in the $t$-plane.

\section{REDUCTION TO CANONICAL INTEGRALS}\label{sec 3}
To obtain large-$\lambda$ behavior of higher dimensional integral
\begin{equation*}
    I(\lambda)=\int_{\gamma}f(z)e^{\lambda S(z)}\mathrm{d}z,\qquad
    z=(z_1,z_2,\cdot\cdot\cdot,z_n)\in\mathbb{C}^n,
\end{equation*}
where $\mathrm{d}z=\mathrm{d}z_1\cdot\cdot\cdot\mathrm{d}z_n$ and
$\gamma$ is a smooth manifold of (real) dimension $n$, Fdeoryuk
\cite{Fedoryuk} showed that just like the classical method of
steepest descent, the saddle points of the phase function $S(z)$
(i.e., the zeros of the first-order derivatives of $S(z)$) play an
important role; see also Kaminski \cite{Kaminski}. So, let us first
investigate the location of the saddle points of the phase functions
$f(t,w)$ and $\widetilde{f}(t,w)$ in (\ref{phase function f for
x>0}) and
(\ref{phase function f for x<0}).\\

Upon solving the equations
\begin{equation*}\label{}
 \frac{\partial f}{\partial t}(t,w)=0\qquad\quad \text{and}\quad \qquad  \frac{\partial f}{\partial w}(t,w)=0
\end{equation*}
and the equations
\begin{equation*}\label{}
 \frac{\partial \widetilde{f}}{\partial t}(t,w)=0\qquad\quad \text{and}\quad \qquad  \frac{\partial \widetilde{f}}{\partial
 w}(t,w)=0,
\end{equation*}
one finds that the phase functions $f(t,w)$ and $\widetilde{f}(t,w)$
have the same two sets of saddle points

\begin{equation}\label{sets of saddle points 1}
  t_{+}=\frac{2-2a-b^2+ b\sqrt{b^2-4a+4a^2}}{2(1-a)(1+b)},\qquad  w_+=\frac{b+\sqrt{b^2-4a+4a^2}}{2b}
\end{equation}
and
\begin{equation}\label{sets of saddle points 2}
 t_{-}=\frac{2-2a-b^2-
 b\sqrt{b^2-4a+4a^2}}{2(1-a)(1+b)},\qquad  w_-=\frac{b-\sqrt{b^2-4a+4a^2}}{2b} .
\end{equation}
Note that $w_+$ and $w_-$, as well as $t_+$ and $t_-$, coalesce when
$a$ approaches $a_+$ and $a_-$, where
\begin{equation}\label{critical value of a}
    a_\pm=\frac{1\pm\sqrt{1-b^2}}{2}.
\end{equation}
The following facts are easily verified:
\begin{enumerate}
  \item For $a<0$, $w_\pm$ are both real and $w_-<0<1<w_+$. As $a\rightarrow
  0$, $w_-\rightarrow 0^-$ and $w_+\rightarrow 1^+$.
  \item For $0< a<a_-$, both $w_+$ and $w_-$ are real and
  $0<w_-<\frac{1}{2}<w_+<1$. As $a\rightarrow a_-$, both $w_+$ and $w_-$ approach $\frac{1}{2}$.
  \item For $a_-<a<a_+$, $w_\pm$ are complex conjugates, the real
  parts of which are both equal to $\frac{1}{2}$. At $a=\frac{1}{2}$, $w_\pm$
  attain their highest and lowest point on the line
  $\text{Re }{w}=\frac{1}{2}$.
  \item The locations of $w_\pm$ in the cases $a_+<a<1$ and $a>1$ are similar to those in the
  cases
  $0<a<a_-$ and $a<0$, respectively. This fact also reflects in the symmetry property of the discrete Chebyshev polynomial $t_n(aN, N+1)$ in $a$ with respect to $a=\frac{1}{2}$;
  cf. (\ref{symmetricity}) and (\ref{the range of a}).
\end{enumerate}
Figure \ref{w+-} summarizes the movements of $w_\pm$.\\

\begin{figure}[!htbp]
\centering
   \includegraphics[scale=0.65]{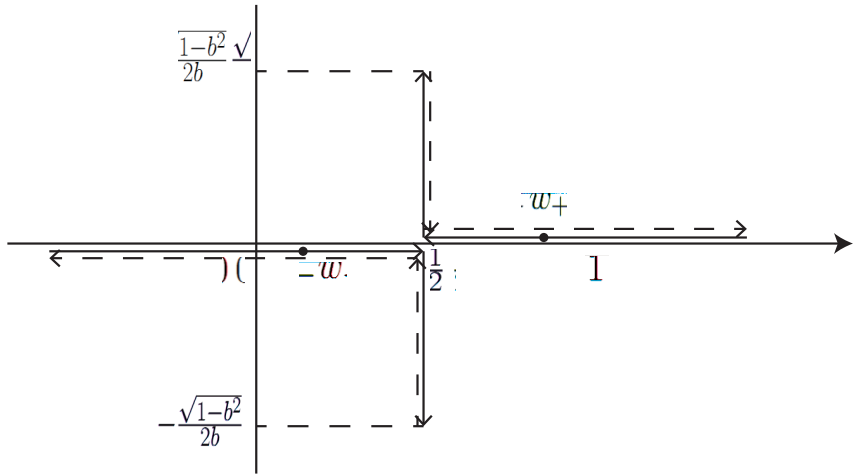}\\
 \caption{Movements of $w_\pm$.}\label{w+-}
\end{figure}

In view of (\ref{the range of a}), we only need consider the case
$-\infty<a\leq\frac{1}{2}$. We again divide our discussion into two
subcases: (i) $0\leq a\leq\frac{1}{2}$ and (ii) $-\infty<a<0$.\\

{\textbf{Case (i)} $0\leq a\leq\frac{1}{2}$.\\

Returning to the integral representation (\ref{IR of DCP for x>0}),
we now show that this double integral can be reduced to a canonical
form, and that the original integration surface
$[0,1]\times\gamma_1$ in the $t\times w$ plane can be deformed into
a new integration surface. An outline of the procedure is as
follows. First, for each fix $w\in\gamma_1$, we find a steepest
descent path of the variable $t$, which passes through a relevant
saddle point $t_0(w)$ depending on $w$. We denote this path by
$\rho_t$. Note that not only the saddle point $t_0(w)$, but all
points $t$ on the path $\rho_t$ depend on $w$. Thus, the phase
function $f(t_0(w),w)$ is now a function of $w$ alone. Second,
deform the integration path $\gamma_1$ in (\ref{IR of DCP for x>0})
into a steepest descent path of $w$, passing through $w_\pm$ given
in (\ref{sets of saddle points 1}) and (\ref{sets of saddle points
2}). We denote this path by $\gamma_w$. It will be shown that these
saddle points are indeed saddle points of $f(t_0(w),w)$.\\

To find the relevant saddle point $t_0(w)$, we solve the equation
\begin{equation*}
\frac{\partial}{\partial t}f(t,w)=0,
\end{equation*}
and obtain
\begin{equation}\label{saddle points for t}
    t_0^{\pm}(w)=\frac{2w-1\pm\sqrt{1+4b^2(w-1)w}}{2(1+b)w}.
\end{equation}
Here, we have introduced two more cuts in the $w$-plane, both on the
vertical line $\text{Re }w=\frac{1}{2}$, one from
$(b+i\sqrt{1-b^2})/2b$ all the way to $w=\frac{1}{2}+i\infty$, and
the other from $(b-i\sqrt{1-b^2})/2b$ all the way to
$w=\frac{1}{2}-i\infty$. As remarked earlier, the two saddle points
$(t_+,w_+)$ and $(t_-,w_-)$ in (\ref{sets of saddle points 1}) and
(\ref{sets of saddle points 2}) play an important role in
determining the asymptotic behavior of $t_n(x,N+1)$. So, we must be
careful in choosing the right saddle point from the two given in
(\ref{saddle points for t}). For $a\leq\frac{1}{2}$, it can be shown
that $t_0^+(w_\pm)=t_\pm$. Thus, $t_0^+(w)$ is our relevant saddle
point. Throughout the following discussion, we shall simply write $t_0(w)$ for $t_0^+(w)$, when no misunderstanding would arise.\\

As we have explained previously, for fixed $w$ in the whole
$w$-plane and $b\in(0,1)$, the function $e^{Nf(t,w)}$ inside the
integral in (\ref{IR of DCP for x>0}) has no singularity in the
$t$-plane. However, to make it possible for us to deform the
original integration path $[0,1]$ into a steepest descent path
$\rho_t$, we have to define the function $f(t,w)$ in such a way that
it avoids the cut $(-\infty,1]$ in the $w$-plane, and three
additional cuts in the $t$-plane, $(-\infty,0]$, $[1,\infty)$ and
also the radial line starting from $1-1/w$ and satisfying
$1-(1-t)w<0$. With this in mind, we now define the steepest descent
path $\rho_t$, passing through $t_0(w)$, by
\begin{equation}\label{SDP equation for t}
\begin{split}
   \text{Im }{f(t,w)}&=\text{Im }{f(t_0(w),w)},\\
   \text{Re }{f(t,w)}&\leq\text{Re }{f(t_0(w),w)};
   \end{split}
\end{equation}
see Figure \ref{SDP of t fig 1} and Figure \ref{SDP of t fig 2}. It
is now standard to define a transformation $t\rightarrow\tau$ by
\begin{equation}\label{mapping t to tau}
    \tau^2=f(t_0(w),w)-f(t,w).
\end{equation}
\begin{figure}
\centering
\begin{minipage}[t]{0.4\textwidth}
\centering
  \includegraphics[width=100pt,bb=0 0 200 200]{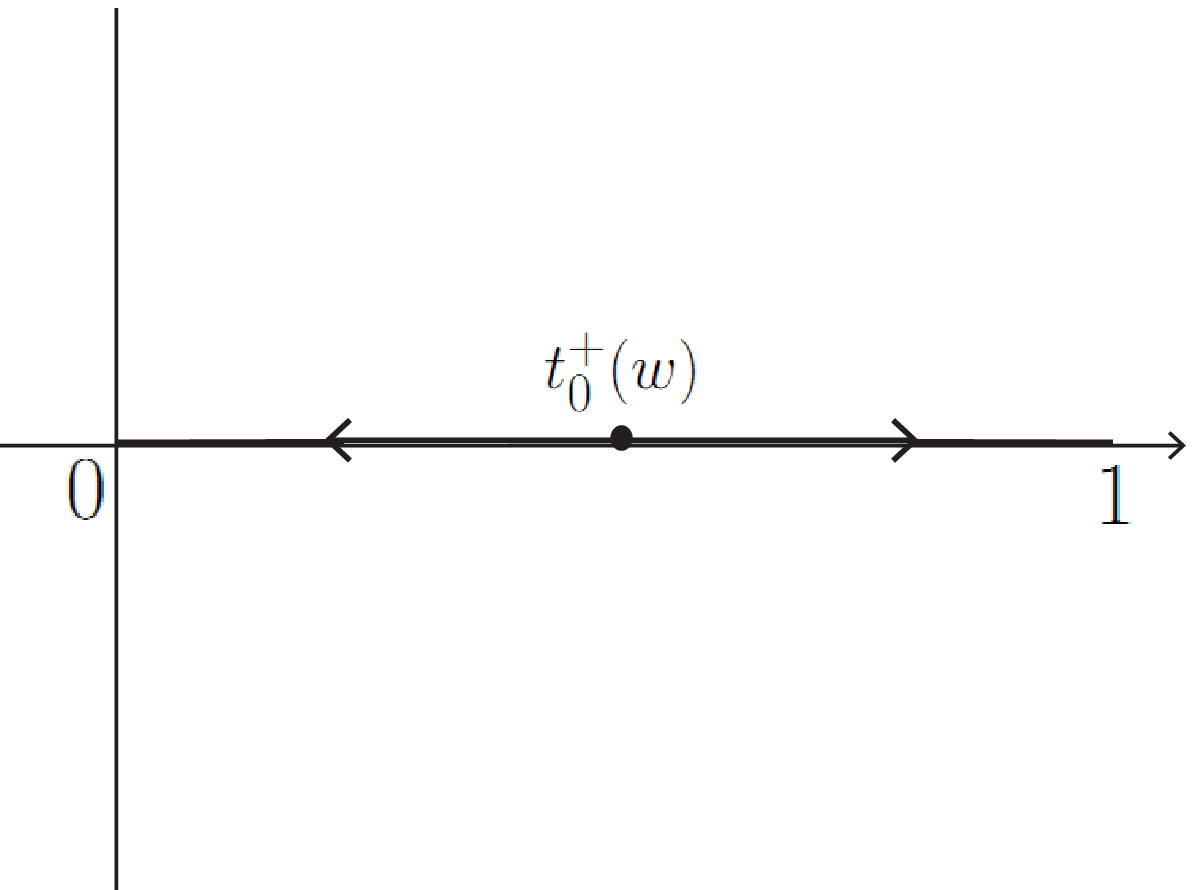}\\
  \caption{Steepest descent path in the $t$-plane when $w$ is real.}\label{SDP of t fig 1}
\end{minipage}\hfill
\begin{minipage}[t]{0.45\textwidth}
\centering
  \includegraphics[scale=0.5]{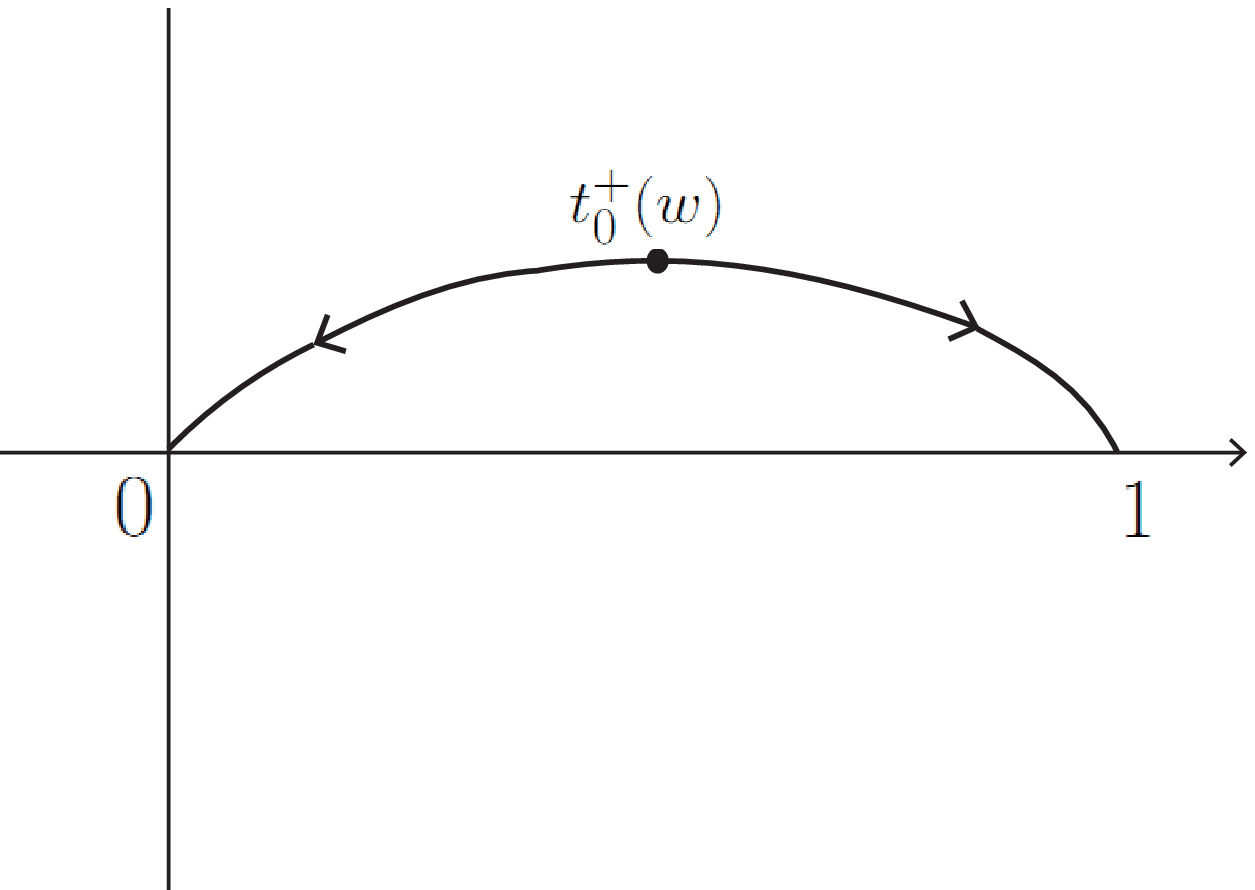}\\
  \caption{Steepest descent path in the $t$-plane when $w$ is complex; e.g., $w=0.5+0.2i$.}\label{SDP of t fig 2}
\end{minipage}\hfill
\end{figure}
\vspace{-0.6cm}\newline \noindent Note that we have $\tau=-\infty$
when $t=0$, and $\tau=+\infty$ when $t=1$. Furthermore, this mapping
is one-to-one with $\tau$ running from $-\infty$ to $\infty$, and
takes $t=t_0(w)$ to $\tau=0$. From $(\ref{phase function f for
x>0})$ and $(\ref{mapping t to tau})$, we have
\begin{equation}\label{dt over dtau}
        \begin{split}
            \frac{dt}{d\tau}& =\frac{2\tau}{-\partial f(t,w)/\partial
            t}=\frac{2\tau t(1-t)[1-(1-t)w]}{(1+b)w(t-t^+_0(w))(t-t^-_0(w))},\quad\quad \tau\neq
            0,\\
            \frac{dt}{d\tau}&=\left\{\frac{2t^+_0(w)(1-t^+_0(w))[1-(1-t^+_0(w))w]}{\sqrt{1+4b^2(w-1)w}}\right\}^{1/2}, \hspace{1.6cm} \tau=0.
        \end{split}
\end{equation}
Coupling (\ref{IR of DCP for x>0}) and (\ref{mapping t to tau})
gives
\begin{equation}\label{IP after the t mapping}
\begin{split}
t_n(x,N+1)=& \frac{(-1)^n}{2\pi
i}\frac{\Gamma(n+N+2)}{\Gamma(n+1)\Gamma(N-n+1)}\\
&\times \int_{-\infty}^{+\infty}\int_{\gamma_1}\frac{1}{w-1}e^{N
f({t_0(w)},w)}e^{-N\tau^2}\frac{\mathrm{d}t}{\mathrm{d}
\tau}\mathrm{d}w\mathrm{d}\tau,
\end{split}
\end{equation}
where $\gamma_1$ is the integration path of $w$ in (\ref{IR of DCP
for x>0}).
Note that the first variable of $f$ in (\ref{IP after the t
mapping}) is now $t_0(w)$, instead of $t$. So the phase function
depends only on $w$. Setting $\partial f(t_0(w),w)/\partial w=0$, we
obtain saddle points
%
\begin{equation}\label{saddle points of w}
    w_{\pm}=\frac{b\pm\sqrt{b^2-4a+4a^2}}{2b};
\end{equation}
cf. (\ref{sets of saddle points 1}) and (\ref{sets of saddle points
2}). We now deform $\gamma_1$ into steepest descent paths passing
through $w_+$ and $w_-$. Recall that the movements of $w_\pm$ are
summerized in Figure \ref{w+-}, as $a$ runs along the real line. In
the case $a_-<a\leq\frac{1}{2}$, the saddle points $w_\pm$ are
complex conjugate numbers with real part equal to $\frac{1}{2}$. The
steepest descent path passing through $w_+$ is given by
\begin{equation}\label{SDP equation for w+}
    \begin{split}
    \text{Im }{f(t_0(w),w)}&= \text{Im }{f(t_0(w_+),w_+)},\\
        \text{Re }{f(t_0(w),w)}&\leq \text{Re }{f(t_0(w_+),w_+)}.
        \end{split}
\end{equation}
The steepest descent path through $w_-$ can be obtained by using the
symmetry property with respect to the real axis; see Figure
\ref{w-sdp1}.\\
\begin{figure}
\centering
  \includegraphics[scale=0.5]{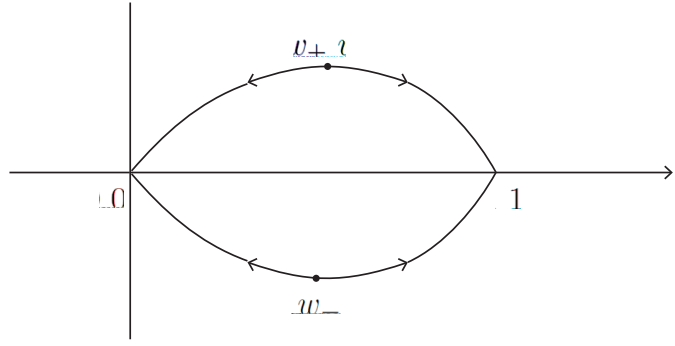}\\
  \caption{Steepest descent path of $w$ when $a_-<a\leq\frac{1}{2}$.}\label{w-sdp1}
\end{figure}
\begin{figure}
\centering
  \includegraphics[scale=0.5]{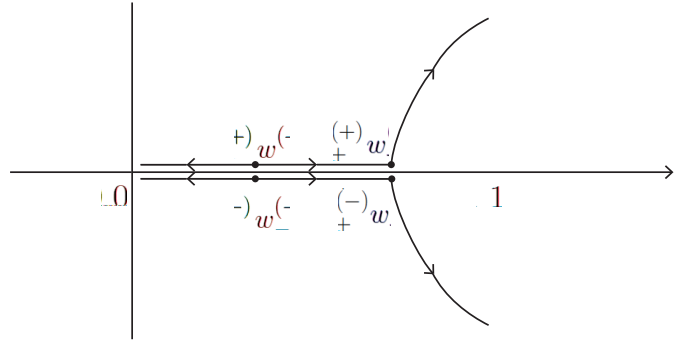}\\
  \caption{Steepest descent path of $w$ when $0\leq a<a_-$.}\label{w-sdp2}
\end{figure}

When $0\leq a< a_-$, both $w_\pm$ are real and lie in the interval
$[0,1]$; thus, $w_\pm$ appear on both upper and lower edges of the
cut along the real line. We denote these points by $w_\pm^{(+)}$ and
$w_\pm^{(-)}$; see Figure \ref{w-sdp2}. Since $\text{Re
}f(t_0(w_-^{(\pm)}),w_-^{(\pm)})>\text{Re
}f(t_0(w_+^{(\pm)}),w_+^{(\pm)})$, $w_-^{(\pm)}$ is a local maximum
and the steepest descent path through $w_-^{(\pm)}$ is along the
real axis. From Figure \ref{w+-}, it is clear that this path is on
the real interval $(0, w_+^{(\pm)})$. The steepest descent path
throught $w_+^{(\pm)}$ is the curve perpendicular to the real axis;
cf. Figure \ref{w-sdp2}.\\

Motivated by the work of Jin and Wong \cite{xiaojin}, the movements
of the saddle points $w_\pm$ and the steepest descent paths passing
through them suggest that we should compare the integral of $w$ in
(\ref{IP after the t mapping}) with the confluent hypergeometric
function \cite[p.326, (13.4.9)]{handbook}
\begin{equation}\label{m function a>0}
    M(d,c,z)=\frac{\Gamma(c)\Gamma(1+d-c)}{2\pi
    i\Gamma(d)}\int_{\gamma_1}u^{d-1}(u-1)^{c-d-1}e^{zu}\mathrm{d}u,
\end{equation}
where $\text{Re }{d}>0$ and the integration curve $\gamma_1$ is the
same one as that given in (\ref{integral temp 3}) or (\ref{IP after
the t mapping}). Since $M(d,c,z)$ is a meromorphic function of $c$
with poles at 0, $-1$, $-2$,$\cdot\cdot\cdot$, the function
$\textbf{M}(d,c,z):=M(d,c,z)/\Gamma(c)$ is entire in both $d$ and
$c$; see \cite[p.255]{Olver}. With $d=aN+1$ ($0\leq a\leq
\frac{1}{2}$), $c=1$, $z=\eta N$, (\ref{m function a>0}) gives
\begin{equation}\label{confluent hypergeometric function for a>0}
    \textbf{M}(aN+1,1,\eta N)=\frac{1}{2\pi
    i}\int_{\gamma_1}e^{N\psi(u)}\frac{\mathrm{d}u}{u-1},
\end{equation}
where
\begin{equation}\label{psi function}
    \psi(u)=a\ln u-a\ln(u-1)+\eta u
\end{equation}
and the saddle points of $\psi$ are given by
\begin{equation}\label{saddle points of u}
    u_\pm=\frac{\eta\pm\sqrt{\eta^2+4a\eta}}{2\eta}.
\end{equation}
The shape of the steepest descent paths of $\psi$ depends on the
sign of $a$ and the value of $\eta$. Here, we are concerned with
only the two cases: (i) $a\geq 0$ and $-4a<\eta<0$, (ii) $a\geq 0$
and
$\eta<-4a$; see Figures \ref{u-sdp1} and \ref{u-sdp2}.\\
\begin{figure}
\centering
  \includegraphics[scale=0.5]{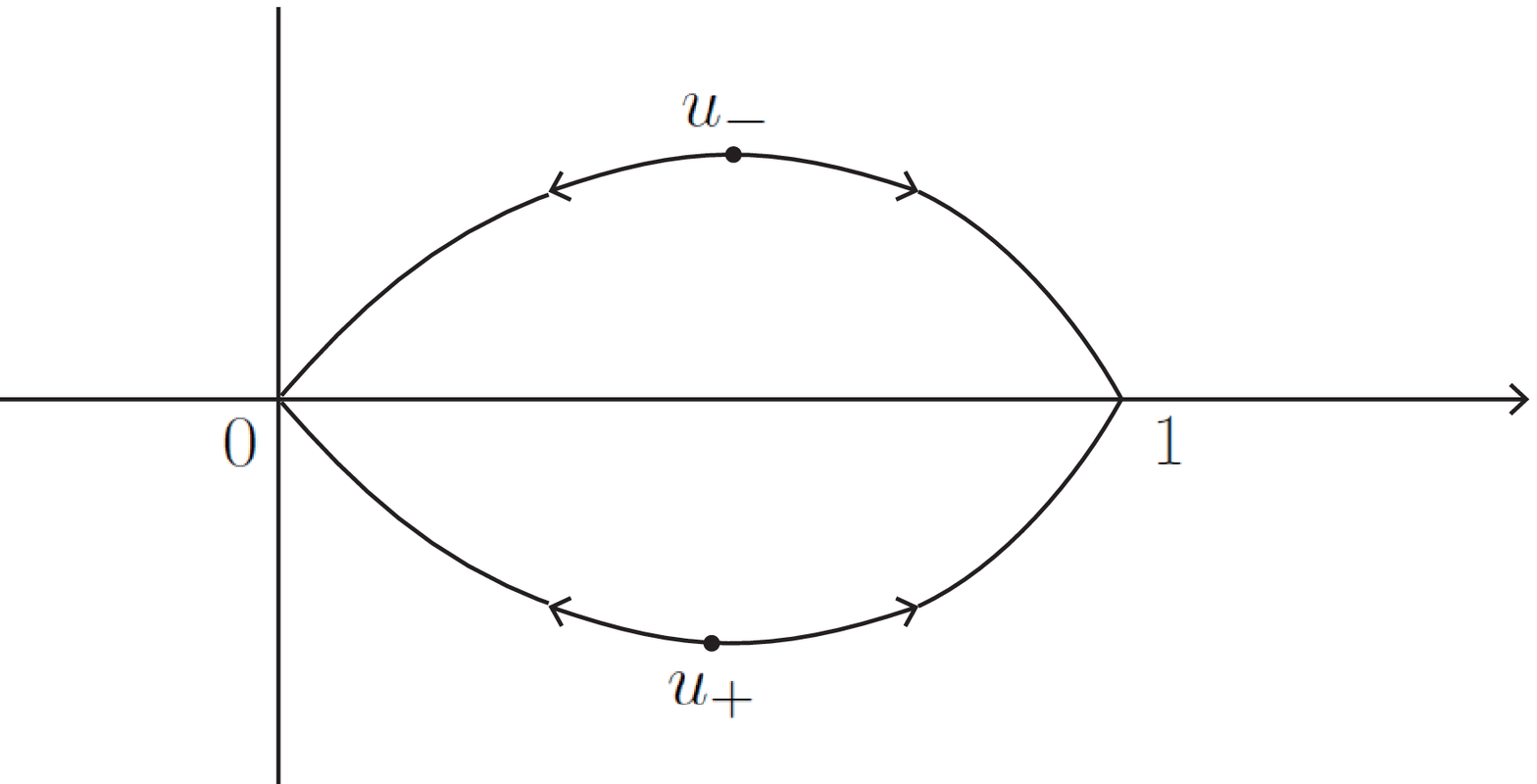}\\
  \caption{Steepest descent path of $u$ when $a>0$ and $-4a<\eta<0$.}\label{u-sdp1}
\end{figure}
\begin{figure}
\centering
  \includegraphics[scale=0.5]{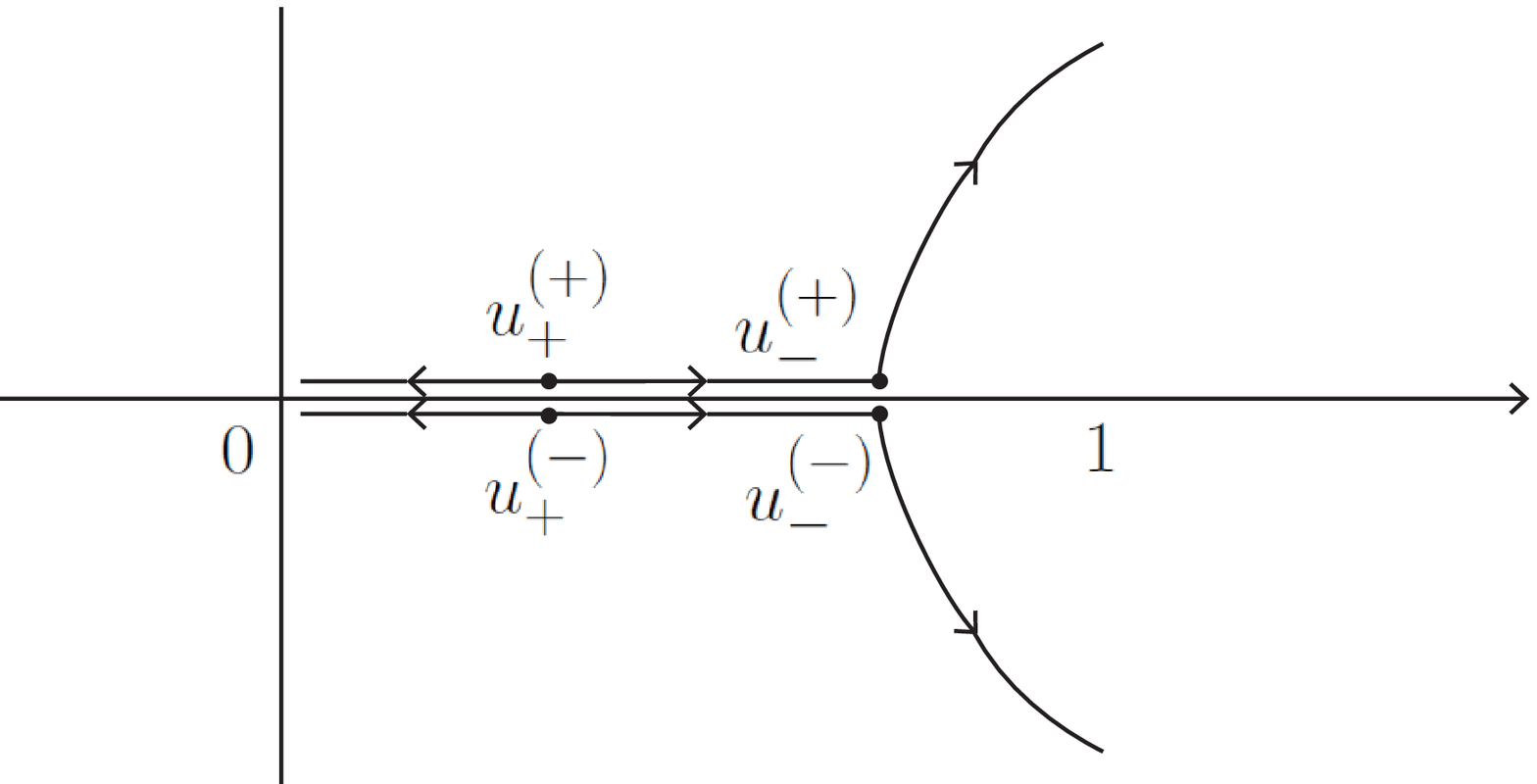}\\
  \caption{Steepest descent path of $u$ when $a>0$ and $\eta<-4a$.}\label{u-sdp2}
\end{figure}

We also observe that when $a\geq 0$, the movements of $u_\pm$ are
like those of $w_\mp$, with the corresponding subcases
\begin{eqnarray*}
  a_-<a<\frac{1}{2} &\leftrightarrow& -4a<\eta<0, \nonumber\\
  0\leq a\leq a_- &\leftrightarrow&  \eta\leq -4a. \label{the correspondence of
  subcases}
\end{eqnarray*}\label{}
Thus, we define the mapping $w\rightarrow u$ by
\begin{equation}
\begin{split}
    f(t_0(w),w)&=\psi(u)+\gamma\\
    &=a \ln u-a\ln(u-1)+\eta u+\gamma\label{mapping w to u}
\end{split}
\end{equation}
with
\begin{equation}\label{the relation between w+- and u+-}
    u(w_+)=u_-,\quad\quad\quad\quad u(w_-)=u_+,
\end{equation}
where $\eta$ and $\gamma$ are real numbers. Note that since $w_\pm$
are on both edges of the real line (similarly, $u_\pm$ are also on
both edges of the real line) when $0\leq a\leq a_-$, (\ref{the
relation between w+- and u+-}) should be understood in the sense:
\begin{subequations}
\begin{align}
    u(w_+^{(+)})&=u_-^{(+)},\quad\quad\quad\quad
    u(w_-^{(+)})=u_+^{(+)},\label{correspondence on upper edge}\\
    u(w_+^{(-)})&=u_-^{(-)},\quad\quad\quad\quad
    u(w_-^{(-)})=u_+^{(-)}.\label{correspondence on lower edge}
\end{align}
\end{subequations}
It can be easily verified that (\ref{correspondence on upper edge})
and (\ref{correspondence on lower edge}) are equivalent in the case
$0\leq a\leq a_-$. The existence and uniqueness of the numbers
$\eta$ and $\gamma$ as well as the one-to-one and analytic property
of the mapping $w\rightarrow u$ defined in (\ref{mapping w
to u}) will be established in the following section.\\

Note that we have chosen the coefficients of $\ln u$ and $\ln(u-1)$
in $\psi(u)$ to be $a$. The reason is that with this choice, the
local expansions of $f(t_0(w),w)$ at $w=0$ and $w=1$ and that of
$\psi(u)+\gamma$ at the corresponding points $u=0$ and $u=1$ are the
same.\\

From (\ref{mapping w to u}), we have
\begin{equation}
\begin{split}
\frac{\mathrm{d} w}{\mathrm{d} u} &= \frac{\mathrm{d}
\psi}{\mathrm{d} u}\left/\frac{\mathrm{d}
  f(t_0(w),w)}{\mathrm{d} w}\right.
 \\
   &= \frac{\eta(u-u_+)(u-u_-)w(1-w)\left[(2a-1)-\sqrt{1+4b^2w^2-4b^2w}\right]}{-2b^2u(u-1)(w-w_+)(w-w_-)}\label{dw over du, normal},
\end{split}
\end{equation}
for $u\neq u_\pm$, and by l'H$\hat{o}$pital's rule,
\begin{equation}\label{dw over du, saddle point}
    \frac{\mathrm{d} w}{\mathrm{d}
    u}=\left[\frac{\sqrt{\eta^2+4a\eta}(1-2a)(1-a)(-\eta)}{b^3\sqrt{b^2-4a+4a^2}}\right]^{1/2}
\end{equation}
for $u=u_\pm$. (Note that $\sqrt{1+4b^2w_\mp^2-4b^2w_\mp}=1-2a$). \\

By coupling (\ref{IP after the t mapping}) and (\ref{mapping w to
u}), the double integral representation of $t_n(x,N+1)$ given in
(\ref{IR of DCP for x>0}) is now reduced to the canonical form
\begin{equation}\label{IP for x>0 deformed}
\begin{split}
 t_n(x,N+1)=&\frac{(-1)^n}{2\pi
i}\frac{\Gamma(n+N+2)}{\Gamma(n+1)\Gamma(N-n+1)}e^{N\gamma}\\
&\qquad\times
\int_{-\infty}^{+\infty}\int_{\gamma_u}\frac{h(u,\tau)}{u-1}e^{N
\psi(u)-N\tau^2}\mathrm{d}u\mathrm{d}\tau,
\end{split}
\end{equation}
where
\begin{equation}\label{h function}
h(u,\tau)=\frac{u-1}{w-1}\frac{\mathrm{d} w}{\mathrm{d}
u}\frac{\mathrm{d} t}{\mathrm{d} \tau}
\end{equation}
and $\gamma_u$ is the steepest descent path depicted in Figures
\ref{u-sdp1} and \ref{u-sdp2} when $a_-<a\leq\frac{1}{2}$ and
$0\leq a<a_-$, respectively.\\

Before concluding our discussion of Case (i): $0\leq
a\leq\frac{1}{2}$, let us give a summary of the reduction process
from (\ref{IR of DCP for x>0}) to (\ref{IP for x>0 deformed}). We
first deformed the original surface
$[0,1]\times\gamma_1\in\mathbb{C}^2$ in (\ref{IR of DCP for x>0})
into $\rho_t\times \gamma_w\in\mathbb{C}^2$, where $\rho_t$ is the
steepest descent path in the $t$-plane of the phase function
$f(t,w)$ in (\ref{IR of DCP for x>0}), which passes through the
saddle point $t_0(w)=t_0^+(w)$ in (\ref{saddle points for t}) for
each fixed $w$, and where $\gamma_w$ is the steepest descent path of
$f(t_0(w),w)$ in the $w$-plane, which passes through the saddle
points $w_\pm$ given in (\ref{saddle points of w}). Next, for every
fixed $w$, we introduced a one-to-one analytic map $t\rightarrow
\tau$, defined by (\ref{mapping t to tau}), which maps $\rho_t$ to
$(-\infty,+\infty)$, and a map $w\rightarrow u$, defined by
(\ref{mapping w to u}), which maps $\gamma_w\rightarrow\gamma_u$,
where $\gamma_u$ is the steepest descent path in the $u$-plane for
the canonical integral (\ref{IP for x>0 deformed}). In the sense of
Kaminski \cite{Kaminski}, we may refer to $S_{\tau\times
u}:=(-\infty,\infty)\times \gamma_u$ as a steepest descent surface
in $\mathbb{C}^2$. Thus, the 2-dimensional transformation
$(t,w)\rightarrow(\tau,u)$ defined by
\begin{equation}\label{two dimensional mapping}
\begin{split}
    b \ln(1-t)+(1-b)\ln t&+a\ln w-a \ln(w-1)+b \ln[1-(1-t)w]\\
    &=-\tau^2+a\ln u-a\ln(u-1)+\eta u+\gamma
\end{split}
\end{equation}
is, in fact, given in two stages; see (\ref{phase function f for
x>0}), (\ref{mapping t to tau}) and (\ref{mapping w to u}). This
mapping $(t,w)\rightarrow(\tau,u)$ takes $\rho_t\times \gamma_w$ in
the $\mathbb{C}^2$-plane $(t,w)$ into a steepest descent surface
$S_{\tau\times u}$ in the $\mathbb{C}^2$-plane $(\tau,u)$. Hence, in
conclusion, we have made the following change of surfaces of
integration
\begin{equation*}
   [0,1]\times\gamma_1\in\mathcal{C}^2_{t\times w}\quad \longrightarrow \quad \rho_t\times \gamma_w \in\mathcal{C}^2_{t\times w} \quad \longrightarrow \quad S_{\tau\times u}\in\mathcal{C}^2_{\tau\times
   u}.
\end{equation*}

\textbf{Case (ii)} $-\infty<a<0.$\\

In this case, instead of (\ref{IR of DCP for x>0}), we use the
double integral representation (\ref{IR of DCP for x<0}). The
argument here is similar to that in Case (i). But, we note from
$\S\ref{sec 2}$ that when $a$ is negative, both $w_-$ and $w_+$ are
real and $w_-<0<1<w_+$. Taking account of the shape of the contour
$\gamma_2$ in (\ref{IR of DCP for x<0}), we need be concerned with
only the saddle point $w_-$, and not $w_+$.\\

Since $a$ is negative and hence less than $\frac{1}{2}$, the
relevant saddle point is again $t_0(w)=t_0^+(w)$; see the argument
following (\ref{saddle points for t}), except with $f$ replaced by
$\widetilde{f}$. Thus, (\ref{IR of DCP for x<0}) can be written as

\begin{equation}\label{IP after the t mapping a<0}
\begin{split}
t_n(x,N+1) =& \frac{(-1)^n}{2\pi
i}\frac{\Gamma(n+N+2)}{\Gamma(n+1)\Gamma(N-n+1)}\\
& \times
\int_{-\infty}^{+\infty}\int_{\gamma_2}\frac{1}{w-1}e^{N\cdot
\widetilde{f}({t_0(w)},w)}e^{-N\tau^2}\frac{\mathrm{d} t}{\mathrm{d}
\tau}\mathrm{d}w\mathrm{d}\tau, \end{split}
\end{equation}
where $\gamma_2$ is the integration path of $w$ in (\ref{IR of DCP
for x<0}); cf. (\ref{IP after the t mapping}). However, the mapping
$w\rightarrow u$ differs from that in Case (i). It can be shown that
the second derivative of the phase function
$\widetilde{f}(t_0(w),w)$ is positive at $w=w_-$, and the steepest
descent path passing through $w_-$ is perpendicular to the real
axis; see Figure \ref{w-sdp3}.
\begin{figure}[h]
\centering
  \includegraphics[scale=0.5]{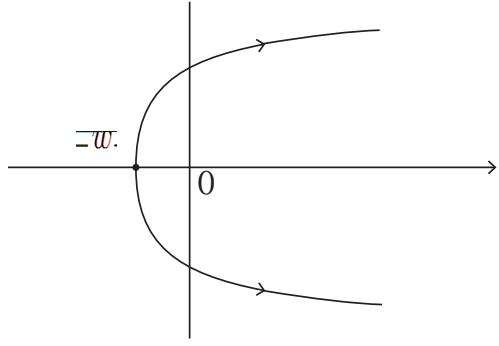}\\
  \caption{The steepest descent path of $w$, when $a<0$.}\label{w-sdp3}
\end{figure}

Recall the Hankel integral for the Gamma function
\begin{equation}\label{}
    \frac{1}{\Gamma(z)}=\frac{1}{2\pi
    i}\int^{(0+)}_{-\infty}e^uu^{-z}\mathrm{d}u,
\end{equation}
where the contour is a loop starting at $-\infty$, encircling the
origin in the counterclockwise direction, and returning to
$-\infty$. With $z=-a N+1$ and $u$ replaced by $-u$, we obtain
\begin{equation}\label{gamma function in use}
    \frac{1}{\Gamma(-a N+1)}=\frac{N^{aN}}{2\pi
    i}\int^{(0+)}_{\infty}\frac{1}{u}e^{N\widetilde{\psi}(u)}\mathrm{d}u,
\end{equation}
where
\begin{equation}\label{psi function when a<0}
    \widetilde{\psi}(u)=a\ln(-u)-u,
\end{equation}
and the saddle point of $\widetilde{\psi}$ is at $u=a$.\\

Observe the similarities of the saddle points and the integration
paths (see Figure \ref{w-sdp3}) in the two integrals (\ref{IP after
the t mapping a<0}) and (\ref{gamma function in use}). Make the
transformation $w\rightarrow u$ defined by
\begin{equation}\label{mapping w to u when x<0}
    \begin{split}
    \widetilde{f}(t_0(w),w)&=\widetilde{\psi}(u)+\gamma\\
    &=a\ln (-u)-u+\gamma
    \end{split}
\end{equation}
with
\begin{equation}\label{relation betwen w and u when a<0}
    u(w_-)=a,
\end{equation}
where $\gamma$ is a constant to be determined.\\

Coupling (\ref{IP after the t mapping a<0}) and (\ref{mapping w to u
when x<0}) gives
\begin{equation}\label{IR after mapping in x<0}
\begin{split}
    t_n(x,N+1)=&\frac{(-1)^n}{2\pi
i}\frac{\Gamma(n+N+2)}{\Gamma(n+1)\Gamma(N-n+1)}\\
& \times
e^{N\gamma}\int_{-\infty}^{+\infty}\int_{+\infty}^{(0+)}\frac{h(u,\tau)}{u}
e^{N \widetilde{\psi}(u)-N\tau^2}\mathrm{d}u\mathrm{d}\tau,
\end{split}
\end{equation}
where
\begin{equation}\label{h function when a<0}
    h(u,\tau)=\frac{u}{w-1}\frac{\mathrm{d} w}{\mathrm{d}
u}\frac{\mathrm{d} t}{\mathrm{d} \tau}.
\end{equation}
From (\ref{mapping w to u when x<0}), we have
\begin{equation}
\begin{split}
  \frac{\mathrm{d} w}{\mathrm{d} u}&=\frac{a-u}{u}\left/\frac{\mathrm{d} \widetilde{f}(t_0(w),w)}{\mathrm{d}
  w}\right.\\
  &=\frac{(a-u)w(1-w)\left[(1-2a)+\sqrt{1+4b^2w^2-4b^2w}\right]}{2ub^2(w-w_+)(w-w_-)}
  \label{dw over du when a<0}
  \end{split}
\end{equation}
for $u\neq a$, and by l'H$\hat{o}$pital's rule
\begin{equation}\label{dw over du when a<0 saddle point}
  \frac{\mathrm{d} w}{\mathrm{d} u} =
  \left\{\frac{(1-a)(1-2a)}{b^3\sqrt{b^2-4a+4a^2}}\right\}^{1/2}
\end{equation}
for $u=a$.

\section{THE MAPPING $w\rightarrow u$ IN (\ref{mapping w to u})}\label{sec 4}

Before establishing the existence and uniqueness of the constants
$\eta$ and $\gamma$ in equation (\ref{mapping w to u}) which defines
the mapping $w\rightarrow u$, we first need a preliminary result.

\begin{lem}\label{lem 1}
Let $0\leq a\leq\frac{1}{2}$, and put $g(\eta):=\psi(u_-)-\psi(u_+)$
when $a_-<a\leq\frac{1}{2}$, where
$u_\pm=(\eta\pm\sqrt{\eta^2+4a\eta})/2\eta$ are given in
(\ref{saddle points of u}); put
$g_\pm(\eta):=\psi(u_-^{(\pm)})-\psi(u_+^{(\pm)})$ when $0\leq a\leq
a_-$, where $u_\pm^{(+)}$ denote the $u_\pm$ on the upper edge of
the real line and $u_\pm^{(-)}$ denote the $u_\pm$ on the lower edge
of the real line; see Figure \ref{u-sdp2}. We have
\begin{itemize}
  \item[(i)] when $\eta\leq -4a$, $g_+(\eta)=g_-(\eta)=k(\eta)$, where
$k(\eta)$ is a real and monotonically increasing function with range
$(-\infty,0]$;
\item[(ii)] when $-4a<\eta<0$, $g(\eta)$ is purely imaginary and $-i
g(\eta)$ is a monotonically increasing and continuous function with
range $(-2a\pi,0)$.
\end{itemize}
\end{lem}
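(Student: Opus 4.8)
The plan is to reduce both assertions to the study of one real‑valued function of $\eta$, using two elementary facts about the saddle points $u_\pm$. First, $u_\pm$ are the roots of $\eta u^{2}-\eta u-a=0$, so $u_{+}+u_{-}=1$ and $u_{+}u_{-}=-a/\eta$; in particular $1-u_{\mp}=u_{\pm}$. Second, $\psi'(u_\pm)=0$; by the chain rule this gives $\frac{d}{d\eta}\psi\bigl(u_{\pm}(\eta)\bigr)=\partial_{\eta}\psi(u_\pm)=u_{\pm}$, and the same identity holds for the edge values in case (i) since $\tfrac{d}{du}[-\ln(u-1)]=\tfrac{d}{du}[-\ln(1-u)]$. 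Consequently, in every case the relevant difference has $\eta$‑derivative equal to $u_{-}(\eta)-u_{+}(\eta)$, which will produce the monotonicity, and the relation $1-u_\mp=u_\pm$ is what will kill the logarithmic branch ambiguities in the difference $\psi(u_-)-\psi(u_+)$.

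For part (i), i.e.\ $\eta\le-4a$, the saddle points $u_{\pm}=\tfrac12\bigl(1\mp\sqrt{1+4a/\eta}\bigr)$ are real and lie in $[0,1]$, hence on the two edges of the cut $(-\infty,1]$; there $\ln(u-1)=\ln(1-u)\pm i\pi$ on the upper/lower edge, so the additive constant $\mp ia\pi$ cancels from $g_\pm(\eta)=\psi(u_-^{(\pm)})-\psi(u_+^{(\pm)})$, and using $1-u_\mp=u_\pm$ one gets
\[
g_{+}(\eta)=g_{-}(\eta)=2a\ln\frac{u_{-}}{u_{+}}+\eta\,(u_{-}-u_{+})=:k(\eta),
\]
a real number. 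Then $k'(\eta)=u_{-}-u_{+}=\sqrt{1+4a/\eta}\ge0$, vanishing only at $\eta=-4a$, so $k$ is increasing; $k(-4a)=0$ because the saddles coalesce at $u=\tfrac12$, and since $u_{+}\sim -a/\eta\to0^{+}$ and $u_{-}\to1^{-}$ as $\eta\to-\infty$ one checks $k(\eta)=\eta+O(\log|\eta|)\to-\infty$. Hence $k$ maps $(-\infty,-4a]$ continuously and monotonically onto $(-\infty,0]$. (When $a=0$ the statement is trivial, $k(\eta)=\eta$.)

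For part (ii), i.e.\ $-4a<\eta<0$, we have $1+4a/\eta<0$; writing $r:=\sqrt{-4a/\eta-1}>0$ the saddle points become $u_{\pm}=\tfrac12(1\mp ir)$, complex conjugates off the real axis, where $\psi$ is single‑valued and $\psi(\bar u)=\overline{\psi(u)}$, so $\psi(u_{+})=\overline{\psi(u_{-})}$ and $g(\eta)=2i\,\operatorname{Im}\psi(u_{-})$ is purely imaginary. Reading off $\arg u_{-}=\arctan r$, $\arg(u_{-}-1)=\pi-\arctan r$, $\operatorname{Im}u_{-}=r/2$, and substituting $\eta=-4a/(1+r^{2})$, one obtains
\[
-ig(\eta)=4a\Bigl(\arctan r-\frac{r}{1+r^{2}}\Bigr)-2a\pi=:4a\,\phi(r)-2a\pi .
\]
Since $\phi(0)=0$, $\phi'(r)=2r^{2}/(1+r^{2})^{2}>0$, and $\phi(r)\to\pi/2$ as $r\to\infty$, and since $r$ increases from $0$ to $\infty$ as $\eta$ increases from $-4a$ to $0$, the function $-ig$ is continuous, strictly increasing, with range $(-2a\pi,0)$; monotonicity can alternatively be read off directly from $\frac{d}{d\eta}(-ig)=-i(u_{-}-u_{+})=-i(ir)=r>0$.

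I expect the main obstacle to be bookkeeping rather than anything conceptual: pinning down the correct branch of $\sqrt{\eta^{2}+4a\eta}$ and of $\ln(u-1)$, consistently with the contour and cut conventions fixed in \S\ref{sec 3}, so that $u_{-}$ sits on the upper edge (resp.\ in the upper half‑plane) and $u_{+}$ on the lower; verifying that these choices do make $\psi(\bar u)=\overline{\psi(u)}$; and handling the two degenerate limits $\eta\to-\infty$ (where $u_{+}\to0$ produces a logarithmic term in $k$) and $\eta\to0^{-}$ (where $u_{\pm}\to\tfrac12\pm i\infty$) carefully enough to identify the endpoints of the ranges. Once the cancellation giving $k(\eta)=2a\ln(u_-/u_+)+\eta(u_--u_+)$ and the reduction of $-ig$ to the one‑variable function $\phi(r)$ are in place, the rest is a short calculus exercise.
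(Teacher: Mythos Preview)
Your proof is correct and follows essentially the same route as the paper: write $\psi(u_-)-\psi(u_+)$ explicitly, verify reality/pure imaginarity via the branch conventions and conjugate symmetry, then establish monotonicity by differentiation and read off the range from the endpoint limits. Your execution is in fact a bit cleaner than the paper's---using Vieta's relation $1-u_\mp=u_\pm$ together with the saddle-point identity $\tfrac{d}{d\eta}\psi(u_\pm(\eta))=u_\pm$ gives $k'(\eta)=u_--u_+$ directly, whereas the paper reaches the same conclusion through an auxiliary substitution $\xi=\sqrt{\eta^{2}+4a\eta}/\eta$ and differentiates $\widetilde k(\xi)=2a\ln\frac{1-\xi}{1+\xi}+\frac{4a\xi}{1-\xi^{2}}$.
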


\begin{proof}
In case (i), we have $\eta^2+4a\eta>0$. By definition,
\begin{equation*}
    g_\pm(\eta)=a\ln u_-^{(\pm)}-a\ln(u_-^{(\pm)}-1)+\eta u_-^{(\pm)}-a\ln u_+^{(\pm)}+a\ln(u_+^{(\pm)}-1)-\eta
    u_+^{(\pm)}.
\end{equation*}
Recall the movements of $u_\pm$ shown in Figure \ref{u-sdp2}. Since
$u_\pm^{(-)}$ lie on the lower edge of the cut $[0,1]$, it follows
that
\begin{equation*}
    u_\pm^{(-)}-1 = |u_\pm^{(-)}-1|e^{-\pi i}=\frac{\eta-\sqrt{\eta^2+4a\eta}}{2\eta}e^{-\pi
    i}.
\end{equation*}
Similarly, since $u_\pm^{(+)}$ lie on the upper edge of the cut, we
have
\begin{equation*}
    u_\pm^{(+)}-1 =|u_\pm^{(+)}-1|e^{\pi i}=\frac{\eta+\sqrt{\eta^2+4a\eta}}{2\eta}e^{\pi
    i}.
\end{equation*}
Put
\begin{equation*}
    k(\eta):=2a\ln
    \frac{\eta-\sqrt{\eta^2+4a\eta}}{\eta+\sqrt{\eta^2+4a\eta}}-\sqrt{\eta^2+4a\eta}.
\end{equation*}
Then
\begin{equation*}
  g_+(\eta)=g_-(\eta)= k(\eta).
\end{equation*}
Let $\xi(\eta)=\sqrt{\eta^2+4a\eta}/\eta$. It is easily checked that
$\xi(\eta)$ is a monotonically increasing function in
$-\infty<\eta\leq-4a$ with range $(-1,0]$. If
\begin{equation*}
    \widetilde{k}(\xi):=2a\ln\frac{1-\xi}{1+\xi}+4a\frac{\xi}{1-\xi^2},
\end{equation*}
then,
\begin{equation*}
    \widetilde{k}'(\xi)=\frac{8\xi^2a}{(1-\xi^2)^2}>0,
\end{equation*}
i.e., $\widetilde{k}(\xi)$ is monotonically increasing in
$-1<\xi\leq 0$. Since $k(\eta)=\widetilde{k}(\xi(\eta))$, we
conclude that $k(\eta)$ is monotonically increasing in
$-\infty<\eta\leq-4a$
with range $(-\infty,0]$. Note that $k(\eta)=0$ when $\eta=-4a$, i.e., $a=a_-$.\\

In case (ii), we have $\eta^2+4a\eta<0$. Hence,
\begin{equation*}
    u_\pm=\frac{\eta\pm i\sqrt{-4a\eta-\eta^2}}{2\eta}.
\end{equation*}
From (\ref{psi function}), it is clear that $\text{Re
}\psi(u_+)=\text{Re }\psi(u_-)$ and $\text{Im }\psi(u_+)=-\text{Im
}\psi(u_-)$. Thus, $g(\eta)$ is purely imaginary. Since $\eta$ is
negative, we have
\begin{eqnarray*}
    g(\eta)&=& i
    2\text{Im }\left[a\ln\frac{\eta-i\sqrt{-\eta^2-4a\eta}}{2\eta}-a\ln\frac{-\eta-i\sqrt{-\eta^2-4a\eta}}{2\eta}\right]-i\sqrt{-\eta^2-4a\eta}\\
    &=& i \left[2a
           \left(\arctan\frac{\sqrt{-\eta^2-4a\eta}}{-\eta}-\pi+\arctan\frac{\sqrt{-\eta^2-4a\eta}}{-\eta}\right)-\sqrt{-\eta^2-4a\eta}\right].
\end{eqnarray*}
Since $-ig'(\eta)=-\sqrt{-\eta^2-4a\eta}/\eta>0$, the function
$-ig(\eta)$ is monotonically increasing. Furthermore, since
$g(-4a)=-2\pi a i$ and $g(0)=0$, the range of this function is
$(-2a\pi,0)$.
\end{proof}

\begin{thm}\label{thm 1}
Let $0\leq a\leq \frac{1}{2}$, (i) When $a_-<a\leq \frac{1}{2}$,
there exist unique real numbers $\eta$ and $\gamma$ satisfying the
system of equations $u(w_+)=u_-$ and $u(w_-)=u_+$ in (\ref{the
relation between w+- and u+-}), i.e,
\begin{equation}\label{t+- and w+- and mapping in proof}
\begin{split}
  b\ln(1-t_+)+(1-b)&\ln t_++a \ln w_+-a\ln(w_+-1)+b\ln[1-(1-t_+)w_+] \\
   =& a \ln u_--a\ln(u_--1)+\eta u_-+\gamma, \\
  b\ln(1-t_-)+(1-b)&\ln t_-+a \ln w_--a\ln(w_--1)+b\ln[1-(1-t_-)w_-] \\
   =& a \ln u_+-a\ln(u_+-1)+\eta u_++\gamma;
\end{split}
\end{equation}
see also (\ref{phase function f for x>0}) and (\ref{mapping w to
u}). (ii) When $0\leq a\leq a_-$, there exist unique real numbers
$\eta$ and $\gamma$ satisfying the system of equations in
(\ref{correspondence on upper edge}) (since (\ref{correspondence on
lower edge}) is equivalent to (\ref{correspondence on upper edge})),
i.e,
\begin{equation}\label{t+- and w+- and mapping in proof II}
\begin{split}
  b\ln(1-t_+)+(1-b)&\ln t_++a \ln w_+^{(+)}-a\ln(w_+^{(+)}-1)+b\ln[1-(1-t_+)w_+^{(+)}] \\
   =& a \ln u_-^{(+)}-a\ln(u_-^{(+)}-1)+\eta u_-^{(+)}+\gamma, \\
  b\ln(1-t_-)+(1-b)&\ln t_-+a \ln w_-^{(+)}-a\ln(w_-^{(+)}-1)+b\ln[1-(1-t_-)w_-^{(+)}] \\
   =& a \ln u_+^{(+)}-a\ln(u_+^{(+)}-1)+\eta u_+^{(+)}+\gamma;
\end{split}
\end{equation}
\end{thm}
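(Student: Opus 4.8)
The plan is to decouple the two equations in each part by subtraction, which eliminates $\gamma$ and leaves a single scalar equation for $\eta$ tailor-made for Lemma~\ref{lem 1}. Abbreviate $F_\pm:=f(t_0(w_\pm),w_\pm)$, with all branches fixed as in \S\ref{sec 3}. In part~(i), subtracting the two lines of (\ref{t+- and w+- and mapping in proof}) and using $u(w_+)=u_-$, $u(w_-)=u_+$ together with the definition $g(\eta)=\psi(u_-)-\psi(u_+)$, the system becomes equivalent to the single equation $F_+-F_-=g(\eta)$ supplemented by $\gamma=F_+-\psi(u_-)$; in part~(ii) the same manipulation on (\ref{t+- and w+- and mapping in proof II}) gives $F_+-F_-=k(\eta)$ supplemented by $\gamma=F_+-\psi(u_-^{(+)})$. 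Thus it suffices to solve the scalar equation for $\eta$, and afterwards check that the prescribed $\gamma$ is real and that the second equation is then automatic.

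For part~(i), $a_-<a\le\frac{1}{2}$: the saddle points satisfy $w_-=\overline{w_+}$ with $\text{Re }w_\pm=\frac{1}{2}$, and (for $a<\frac{1}{2}$, the endpoint $a=\frac{1}{2}$ following by continuity) $w_+$ lies strictly between $\frac{1}{2}$ and the branch point $\frac{1}{2}+i\sqrt{1-b^2}/(2b)$ of the radical in $t_0(w)$, hence off all cuts; since the coefficients of $f$ are real and every branch choice respects conjugation there, $F_-=\overline{F_+}$, so $F_+-F_-=2i\,\text{Im }F_+$ is purely imaginary. By Lemma~\ref{lem 1}(ii), $-ig$ is a continuous, strictly increasing bijection of $(-4a,0)$ onto $(-2a\pi,0)$, so the scalar equation has a unique solution $\eta\in(-4a,0)$ once one knows $2\,\text{Im }F_+\in(-2a\pi,0)$; existence then follows from the intermediate value theorem and uniqueness from strict monotonicity. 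Finally, comparing imaginary parts in $F_+-F_-=g(\eta)$ and using $\psi(u_+)=\overline{\psi(u_-)}$ gives $\text{Im }F_+=\text{Im }\psi(u_-)$, so $\gamma=F_+-\psi(u_-)$ is real; conjugating the first equation then yields the second.

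For part~(ii), $0\le a\le a_-$: the points $w_\pm$ and $u_\pm$ are real and lie on the cut $(-\infty,1]$; on its upper edge $\arg(w_\pm^{(+)}-1)=\pi$, while $t_\pm\in(0,1)$, $1+4b^2(w-1)w>0$ and $1-(1-t)w>0$ there, so the only non-real contribution to $F_\pm$ is the constant $-a\pi i$ from $-a\ln(w_\pm^{(+)}-1)$; hence $F_+-F_-$ is real. By Lemma~\ref{lem 1}(i), $k$ is a continuous, strictly increasing bijection of $(-\infty,-4a]$ onto $(-\infty,0]$, and since the steepest-descent analysis of \S\ref{sec 3} gives $\text{Re }f(t_0(w_-^{(+)}),w_-^{(+)})>\text{Re }f(t_0(w_+^{(+)}),w_+^{(+)})$, we have $F_+-F_-<0$; hence the scalar equation has a unique solution $\eta\in(-\infty,-4a)$ (with $\eta=-4a$ at $a=a_-$). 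Since $u_\pm^{(+)}\in(0,1)$ on the upper edge, $\text{Im }\psi(u_-^{(+)})=-a\pi$ as well, so $\gamma=F_+-\psi(u_-^{(+)})$ is real, and the equivalence of (\ref{correspondence on upper edge}) and (\ref{correspondence on lower edge}) noted in \S\ref{sec 3} completes the argument.

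The hard part is the inclusion $2\,\text{Im }F_+\in(-2a\pi,0)$ needed in part~(i): unlike part~(ii), where the steepest-descent geometry hands us the sign of $F_+-F_-$ directly, here one must actually control $\text{Im }f(t_0(w),w)$ at $w=w_+$, tracking the branch of the logarithm as $w$ crosses the vertical cuts $\text{Re }w=\frac{1}{2}$ introduced together with $t_0(w)$. I would do this by following $\text{Im }f(t_0(w),w)$ along the segment of $\text{Re }w=\frac{1}{2}$ from $w=\frac{1}{2}$ up to $w=w_+$, checking the limits as $a\to a_-^+$ (where it should reduce to the $-a\pi$ of part~(ii), i.e. $2\,\text{Im }F_+\to-2a\pi$) and as $a\to\frac{1}{2}^-$, and verifying monotonicity in $a$ in between; this is also consistent with the matching of the local expansions of $f(t_0(w),w)$ and $\psi(u)+\gamma$ at $w,u=0$ and $w,u=1$ remarked after (\ref{mapping w to u}).
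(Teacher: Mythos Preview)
Your overall strategy matches the paper's proof exactly: subtract the two equations to eliminate $\gamma$, reduce to a scalar equation for $\eta$, and invoke Lemma~\ref{lem 1}; then recover $\gamma$ from either equation. Part~(ii) is essentially complete and agrees with the paper, including the observation that the common imaginary contribution $-a\pi i$ on each side makes $\gamma$ real.

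The genuine gap is in part~(i): you correctly isolate the crux as the inclusion $2\,\text{Im }F_+\in(-2a\pi,0)$, but you only \emph{describe} a plan (track the branch along $\text{Re }w=\tfrac12$, check endpoints, verify monotonicity in $a$) without executing it. The paper closes this gap not by geometric branch-tracking but by writing down $q(a):=2\,\text{Im }F_+$ explicitly,
\[
q(a)=2\Bigl[2b\arctan\tfrac{-\sqrt{4a-4a^2-b^2}}{2-2a+b}+(1-b)\arctan\tfrac{b\sqrt{4a-4a^2-b^2}}{2-2a-b^2}+2a\arctan\tfrac{\sqrt{4a-4a^2-b^2}}{b}-a\pi\Bigr],
\]
and then differentiating: $q'(a)=-2\pi+4\arctan\bigl(\sqrt{4a-4a^2-b^2}/b\bigr)<0$ gives $q(a)<q(a_-)=-2\pi a_-<0$, while $\frac{d}{da}\bigl(q(a)+2\pi a\bigr)=4\arctan\bigl(\sqrt{4a-4a^2-b^2}/b\bigr)>0$ with value $0$ at $a=a_-$ gives $q(a)>-2\pi a$. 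This is precisely the monotonicity-plus-endpoint argument you outline, but it requires the explicit formula for $q(a)$ to make the derivatives computable; without it your proposal is a plan rather than a proof. To complete your argument you should supply this computation (or an equivalent one).
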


\begin{proof}
When $0\leq a\leq a_-$, we denote the left-hand sides and the
right-hand sides of the two equations in (\ref{t+- and w+- and
mapping in proof II}) by $L_1$, $L_2$ and $R_1$, $R_2$,
respectively. We recall that both $w_\pm^{(+)}$ are real and lie in
the interval $[0,1]$, and that they move along the upper edge of the
real line; see Figure \ref{w-sdp2}. Thus, in $L_1$ and $L_2$, we
have
\begin{equation*}
    a \ln(w_\pm^{(+)}-1)=a\ln(1-w_\pm^{(+)})+ a\pi i.
\end{equation*}
In this case, since $0<t_\pm<1$ and $0<w_\pm^{(+)}<1$, it can easily
be verified that $1-(1-t_\pm)w_\pm^{(+)}>0$. Hence, the terms in
$L_1$ and $L_2$ are well defined. Furthermore, since all terms
containing $t_\pm$ are real, we have $L_1-L_2=\text{Re
}{L_1}-\text{Re }{L_2}$. Also, for $0<w<1$,
\begin{eqnarray*}
  \text{Re } f(t_0(w),w) &=& b\ln(1-t_0(w))+(1-b)\ln t_0(w)+a\ln w \\
   &&-a\ln(1-w) +b\ln[1-(1-t_0(w))w],
\end{eqnarray*}
which is a real-valued function. It can be shown that $\text{Re }
f(t_0(w),w)$ attains its maximum value at $w=w_-$. Hence, $\text{Re
} f(t_-,w_-)>\text{Re } f(t_+,w_+)$. and there exists a non-positive
number $r(a)$ such that $L_1-L_2=r(a)$. By Lemma \ref{lem 1}(i),
$g_+(\eta)=k(\eta)$, where $k(\eta)$ is a real-valued monotonically
increasing function with range $(-\infty,0]$. Therefore, for each
$a\in[0,a_-]$, there must exist a unique $\eta$ such that
$k(\eta)=r(a)$ and the equations in (\ref{t+- and w+- and mapping in
proof II}) hold. The value of $\gamma$ is then determined
by either one of the two equations in (\ref{t+- and w+- and mapping in proof II}).\\

When $a_-<a\leq\frac{1}{2}$, we also denote the left-hand sides and
right-hand sides of the two equations in (\ref{t+- and w+- and
mapping in proof}) by $L_1$, $L_2$ and $R_1$, $R_2$, respectively.
Recall that $w_\pm$ are complex conjugate numbers. Since
$\overline{f(t,w)}=f(\overline{t},\overline{w})$, from (\ref{t+- and
w+- and mapping in proof}) we have $L_1-L_2=q(a) i$, where
\begin{eqnarray*}
    q(a)&=&2\text{Im }\{b\ln(1-t_+)+(1-b)\ln t_++a\ln
    w_+\\
    &&-a\ln(w_+-1)+b\ln[1-(1-t_+)w_+]\}.
\end{eqnarray*}
Straightforward calculation gives
\begin{equation*}
\begin{split}
    q(a)=2\biggr[&2b \arctan\frac{-\sqrt{4a-4a^2-b^2}}{2-2a+b}+(1-b)\arctan\frac{b\sqrt{4a-4a^2-b^2}}{2-2a-b^2}\\
    & \left.+2a\arctan\frac{\sqrt{4a-4a^2-b^2}}{b}-a\pi \right].
\end{split}
\end{equation*}
Note that in the present case, $4a-4a^2-b^2\geq 0$. Also, we have
\begin{equation*}
    q'(a)=-2\pi+4\arctan\frac{\sqrt{4a-4a^2-b^2}}{b}<0.
\end{equation*}
Thus, $q(a)<q(a_-)=-2\pi a_-<0$. Now, consider the function
$q(a)+2\pi a$. Since
\begin{equation*}
    \frac{\mathrm{d}}{\mathrm{d}a}\left(q(a)+2\pi a\right)=4\arctan\frac{\sqrt{4a-4a^2-b^2}}{b}>0,
\end{equation*}
it follows that $q(a)+2\pi a>q(a_-)+2\pi a_-=0$ and hence
$q(a)>-2\pi a$. Coupling these two results, we obtain $-2\pi
a<q(a)<0$. From the right-hand sides of the two equations in
(\ref{t+- and w+- and mapping in proof}), by Lemma \ref{lem 1}(ii)
we have $R_1-R_2=r(\eta)i$, where $r(\eta)$ is a real-valued
increasing function with range $(-2a\pi,0)$. Therefore, for every
fix $a\in(a_-,\frac{1}{2}]$, there exists a unique $\eta$ such that
$r(\eta)=q(a)$ and the system of equations in (\ref{t+- and w+- and
mapping in proof}) holds. The value of $\gamma$ is again determined
by either one of the two equations in (\ref{t+- and w+- and mapping
in proof}).
\end{proof}

\begin{thm}\label{thm 2}
The mapping $w\rightarrow u$ defined in (\ref{mapping w to u}) is
one-to-one and analytic when $0\leq a\leq\frac{1}{2}$.
\end{thm}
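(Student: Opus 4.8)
The plan is to prove Theorem~\ref{thm 2} by the standard ``saddle-point transformation'' argument: the mapping $w\mapsto u$ is defined implicitly by (\ref{mapping w to u}), so I need to show that (a) it is locally analytic away from the saddle points, (b) it extends analytically across the saddle points, where the naive definition breaks down because both sides of (\ref{mapping w to u}) have branch points, and (c) the resulting single-valued branch is globally one-to-one on the steepest-descent contour $\gamma_w$ (and in a neighborhood of it). The key structural fact, already built into the construction, is that $f(t_0(w),w)$ and $\psi(u)+\gamma$ have matching critical structure: $f(t_0(w),w)$ has simple saddle points at $w=w_\pm$ (since $a_-<a$ or $a<a_-$ keeps them distinct, and they merge only at $a=a_\pm$), while $\psi(u)$ has simple saddles at $u=u_\pm$, and Theorem~\ref{thm 1} has already chosen $\eta,\gamma$ precisely so that the critical values correspond under $w_+\leftrightarrow u_-$, $w_-\leftrightarrow u_+$.

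First I would treat points away from the saddles. Set $F(w):=f(t_0(w),w)$ and $G(u):=\psi(u)+\gamma$. From (\ref{dw over du, normal}) one reads off that $dw/du$ is a nonzero finite holomorphic quantity as long as $w\neq w_\pm$ and $u\neq u_\pm$; equivalently $F'(w)\neq 0$ and $G'(u)\neq 0$ there. So by the inverse function theorem the relation $F(w)=G(u)$ defines, near any such non-critical point, a unique local analytic map with nonvanishing derivative. The only places where analyticity is in question are $w=w_\pm$ (and the corresponding $u=u_\mp$), and the endpoints $w=0,1$, $u=0,1$. At the endpoints the point of the remark following (\ref{mapping w to u}) — that the coefficients of $\ln u$ and $\ln(u-1)$ were chosen equal to $a$ so that the local expansions of $F$ at $w=0,1$ match those of $G$ at $u=0,1$ — guarantees that $u-1\sim \mathrm{const}\cdot(w-1)$ and $u\sim\mathrm{const}\cdot w$ near those points, so the branch structure cancels and $w\mapsto u$ is analytic and locally injective there as well.

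Next, the crossing of the saddle points, which I expect to be the main obstacle. Near $w=w_+$ write $F(w)-F(w_+)=\tfrac12 F''(w_+)(w-w_+)^2(1+\cdots)$ with $F''(w_+)\neq 0$, and near $u=u_-$ write $G(u)-G(u_-)=\tfrac12 G''(u_-)(u-u_-)^2(1+\cdots)$ with $G''(u_-)\neq 0$. Because Theorem~\ref{thm 1} forces $F(w_+)=G(u_-)$, the equation $F(w)=G(u)$ becomes, in the local analytic square-root coordinates $\sigma=(w-w_+)\sqrt{F''(w_+)/2}\,(1+\cdots)^{1/2}$ and $\varsigma=(u-u_-)\sqrt{G''(u_-)/2}\,(1+\cdots)^{1/2}$, simply $\sigma^2=\varsigma^2$; choosing the branch $\varsigma=\sigma$ that is consistent with the prescribed correspondence of the two contour arms (the steepest-descent directions at $w_+$ and $u_-$ were already matched in \S\ref{sec 3}), one gets $u-u_-$ as an analytic function of $w-w_+$ with nonzero derivative, whose value is exactly the $dw/du$ at the saddle given by (\ref{dw over du, saddle point}). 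The same argument applies at $(w_-,u_+)$, and in the regime $0\le a<a_-$ it applies separately on the upper and lower edges $w_\pm^{(+)},w_\pm^{(-)}$, the equivalence of the two prescriptions having been noted after (\ref{correspondence on lower edge}). The delicate point is to verify that these locally-defined analytic branches patch together into one single-valued analytic function on a full neighborhood of $\gamma_w$ — for this I would track the value of $F$ along $\gamma_w$: $F$ decreases monotonically in $\mathrm{Re}$ away from each saddle along the steepest-descent arms and has constant $\mathrm{Im}$ on each arm, so $w\mapsto F(w)$ is injective on each arm, and likewise $u\mapsto G(u)$ on $\gamma_u$; composing the inverse of the second with the first yields a globally defined injective analytic $w\mapsto u$ on $\gamma_w$, and analyticity in a neighborhood follows from the local statements above together with the Koebe-type fact that an injective holomorphic map stays injective on a slightly thickened contour.

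Finally I would assemble the pieces: local analyticity and local injectivity everywhere (non-critical points, endpoints, saddles), plus global injectivity on $\gamma_w$ obtained from the monotonicity of $\mathrm{Re}\,F$ and $\mathrm{Re}\,G$ along the steepest-descent paths, give that $w\mapsto u$ is a well-defined, single-valued, one-to-one analytic map on a neighborhood of $\gamma_w$ taking $\gamma_w$ onto $\gamma_u$, which is the assertion of the theorem. I expect the bookkeeping at the saddle points — specifically checking that the square-root branch forced by the geometry of the steepest-descent arms is the one that makes $dw/du$ equal to the positive square root in (\ref{dw over du, saddle point}) rather than its negative, uniformly across the two subcases $a_-<a\le\frac12$ and $0\le a<a_-$ — to be where the real work lies; everything else is a routine application of the implicit/inverse function theorem and the matching of local expansions that was engineered into the definitions.
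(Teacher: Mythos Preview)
Your approach is genuinely different from the paper's, and it is worth spelling out the contrast. The paper does not work locally at all: it introduces the intermediate variable $Z=f(t_0(w),w)=\psi(u)+\gamma$, partitions the (upper half of the) cut $w$-plane into regions bounded by the steepest-descent curve through $w_+$ and the branch cuts, and then tracks the image of each region's boundary first in the $Z$-plane and then in the $u$-plane. Because the boundary correspondence is seen to be one-to-one, the standard boundary-correspondence principle for conformal maps yields injectivity in the interiors; symmetry in the real axis handles the lower half-plane. The upshot is a \emph{global} statement: the paper concludes that $w\mapsto u$ is one-to-one and analytic on the whole cut $w$-plane, not just near $\gamma_w$. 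Your local strategy (inverse function theorem off the saddles, quadratic matching at the saddles, logarithmic-coefficient matching at $w=0,1$, then monotonicity of $\mathrm{Re}\,F$ along descent arms) is the standard alternative and is correct in spirit, but it targets only a neighborhood of $\gamma_w$; that weaker conclusion is probably adequate for the asymptotic analysis in \S\ref{sec 5}--\S\ref{sec 7}, but it is less than what the theorem, as proved in the paper, asserts.

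Two concrete gaps remain in your outline. First, you explicitly restrict to the cases $a_-<a\le\tfrac12$ and $0\le a<a_-$, where the saddles are simple; you never treat $a=a_-$, where $w_+=w_-$ and $u_+=u_-$ coalesce and your quadratic local model degenerates. The paper handles this case separately, by an implicit-function-theorem continuity argument in the parameter $a$. Second, your passage from ``injective on $\gamma_w$'' to ``injective on a neighborhood of $\gamma_w$'' via a ``Koebe-type fact'' is not justified as stated: the contour is not compact (it terminates at $w=0$ and, depending on the case, runs into $w=1$), so the usual compactness argument for thickening an injective curve does not apply directly and you would need to control the map uniformly near those endpoints. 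The paper's global boundary-tracking sidesteps both issues at once.
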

\begin{proof}
As in the cases of Charlier polynomials \cite{Rui} and Meixner
polynomials \cite{xiaojin}, we introduce an intermediate variable
$Z$ defined by
\begin{equation}\label{big Z in proof}
\begin{split}
    b\ln(1-t_0)+&(1-b)\ln t_0+a \ln w-a\ln(w-1)+b\ln[1-(1-t_0)w]\\
    =&Z=a\ln u-a\ln(u-1)+\eta
    u+\gamma,
    \end{split}
\end{equation}
where $t_0=t_0(w)$ is the relavant saddle point $t_0^+(w)$ given in
(\ref{saddle points for t}). To establish the properties of the
mapping $w\rightarrow u$, we again divide our discussion into two
cases: $0\leq a<a_-$ and $a_-<a\leq\frac{1}{2}$.\\

\begin{figure}[h]
\centering
  \includegraphics[scale=0.65]{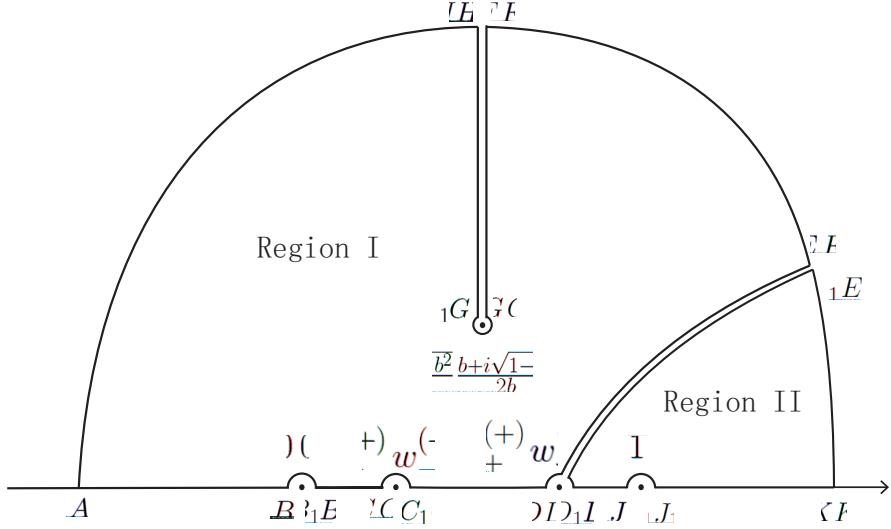}\\
  \caption{The upper half of the $w$-plane.}\label{w-proof}
\end{figure}

When $0\leq a<a_-$, we consider the upper half of the $w$-plane.
Since the functions $f(t_0(w),w)$ and $\psi(u)+\gamma$ are both
symmetric with respect to the real line, the case for the lower half
of the $w$-plane can be handled in the same manner. To avoid the
multi-valuedness around the saddle point, we divide the upper half
of the $w$-plane into two parts by using the steepest path through
$w_+^{(+)}$; see Figure \ref{w-proof}. Call these parts region I and
region II. Note that there are branch cuts, one along the infinite
interval $(-\infty,1]$ and the other along the vertical line
Re$w=\frac{1}{2}$ from $(b+i\sqrt{1-b^2})/2b$ to
$\frac{1}{2}+i\infty$. The cut along the vertical line was
introduced in (\ref{saddle points for t}).\\

As $w$ traverses along the boundary $ABB_1CC_1DD_1EFGG_1HA$ of
region I, the image point $Z$ traverses along the corresponding
boundary of a region in the $Z$-plane; see Figure \ref{z-proof-I}.
In Figure \ref{u-proof}, we draw the boundary of the region,
corresponding to region I, in the $u$-plane. The image point
$Z=\psi(u)+\gamma$ traverses along the boundary of the same region
shown in Figure \ref{z-proof-I}.

\begin{figure}[h]
\centering
  \includegraphics[scale=0.65]{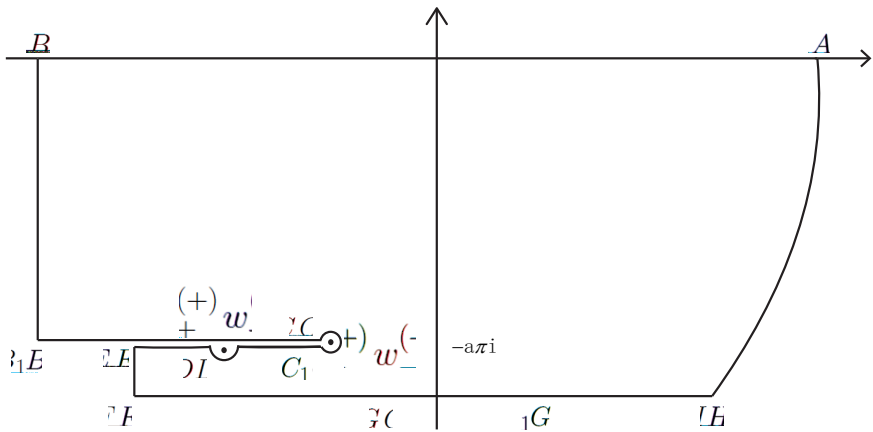}\\
  \caption{The image of Region I in the $Z$-plane.}\label{z-proof-I}
\end{figure}

\begin{figure}[h]
\centering
  \includegraphics[scale=0.65]{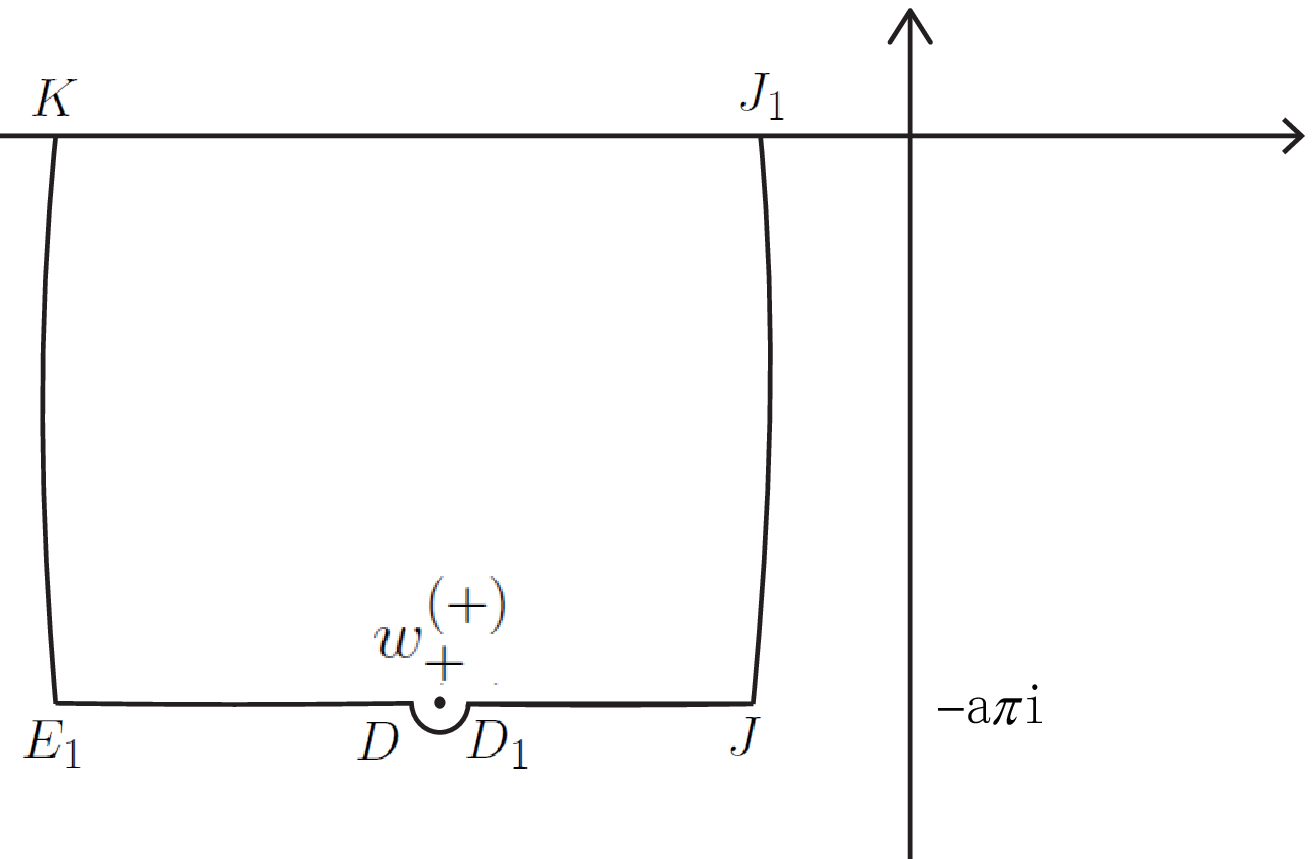}\\
  \caption{The image of Region II in the $Z$-plane.}\label{z-proof-II}
\end{figure}

\begin{figure}[h]
\centering
  \includegraphics[scale=0.65]{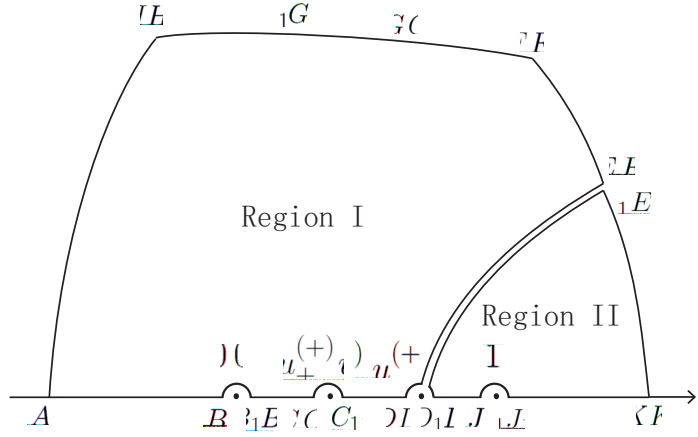}\\
  \caption{The upper half of the $u$-plane.}\label{u-proof}
\end{figure}
Let $\varphi(w)=f(t_0(w),w)$ so that (\ref{big Z in proof}) becomes
$\varphi(w)=\psi(u)+\gamma$. Since the mapping $w\rightarrow u$ is
just the composite function $\varphi^{-1}$ and $\psi(u)+\gamma$, it
is one-to-one on the boundary of region I. By the same argument, one
can prove that this mapping is also one-to-one on the boundary of
region II. For the image of region II in the intermediate $Z$-plane,
see Figure \ref{z-proof-II}. By Theorem 1.2.2 of \cite[p.12]{lecture
notes}, the mapping $w\rightarrow u$ is one-to-one in the interior
of both regions. As explained earlier, the one-to-one property of
this mapping in the lower half of the $w$-plane can be established
by using symmetry of the functions with respect to the real axis.
Note that the only possible singular points of the mapping
$w\rightarrow u$ are at $w=0$, $1$ and $w_\pm^{(+)}$. Since the
images of these points in the $u$-plane are bounded (see the points
$0$,$1$ and $u_\pm^{(+)}$ in Figure \ref{u-proof}), the mapping is
indeed one-to-one and analytic in
the whole $w$-plane.\\

When $a_-<a\leq\frac{1}{2}$, the one-to-one and analytic properties
of the mapping $w\rightarrow u$ can be proved in the same manner as
in the previous case. Hence, we simply skip this part of the argument.\\


Thus far, we have proved the one-to-one and analytic nature of the
mapping $w\rightarrow u$ defined in (\ref{mapping w to u}) when
$0\leq a<a_-$ and when $a_-<a\leq\frac{1}{2}$. To show that it also
has these properties at $a=a_-$, we note that equation (\ref{mapping
w to u}) can be written as $g(u,\eta,\gamma,f)=0$, where $g$ is just
the difference between the left-hand side and the right-hand side of
the equation. Since $\eta$, $\gamma$ and $f(t_0(w),w)$ are all
continuous in $a$ (see the proof of Theorem 1), by the implicit
function theorem $u$ is continuous in $a$ except at $u_\pm$, where
$\partial g/\partial u$ vanishes. The continuity of $u$ in $a$ when
$u=u_\pm$ comes from (\ref{saddle points of u}) and the continuity
of $\eta$. Therefore, the mapping is also one-to-one and analytic
when $a=a_-$.
\end{proof}

\section{ANALYTICITY OF $h(u,\tau)$}\label{sec 5}
Before deriving the asymptotic expansions of the integrals in
(\ref{IP for x>0 deformed}) and (\ref{IR after mapping in x<0}), we
first investigate the analyticity of the amplitude function
$h(u,\tau)$ in the neighborhood of the steepest descent surface
$S_{\tau\times u}=(-\infty,\infty)\times\gamma_u$; cf. the
statements following (\ref{two dimensional mapping}). For an
expression of this function, see (\ref{h function}) for the case
$0\leq a\leq\frac{1}{2}$, and (\ref{h function when a<0}) for the
case
$a<0$.\\

\textbf{Case 1}: $0\leq a\leq\frac{1}{2}$. \\

For $w\neq w_\pm$ and $t\neq t^+_0(w)$ (i.e., $\tau\neq 0$), we have
from (\ref{h function}), (\ref{dw over du, normal}) and (\ref{dt
over dtau})
\begin{equation}
\begin{split}
    h(u,\tau)=&\frac{u-1}{w-1}\frac{2\tau
    t(1-t)[1-(1-t)w]}{(1+b)w(t-t^+_0(w))(t-t^-_0(w))}\\
& \times
\frac{\eta(u-u_+)(u-u_-)w(1-w)\left[(2a-1)-\sqrt{1+4b^2w^2-4b^2w}\right]}{-2b^2u(u-1)(w-w_+)(w-w_-)}\label{temp
in analycity of h 1}.
\end{split}
\end{equation}
Since $\rho_t\times\gamma_w$ in the $t\times w$-plane does not pass
through $t^-_0(w)$ (unless $t^-_0(w)$ coalesces with $t^+_0(w)$, a
case which we will discuss next), we may substitute $(\ref{saddle
points for t})$ in (\ref{temp in analycity of h 1}) to give
\begin{eqnarray*}
    h(u,\tau)&=&\frac{2w\tau t(1-t)[1-(1-t)w]}{[2(1+b)wt-2w+1+\sqrt{1+4b^2(w-1)w}](t-t^+_0(w))}\\
    &&
    \times\frac{\eta(u-u_+)(u-u_-)w\left[(2a-1)-\sqrt{1+4b^2w^2-4b^2w}\right]}{b^2u(w-w_+)(w-w_-)}.
\end{eqnarray*}
Thus, in the neighborhood of the steepest descent surface
$S_{\tau\times u}$, $h(\tau,u)$ is analytic except possibly at
points $u=0$ and $u=1$, which correspond to $w=0$ and $w=1$,
respectively. From (\ref{saddle points for t}), it is easily
verified that $t^+_0(w)\rightarrow 1-b$ when $w\rightarrow 0$ and
$t^+_0(w)\rightarrow\frac{1}{1+b}$ when $w\rightarrow1$.
Substituting these values into (\ref{mapping w to u}) or (\ref{two
dimensional mapping}) yields $\frac{w}{u}=O(1)$ as $w\rightarrow
0$ and $\frac{w-1}{u-1}=O(1)$ as $w\rightarrow 1$. Hence, neither $u=0$ nor $u=1$ is the singular point in this case.\\

If $w=w_\pm$ but $t\neq t^+_0(w)$, then in (\ref{h function}) we use
(\ref{dw over du, saddle point}) instead of (\ref{dw over du,
normal}) to arrive at
\begin{eqnarray*}
  h(u,\tau) &=& \frac{u_\mp-1}{w_\pm-1}\frac{2\tau t(1-t)[1-(1-t)w_\pm]}{(1+b)w_\pm(t-t_\pm)(t-t^-_0(w_\pm))} \\
   & &
   \times\left\{\frac{\sqrt{\eta^2+4a\eta}(1-a)(1-2a)(-\eta)}{b^3\sqrt{b^2-4a+4a^2}}\right\}^{1/2}.
\end{eqnarray*}
Since we are in the case $0\leq a\leq\frac{1}{2}$, we have $w_-\neq
1$ (i.e., $u_+\neq 1$), $w_+\neq 0$ (i.e., $u_-\neq 0$) and $a\neq
\frac{1}{2}(1+\sqrt{1-b^2})$. Thus, $u_+=0$, $u_-=1$, and
$a=\frac{1-\sqrt{1-b^2}}{2}$ are the only possible singularities,
which by $(\ref{the correspondence of subcases})$ and $(\ref{the
relation between w+- and u+-})$ correspond to $w_-=0$, $w_+=1$ and
$\eta=-4a$, respectively. It can be also verified that
$\frac{w_-}{u_+}=O(1)$ as $u_+\rightarrow 0$,
$\frac{w_+-1}{u_--1}=O(1)$ as $u_-\rightarrow 1$, and
${\sqrt{\eta^2+4a\eta}}/{\sqrt{b^2-4a+4a^2}}=O(1)$
as $a\rightarrow\frac{1}{2}(1-\sqrt{1-b^2})$. Hence, these possible singularities are actually removable.\\

If $w\neq w_\pm$ but $t=t^+_0(w)$, then in (\ref{h function}) we use
the second equation (instead of the first one) in (\ref{dt over
dtau}) to reach
\begin{eqnarray*}
  h(u,\tau)&=&\left\{\frac{2t^+_0(w)(1-t^+_0(w))[1-(1-t^+_0(w))w]}{\sqrt{1+4b^2(w-1)w}}\right\}^{1/2}\\
    &&
    \times\frac{\eta(u-u_+)(u-u_-)w\left[(2a-1)-\sqrt{1+4b^2w^2-4b^2w}\right]}{2b^2u(w-w_+)(w-w_-)},
\end{eqnarray*}
where, in addition to the possible singular point at $u=0$ which we
have just discussed, we have introduced two new possible
singularities $w={(b\pm i\sqrt{1-b^2})}/{2b}$. However, from Figure
\ref{w+-} it is clear that these two points will not be in the
neighborhood of $\rho_t\times\gamma_w$, unless $a\rightarrow
\frac{1}{2}$ in which case $w\rightarrow
w_\pm$ (see the discussion below).\\

If $w=w_\pm$ and $t=t^+_0(w)$, then in (\ref{h function}) we use
(\ref{dw over du, saddle point}) and the second equation in (\ref{dt
over dtau}). Since $t^+_0(w_\pm)=t_\pm$, the result is
\begin{equation}\label{h function when a>0 in analyticity section}
\begin{split}
  h(u,\tau)=&\frac{u_\mp-1}{w_\pm-1}\left\{\frac{2t_\pm(1-t_\pm)[1-(1-t_\pm)w_\pm]}{b^3}\right.\\
  &\left. \times\frac{\sqrt{\eta^2+4a\eta}(1-a)(1-2a)(-\eta)}{\sqrt{b^2-4a+4a^2}}\right\}^{1/2}.
\end{split}
\end{equation}
Note that $a=\frac{1}{2}$ is not a singular point, and since
${\sqrt{\eta^2+4a\eta}}/{\sqrt{b^2-4a+4a^2}}=O(1)$
as $a\rightarrow\frac{1}{2}(1-\sqrt{1-b^2})$ as we have discussed before, there is no singular point in the neighborhood of the integration surface.\\

\textbf{Case 2}: $a<0$. \\

In this case, only the negative saddle point $w_-$ is relevant, and
the steepest descent path associated with $w_-$ will not pass
through $w_+$. Thus, the situation is much simpler than the case
$0\leq a\leq\frac{1}{2}$. By using (\ref{h function when a<0}),
(\ref{dt over dtau}), (\ref{dw over du when a<0}) and (\ref{dw over
du when a<0 saddle point}), we have the following analytic
expressions for
$h(u,\tau)$:\\

If $w\neq w_-$ and $t\neq t^+_0(w)$, we have
\begin{equation*}\label{}
    h(u,\tau)=\frac{(u-a)\left[(1-2a)+\sqrt{1+4b^2w^2-4b^2w}\right]}{b^2(w-w_+)(w-w_-)}\frac{\tau
    t(1-t)[1-(1-t)w]}{(1+b)(t-t^+_0(w))(t-t^-_0(w))}.
\end{equation*}
Since $w\neq w_-$ and $t\neq t^+_0(w)$, clearly there is no
singularity in the neighborhood of the integration surface.\\

If $w\neq w_-$ but $t= t^+_0(w)$, then
\begin{equation*}\label{}
    h(u,\tau)=\left\{\frac{2t^+_0(w)(1-t^+_0(w))[1-(1-t^+_0(w))w]}{\sqrt{1+4b^2(w-1)w}}\right\}^{1/2}\frac{(u-a)w\left[(1-2a)+\sqrt{1+4b^2w^2-4b^2w}\right]}{2b^2(w-w_+)(w-w_-)}.
\end{equation*}
Again, there is no singularity.\\

If $w= w_-$ (in which case $u=a$ by (\ref{relation betwen w and u
when a<0})) but $t\neq t^+_0(w)$, we have
\begin{equation*}\label{}
    h(u,\tau)=\frac{-2\tau
    t(1-t)[1-(1-t)w_-]}{(1+b)(t-t_-)(t-t_0^-(w_-))}\left\{\frac{ab(1-2a)}{(1-a)\sqrt{b^2-4a-4a^2}}\right\}^{1/2}.
\end{equation*}
The only possible singularity occurs when $w_-=0$ (i.e., when
$a\rightarrow 0^-$). Since it can be shown that $\frac{a}{w_-}=O(1)$
as $a\rightarrow 0^-$, $h(u,\tau)$ is again bounded and analytic as $a\rightarrow 0^-$.\\

If $w=w_-$ and $t=t^+_0(w)$, then
\begin{equation}\label{h_0 a<0 analycity}
\begin{split}
     h(u,\tau)=&\left\{\frac{2aw_-t_-(1-t_-)[1-(1-t_-)w_-]}{(1-w_-)b\sqrt{b^2-4a-4a^2}}\right\}^{1/2}
\end{split}
\end{equation}
and there is no possible singularity at all.\\

The conclusions in all these cases infer that for $a<0$, $h(u,\tau)$
is analytic in the neighborhood of the surface of integration.

\section{ASYMPTOTIC EXPANSIONS}\label{sec 6}
\textbf{Case 1}: $0\leq a\leq\frac{1}{2}$.\\

Let $h_0(u,\tau)\equiv h(u,\tau)$, where $h(u,\tau)$ is the
amplitude function given in (\ref{h function}). Write
\begin{equation}\label{h0}
     h_0(u,\tau) = a_0(\tau)+b_0(\tau)u-(u-u_+)(u-u_-)g_0(u,\tau).
\end{equation}
Clearly,
\begin{equation}\label{a0 and b0}
    \begin{split}
       a_0(\tau)&=\frac{u_-h_0(u_+,\tau)-u_+h_0(u_-,\tau)}{u_--u_+},\\
       b_0(\tau)&=\frac{h_0(u_-,\tau)-h_0(u_+,\tau)}{u_--u_+},
     \end{split}
\end{equation}
and
\begin{eqnarray}
  g_0(u,\tau) = \frac{1}{(u-u_+)(u-u_-)}\left[h_0(u,\tau)-a_0(\tau)-b_0(\tau)u\right].\label{g0}
\end{eqnarray}
For $u\neq u_\pm$, $g_0(u,\tau)$ is analytic in $u$ wherever
$h_0(u,\tau)$ is analytic. Furthermore, it is easily shown that the
limits
\begin{eqnarray*}
  \lim_{u\rightarrow u_+}g_0(u,\tau) &=& \frac{h'_0(u_+,\tau)}{u_+-u_-}-\frac{h_0(u_+,\tau)-h_0(u_-,\tau)}{(u_+-u_-)^2}\quad  \text{if}\quad a\neq a_-, \\
  \lim_{u\rightarrow u_-}g_0(u,\tau) &=& \frac{h'_0(u_-,\tau)}{u_--u_+}+\frac{h_0(u_+,\tau)-h_0(u_-,\tau)}{(u_+-u_-)^2}\quad  \text{if}\quad a\neq a_-, \\
  \lim_{u\rightarrow u_+}g_0(u,\tau) &=& \lim_{u\rightarrow u_-}g_0(u,\tau)=\frac{1}{2}h^{''}_0(u_-,\tau)\quad \quad \quad\quad \text{if}\quad a=
  a_-,
\end{eqnarray*}
exist and are finite. Thus, $g_0(u,\tau)$ is actually analytic in $u$ everywhere $h_0(u,\tau)$ is analytic.\\

Recall the integral representation of the confluent hypergeometric
function $\mathbf{M}(aN+1,1,\eta N)$ in (\ref{confluent
hypergeometric function for a>0})
\begin{equation}\label{M function in expansion a>0}
    \mathbf{M}(aN+1,1,\eta N)=\frac{1}{2\pi
    i}\int_{\gamma_1}e^{N\psi(u)}\frac{\mathrm{d}u}{u-1},
\end{equation}
where $\psi(u)=a\ln u-a\ln(u-1)+\eta u$ and $\gamma_1$ is the loop
contour which starts at $u=0$, encircles $u=1$ in the positive
direction, and returns to $u=0$; see (\ref{integral temp 3}). From
(\ref{m function a>0}), we also have
\begin{equation}\label{M' function in expansion a>0}
    \mathbf{M}'(aN+1,1,\eta N)=\frac{1}{2\pi
    i}\int_{\gamma_1}\frac{u}{u-1}e^{N\psi(u)}\mathrm{d}u,
\end{equation}
where the derivative is taken with respect to the third variable in
the function $\mathbf{M}(c,d,z)$. Inserting (\ref{h0}) in (\ref{IP
for x>0 deformed}), we obtain
\begin{equation}
\begin{split}
  t_n(x,N+1) = \frac{(-1)^n\Gamma(n+N+2)}{\Gamma(n+1)\Gamma(N-n+1)}e^{N\gamma}&\left[ \mathbf{M}(aN+1,1,\eta N)\int_{-\infty}^{+\infty}a_0(\tau)e^{-N\tau^2}\mathrm{d}\tau\right.\\
   & \left. +\mathbf{M}'(aN+1,1,\eta
   N)\int_{-\infty}^{+\infty}b_0(\tau)e^{-N\tau^2}\mathrm{d}\tau+\epsilon_1^+
   \right],\label{expansion of the first term}
   \end{split}
\end{equation}
where
\begin{equation*}
    \epsilon_1^+ =-\frac{1}{2 \pi i}\int_{-\infty}^{+\infty}\left[\int_{\gamma_u}(u-u_+)(u-u_-)g_0(u,\tau)e^{N
\psi(u)}\frac{\mathrm{d}u}{u-1}\right]e^{-N\tau^2}\mathrm{d}\tau.
\end{equation*}
To the inner integral, we apply an integration-by-parts; the result
is that it is equal to
\begin{equation*}
\frac{1}{\eta N}u e^{N
\psi(u)}g_0(u,\tau)\biggr|_{u=0^-}^{u=0^+}-\frac{1}{\eta
N}\int_{\gamma_u}h_1(u,\tau)e^{N \psi(u)}\frac{\mathrm{d}u}{u-1},
\end{equation*}
where
\begin{equation*}
h_1(u,\tau)=(u-1)\left[g_0(u,\tau)+u\frac{\partial
g_0(u,\tau)}{\partial u}\right].
\end{equation*}
By using (\ref{g0}) and the fact that $h_0(u)=O(1)$ as $u\rightarrow
0$ as we have shown in $\S$\ref{sec 5}, the first term above is
actually zero. Thus,
\begin{equation}\label{epsilon1}
    \epsilon_1^+= \frac{1}{2\pi i \eta N}\int_{-\infty}^{+\infty}\int_{\gamma_u}h_1(u,\tau)\frac{e^{N
\psi(u)}}{u-1}e^{-N\tau^2}\mathrm{d}u\mathrm{d}\tau.
\end{equation}
The two integrals in (\ref{expansion of the first term}) can be
asymptotically evaluated by using Watson's lemma
\cite[p.20]{R.Wong}. So, let us expand $a_0(\tau)$ and $b_0(\tau)$
into Maclaurin expansions
\begin{equation*}\label{a0 and b0 zhankai}
    a_0(\tau)=\sum_{j=0}^{\infty} a_{0,j}\tau^j,\hspace{2cm} b_0(\tau)=\sum_{j=0}^{\infty} b_{0,j}\tau^j.
\end{equation*}
Since
\begin{equation}\label{integral of tau}
    \int_{-\infty}^{+\infty}\tau^{2j}e^{-N\tau^2}\mathrm{d}\tau
      =
\frac{\Gamma(j+\frac{1}{2})}{N^{-j-\frac{1}{2}}},\hspace{2cm}\int_{-\infty}^{+\infty}\tau^{2j+1}e^{-N\tau^2}\mathrm{d}\tau
       =0,
\end{equation}
we obtain
\begin{equation}\label{}
\begin{split}
    t_n(x,N+1) = \frac{(-1)^n\Gamma(n+N+2)}{\Gamma(n+1)\Gamma(N-n+1)} e^{N\gamma}&\left\{ \mathbf{M}(a N+1,1,\eta N)  \left[\frac{c_0}{\sqrt{N}}+\delta^+_1\right] \right.\nonumber \\
   & \left. +\mathbf{M}'(a N+1,1,\eta
   N)\left[\frac{d_0}{\sqrt{N}}+\widetilde{\delta}^+_1\right]+\epsilon_1^+ \right\},
\end{split}
\end{equation}
where $\delta^+_1$ and $\widetilde{\delta}^+_1$ denote error terms
that are $O(N^{-3/2})$.\\

The above procedure can be repeated. For $l=0,1,2,\cdot\cdot\cdot$,
we define recursively
\begin{eqnarray}
  h_l(u,\tau) &=& a_l(\tau)+b_l(\tau)u-(u-u_-)(u-u_+)g_l(u,\tau),\label{h_l}\\
  h_{l+1}(u,\tau)&=&(u-1)\left[g_l(u,\tau)+u\frac{\partial
g_l(u,\tau)}{\partial u}\right],\label{hl 2}
\end{eqnarray}
and
\begin{equation}\label{expansion of coeffiencet an and bn}
       a_l(\tau)=\sum_{j=0}^{\infty} a_{l,j}\tau^j, \qquad       b_l(\tau)=\sum_{j=0}^{\infty}
       b_{l,j}\tau^j.
\end{equation}
Using induction, one can show that $h_l(u,\tau)$ and $g_l(u,\tau)$
are both analytic and bounded as $u\rightarrow 0$. Hence,
\begin{equation}\label{result a>0}
\begin{split}
  t_n(x,N+1) = \frac{(-1)^n\Gamma(n+N+2)e^{N\gamma}}{\Gamma(n+1)\Gamma(N-n+1)}&\left\{\mathbf{M}(a N+1,1,\eta N)\left[\sum_{l=0}^{p-1}\frac{c_l}{N^{l+\frac{1}{2}}}+\delta^+_p\right]\right.\\
   & \left. +\mathbf{M}'(a N+1,1,\eta N)\left[\sum_{l=0}^{p-1}\frac{d_l}{N^{l+\frac{1}{2}}}+\widetilde{\delta}_p^+\right]+\epsilon_p^+\right\},
\end{split}
\end{equation}
where
\begin{eqnarray}
  c_l &=& \sum_{m=0}^la_{l-m,2m}\Gamma(\frac{2m+1}{2}) \label{c_l}\label{cl when a>0},\\
  d_l& =& \sum_{m=0}^lb_{l-m,2m}\Gamma(\frac{2m+1}{2}) \label{d_l}\label{d_l when a>0},\\
  \epsilon_p^+&=&\frac{1}{2\pi i \eta N}\frac{1}{N^p}\int_{-\infty}^{+\infty}\int_{\gamma_u}h_p(u,\tau)\frac{e^{N
\psi(u)}}{u-1}e^{-N\tau^2}\mathrm{d}u\mathrm{d}\tau, \label{epsilonp
in bigger than 0}
\end{eqnarray}
and $\delta_p^+$, $\widetilde{\delta}_p^+$ are error terms of order $O(N^{-p-1/2})$.\\

To prove that for fixed $b=n/N\in(0,1)$, (\ref{result a>0}) is a
uniform asymptotic expansion in $a\in[0,\frac{1}{2}]$, we need to
show that there are positive constants $A_p$ and $B_p$, independent
of $a$, such that
\begin{equation}\label{error estimate when a>0}
    |\epsilon_p^+|\leq\frac{A_p}{N^{p+\frac{1}{2}}}|\mathbf{M}(a N+1,1,\eta N)|+\frac{B_p}{N^{p+\frac{1}{2}}}|\mathbf{M}'(a N+1,1,\eta
    N)|.
\end{equation}\\

\textbf{Case 2:} $a<0$.\\

Proceeding in the same manner as in Case 1, we put
$h_0(u,\tau)\equiv h(u,\tau)$, where $h(u,\tau)$ is given by (\ref{h
function when a<0}), and write
\begin{equation}
    h_0(u,\tau)=a_0(\tau)+(u-a)g_0(u,\tau).
\end{equation}
Since $\lim\limits_{u\rightarrow a}g_0(u,\tau)=h_0'(a,\tau)$,
$g_0(u,\tau)$ is analytic in $u$ wherever $h_0(u,\tau)$ is. By
(\ref{IR after mapping in x<0}) and (\ref{gamma function in use}),
we have
\begin{equation}\label{expansion of leading term for x<0}
  t_n(x,N+1) = \frac{(-1)^n\Gamma(n+N+2)N^{-a N}e^{N\gamma}}{\Gamma(n+1)\Gamma(N-n+1)\Gamma(-aN+1)}\left[\int_{-\infty}^{+\infty}a_0(\tau)e^{-N\tau^2}\mathrm{d}\tau+\epsilon_1^-
  \right],
\end{equation}
where
\begin{equation*}
    \epsilon_1^- =\frac{N^{aN}\Gamma(-a
  N+1)}{2\pi i}\int_{-\infty}^{+\infty}\left[\int_{+\infty}^{(0+)}(u-a)g_0(u,\tau)e^{N
\widetilde{\psi}(u)}\frac{\mathrm{d}u}{u}\right]e^{-N\tau^2}\mathrm{d}\tau
\end{equation*}
and $\widetilde{\psi}(u)$ is given in (\ref{psi function when a<0}).
To the inner integral, we apply an integration-by-parts. The result
is that it is equal to
\begin{equation*}
 -\frac{1}{N}g_0(u,\tau)e^{N
\widetilde{\psi}(u)}\biggr|^{u=\infty e^{2\pi
i}}_{u=\infty}+\frac{1}{N}\int_{+\infty}^{(0^+)}h_1(u,\tau)e^{N
\widetilde{\psi}(u)}\frac{\mathrm{d}u}{u},
\end{equation*}
where
\begin{equation*}
    h_1(u,\tau)=u\frac{\partial g_0(u,\tau)}{\partial u}.
\end{equation*}
Since $\widetilde{\psi}(u)\sim -u$ as $u\rightarrow +\infty$, the
first term vanishes and we have
\begin{equation*}\label{epsilon1 in case x<0}
    \epsilon_1^-=\frac{N^{aN}\Gamma(-a
  N+1)}{2\pi i N}\int_{-\infty}^{+\infty}\left[\int_{+\infty}^{(0^+)}h_1(u,\tau)e^{N
\widetilde{\psi}(u)}\frac{\mathrm{d}u}{u}\right]e^{-N\tau^2}\mathrm{d}\tau.
\end{equation*}
The Laplace integral in (\ref{expansion of leading term for x<0})
can again be asymptotically evaluated by using Watson's lemma, and
the result is
\begin{equation}\label{expansion of first term when a<0}
      t_n(x,N+1)=\frac{(-1)^n\Gamma(n+N+2)N^{-a N}e^{N\gamma}}{\Gamma(n+1)\Gamma(N-n+1)\Gamma(-a
  N+1)}\left[\frac{a_{0,0}}{\sqrt{N}}\Gamma(\frac{1}{2})+\epsilon_1^-+\delta^-_1\right],
\end{equation}
where $a_{0,0}$ is the constant term in the expansion of $a_0(\tau)$
and $\delta_1^-$ denotes the error term resulting from Watson's
lemma; see (\ref{integral of tau}).\\

The process can be repeated. For $l=0,1,2,\cdot\cdot\cdot$, we
define
\begin{subequations}
\begin{align}
    h_l(u,\tau) &= a_l(\tau)+(u-a)g_l(u,\tau),\label{6.21 a}\\
h_{l+1}(u,\tau) &= u\frac{\partial g_{l}(u,\tau)}{\partial u}, \label{hl when a<0}\\
  a_l(\tau)&=\sum_{j=0}^{+\infty} a_{l,j}\tau^j.\label{al zhankai in
  a<0}
  \end{align}
\end{subequations}
As before, we arrive at
\begin{equation}\label{result a<0}
      t_n(x,N+1)=\frac{(-1)^n\Gamma(n+N+2)N^{-a N}e^{N\gamma}}{\Gamma(n+1)\Gamma(N-n+1)\Gamma(-a
  N+1)}\left[\sum_{l=0}^{p-1}\frac{c_l}{N^{l+\frac{1}{2}}}+\epsilon_p^-+\delta^-_p\right],
\end{equation}
where
\begin{equation}\label{cl when a<0}
    c_l = \sum_{m=0}^la_{l-m,2m}\Gamma(\frac{2m+1}{2})
\end{equation}
and
\begin{equation}\label{epsilon p}
    \varepsilon_p^- =\frac{N^{aN}\Gamma(-a
  N+1)}{2\pi i N^p}\int_{-\infty}^{+\infty}\left[\int_{+\infty}^{(0^+)}h_p(u,\tau)e^{N
\widetilde{\psi}(u)}\frac{\mathrm{d}u}{u}\right]e^{-N\tau^2}\mathrm{d}\tau.
\end{equation}
The error term $\delta_p^-$ comes from applying Watson's lemma to
the functions $a_l(\tau)$, $0\leq l\leq p-1$, and is of order
$O(N^{-p-\frac{1}{2}})$. Thus, to prove that expansion (\ref{result
a<0}) is uniform with respect to $a$, we only need to show that
there exists a constant $M_p$, independent of $a$, such that
\begin{equation}\label{error estimate in x<0}
    |\epsilon_p^-|\leq\frac{M_p}{N^{p+\frac{1}{2}}}.
\end{equation}

\section{ERROR ESTIMATES}\label{sec 7}
To estimate the error terms $\epsilon_p^+$ and $\epsilon_p^-$, we
shall actually provide two slightly stronger versions of the results
stated in (\ref{error estimate when a>0}) and (\ref{error estimate
in x<0}). Indeed, we shall show that the constants $A_p$ and $B_p$
in (\ref{error estimate when a>0}) are independent of
$b\in[\delta,1-\delta]$ and $a\in[0,\frac{1}{2}]$ for any
$\delta>0$, and similarly that the constant $M_p$ in (\ref{error
estimate in x<0}) is independent of $b\in[\delta,1-\delta]$ and
$a\in(-\infty, 0)$. That is, the expansion in (\ref{result a>0}) is
uniform in $b\in[\delta,1-\delta]$ and $a\in[0,\frac{1}{2}]$, and
the expansion in (\ref{result a<0}) is uniform in
$b\in[\delta,1-\delta]$ and
$a\in(-\infty,0)$.\\

To proceed, we will divide our discussion into five cases:
\begin{enumerate}
  \item[(i)] $a_-<a\leq \frac{1}{2}$ and $N^{\frac{2}{3}}(\eta+4a)$ is
  unbounded;
  \item[(ii)] $N^{\frac{2}{3}}(\eta+4a)$ is bounded;
  \item[(iii)] $0\leq a<a_-$, $N^{\frac{2}{3}}(\eta+4a)$ is unbounded
  and $aN$ is unbounded;
  \item[(iv)] $a>0$ and $aN$ is bounded;
  \item[(v)] $a<0$.
\end{enumerate}
From (\ref{mapping w to u}), we note that $a=a_-$ corresponds
$\eta=-4a$.\\

\textbf{Case (i)} First we recall the following result of Fedoryuk
\cite[p.124-125]{Fedoryuk}; consider the integral
\begin{equation*}
    I(\lambda)=\int_\gamma f(z)e^{\lambda S(z)}\mathrm{d}z, \qquad
    z=(z_1,\cdot\cdot\cdot,z_n)\in\mathbb{C}^n,
\end{equation*}
where $\lambda>0$,
$\mathrm{d}z=\mathrm{d}z_1\cdot\cdot\cdot\mathrm{d}z_n$ and $\gamma$
is a smooth manifold of (real) dimension $n$. Assume that the
function $f(z)$ and $S(z)$ are analytic in some domain $D$
containing $\gamma$. A point $z^0$ is called a \emph{saddle point}
of $S(z)$ if $\triangledown S(z^0)=0$, and a \emph{simple} saddle
point if $\det S''(z^0)\neq 0$.

\begin{thm}\label{thm 3}
Let $\max\limits_{z\in\gamma}\text{Re } S(z)$ be attained only at a
point $z^0$, which is a simple saddle point and an interior point of
$\gamma$. Then, as $\lambda\rightarrow +\infty$, we have
\begin{equation*}
    I(\lambda)=\left(\frac{2\pi}{\lambda}\right)^{n/2}\left[\det S''(z^0)\right]^{-1/2}e^{\lambda
    S(z^0)}\left[f(z^0)+O\left(\frac{1}{\lambda}\right)\right].
\end{equation*}
The choice of the branch for the root depends on the orientation of
the integration surface.
 \end{thm}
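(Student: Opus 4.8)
The plan is to run the standard $n$-dimensional steepest-descent argument. First I would localize: since $\text{Re }S$ attains its maximum on $\gamma$ only at the interior point $z^0$, for every small enough closed ball $\overline{B}=\overline{B}(z^0,\rho)\subset D$ there is a $\kappa=\kappa(\rho)>0$ with $\text{Re }S(z)\le\text{Re }S(z^0)-\kappa$ on $\gamma\setminus B$; since $f$ and $S$ are bounded there,
\[
\int_{\gamma\setminus B}f(z)e^{\lambda S(z)}\,\mathrm{d}z=O\!\left(e^{\lambda(\text{Re }S(z^0)-\kappa)}\right),
\]
which is exponentially negligible against the claimed main term $\lambda^{-n/2}e^{\lambda S(z^0)}$. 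Thus only $\int_{\gamma\cap B}f\,e^{\lambda S}\,\mathrm{d}z$ matters.

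Because $\triangledown S(z^0)=0$ and $\det S''(z^0)\ne0$, the holomorphic Morse lemma supplies, after shrinking $\rho$, a biholomorphism $z\mapsto\zeta$ of $B$ onto a neighbourhood of the origin with $\zeta(z^0)=0$ and $S(z)=S(z^0)-\tfrac12(\zeta_1^2+\cdots+\zeta_n^2)$; the Jacobian $\partial z/\partial\zeta$ at $0$ produces exactly the factor $[\det S''(z^0)]^{-1/2}$, with the branch pinned by the orientation of $\gamma$. In $\zeta$-coordinates $\gamma\cap B$ is a real $n$-chain $\Sigma$ through $0$, and — using once more that $\text{Re }S$ has a \emph{strict} maximum there — $\Sigma$ is homotopic, rel its boundary and inside the set where $\text{Re}(\zeta_1^2+\cdots+\zeta_n^2)$ stays bounded below, to the Gaussian surface $\mathbb{R}^n$. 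By the Cauchy--Poincar\'e theorem applied to the closed holomorphic form $f\,e^{\lambda S}\,\mathrm{d}z$, the integral is invariant under this deformation, modulo an exponentially small boundary term from $|\zeta|=\rho$.

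It then remains to evaluate $e^{\lambda S(z^0)}\int_{\mathbb{R}^n}F(\zeta)\,e^{-\frac{\lambda}{2}|\zeta|^2}\,\mathrm{d}\zeta$, where $F(\zeta)=f(z(\zeta))\det(\partial z/\partial\zeta)$ is analytic at $0$ with $F(0)=f(z^0)[\det S''(z^0)]^{-1/2}$. Taylor-expanding $F$ about $0$, the odd monomials integrate to zero against the Gaussian weight, $\int_{\mathbb{R}^n}e^{-\frac{\lambda}{2}|\zeta|^2}\,\mathrm{d}\zeta=(2\pi/\lambda)^{n/2}$ yields the leading term $(2\pi/\lambda)^{n/2}[\det S''(z^0)]^{-1/2}e^{\lambda S(z^0)}f(z^0)$, the degree-two terms contribute an $O(\lambda^{-1})$ relative correction, and a crude bound on (Taylor remainder)$\times$(Gaussian) confirms the error is $O(1/\lambda)$. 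The branch of $[\det S''(z^0)]^{-1/2}$ is the one making $\Sigma$ and $\mathbb{R}^n$ compatibly oriented, which is the ``choice of branch depends on the orientation'' clause.

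The main obstacle is the deformation step in $\mathbb{C}^n$: in one variable it is routine, but in several variables one must check that, after the Morse change of coordinates, the local slice of $\gamma$ can actually be pushed onto $\mathbb{R}^n$ while $\text{Re}(\zeta_1^2+\cdots+\zeta_n^2)$ stays controlled along the way, so that all intermediate integrals remain bounded — and this is exactly where the \emph{global} hypothesis that $\text{Re }S$ has a unique strict maximum on all of $\gamma$ is used. One may instead follow Fedoryuk's own treatment, which replaces the explicit homotopy by a direct change of variables on $\gamma$ together with a deformation supported in $B$; in either case, everything past this localization-and-deformation step is the routine several-variable Watson's-lemma bookkeeping.
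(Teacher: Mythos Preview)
The paper does not prove this theorem at all: it is quoted verbatim as a known result of Fedoryuk \cite[p.~124--125]{Fedoryuk} and then applied. So there is no ``paper's own proof'' to compare against.

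That said, your outline is the standard route to such a result and is essentially what Fedoryuk does: localize using the strict-maximum hypothesis, apply the holomorphic Morse lemma to straighten $S$ near $z^0$, deform the local piece of $\gamma$ onto the real $n$-plane, and finish with the Gaussian integral. You have correctly flagged the only genuinely delicate point, namely the justification of the deformation in $\mathbb{C}^n$; everything else is bookkeeping. For the purposes of this paper your sketch is more than adequate, since the theorem is being used as a black box.
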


In our present case, $n=2$ and there are two simple saddle points
$(u_+,0)$ and $(u_-,0)$. By Theorem \ref{thm 3}, it follows from
(\ref{epsilonp in bigger than 0}) that as $N\rightarrow \infty$
\begin{equation*}
\begin{split}
  \epsilon_p^+ \sim  -\frac{(\eta+4a)^{-1/4}e^{N\eta/2}}{\sqrt{2}i(-\eta)^{5/4} N^{p+2}}\biggr[&h_p(u_+,0)e^{i(aN\pi-2aN\varphi_-+\frac{1}{2} N\sqrt{-\eta^2-4a\eta}+\frac{3\pi}{4}-\varphi_-)} \nonumber\\
   & \left.+h_p(u_-,0)e^{-i(aN\pi-2aN\varphi_-+\frac{1}{2} N\sqrt{-\eta^2-4a\eta}+\frac{3\pi}{4}-\varphi_-)}\right],\label{error in case
   1}
   \end{split}
\end{equation*}
where $\varphi_-$ is the argument of $u_-$ in $(-\pi,\pi]$. The
asymptotic formulas of $\mathbf{M}(aN+1,1,\eta N)$ and
$\mathbf{M}'(aN+1,1,\eta N)$ can be obtained in the same manner with
$n=1$ and $h_p(u,\tau)$ replaced by $1$. Thus,
\begin{equation*}
\begin{split}
  \mathbf{M}(aN+1,1,\eta N) \sim \frac{(\eta+4a)^{-1/4}e^{N\eta/2}}{\sqrt{2\pi N}i(-\eta)^{1/4}}\biggr[&e^{i(aN\pi-2aN\varphi_-+\frac{1}{2} N\sqrt{-\eta^2-4a\eta}+\frac{3\pi}{4}-\varphi_-)}\\
   & \left.+e^{-i(aN\pi-2aN\varphi_-+\frac{1}{2} N\sqrt{-\eta^2-4a\eta}+\frac{3\pi}{4}-\varphi_-)}\right],\label{M in case 1}
\end{split}
\end{equation*}

\begin{equation*}
    \begin{split}
  \mathbf{M}'(aN+1,1,\eta N) \sim \frac{(\eta+4a)^{-1/4}e^{N\eta/2}\sqrt{a}}{\sqrt{2\pi N}i(-\eta)^{3/4}}\biggr[&e^{i(aN\pi-2aN\varphi_-+\frac{1}{2} N\sqrt{-\eta^2-4a\eta}+\frac{3\pi}{4}-2\varphi_-)} \nonumber\\
   & \left.+e^{-i(aN\pi-2aN\varphi_-+\frac{1}{2} N\sqrt{-\eta^2-4a\eta}+\frac{3\pi}{4}-2\varphi_-)}\right].\label{M' in case
   1}
\end{split}
\end{equation*}
A combination of the last three equations gives the error estimate
in (\ref{error estimate when a>0}).\\

\textbf{Case (ii)} Here, $N^{2/3}(\eta+4a)$ is bounded. Hence,
$\eta\rightarrow -4a$ and, correspondingly, $a\rightarrow a_-$; see
the statement preceding (\ref{mapping w to u}). For $a<a_-$, the
contour $\gamma_u$ in (\ref{epsilonp in bigger than 0}) is shown in
Figure \ref{u-sdp2}; for $a>a_-$, the contour $\gamma_u$ is depicted
in Figure \ref{u-sdp1}. We divide $\gamma_u$ into two parts, so that
the error term $\epsilon_p^+$ in (\ref{epsilonp in bigger than 0})
can be written correspondingly as $\epsilon_p^+\equiv
\epsilon_{p,1}^++\epsilon_{p,2}^+$. We call the part of $\gamma_u$
in the upper half of the plane $\gamma_u^{(1)}$, and the part of
$\gamma_u$ in the lower half of the plane $\gamma_u^{(2)}$. Because
of the cut in the interval $(0,1)$, the phase function $\psi(u)$ of
the $u$-integral in (\ref{epsilonp in bigger than 0}) can be written
as
\begin{equation*}
    \psi(u)=a\ln u-a\ln (1-u)+\eta u-ia\pi\equiv
    \widehat{\psi}(u)-ia\pi
\end{equation*}
for $u\in \gamma_u^{(1)}$, and
\begin{equation*}
    \psi(u)=\widehat{\psi}(u)+ia\pi
\end{equation*}
for $u\in \gamma_u^{(2)}$. Since $\widehat{\psi}(u)$ has two
coalescing saddle points $u_\pm$ given in (\ref{saddle points of
u}), we make the standard transformation
\begin{equation}\label{airy transformation in case 2}
    \widehat{\psi}(u)=\frac{1}{3}v^3-\zeta v+A
\end{equation}
with the correspondence of the saddle points $u_\pm$ and
$\pm\sqrt{\zeta}$ given by
\begin{equation}\label{airy transformation mapping of saddle points}
    u_-\leftrightarrow \sqrt{\zeta},\qquad \quad u_+\leftrightarrow-\sqrt{\zeta}.
\end{equation}
If $\zeta>0$, then $\pm\sqrt{\zeta}$ are both real; if $\zeta<0$,
then $\pm\sqrt{\zeta}$ are complex conjugates and purely imaginary.
The values of $A$ and $\zeta$ can be obtained by using (\ref{airy
transformation in case 2}) and (\ref{airy transformation mapping of
saddle points}). The images of the contour $\gamma_2^{(2)}$ under
the mapping $u\rightarrow v$ defined in (\ref{airy transformation in
case 2}) are depicted in Figures \ref{SDP of airy 1} and \ref{SDP of
airy 2}. The part $\epsilon^+_{p,1}$ of the error term
$\epsilon_p^+$ now becomes
\begin{equation*}
    \epsilon^+_{p,1}=\frac{e^{-ia\pi N+\frac{1}{2}\eta N}}{2\pi i\eta
    N^{p+1}}\int_{-\infty}^{\infty}\int_{C_2}g(v,\tau)e^{N(\frac{1}{3}v^3-\zeta
    v)}e^{-N\tau^2}\mathrm{d}v\mathrm{d}\tau,
\end{equation*}
where
\begin{equation}\label{g function in case 2 error estimate}
    g(v,\tau)=h_p(u,\tau)\frac{1}{u-1}\frac{\mathrm{d}u}{\mathrm{d}v}.
\end{equation}
\vspace{1.3cm}

\begin{figure}[!h]
\centering
\begin{minipage}[t]{0.4\textwidth}
\centering
  \includegraphics[width=100pt,bb=110 0 330 170]{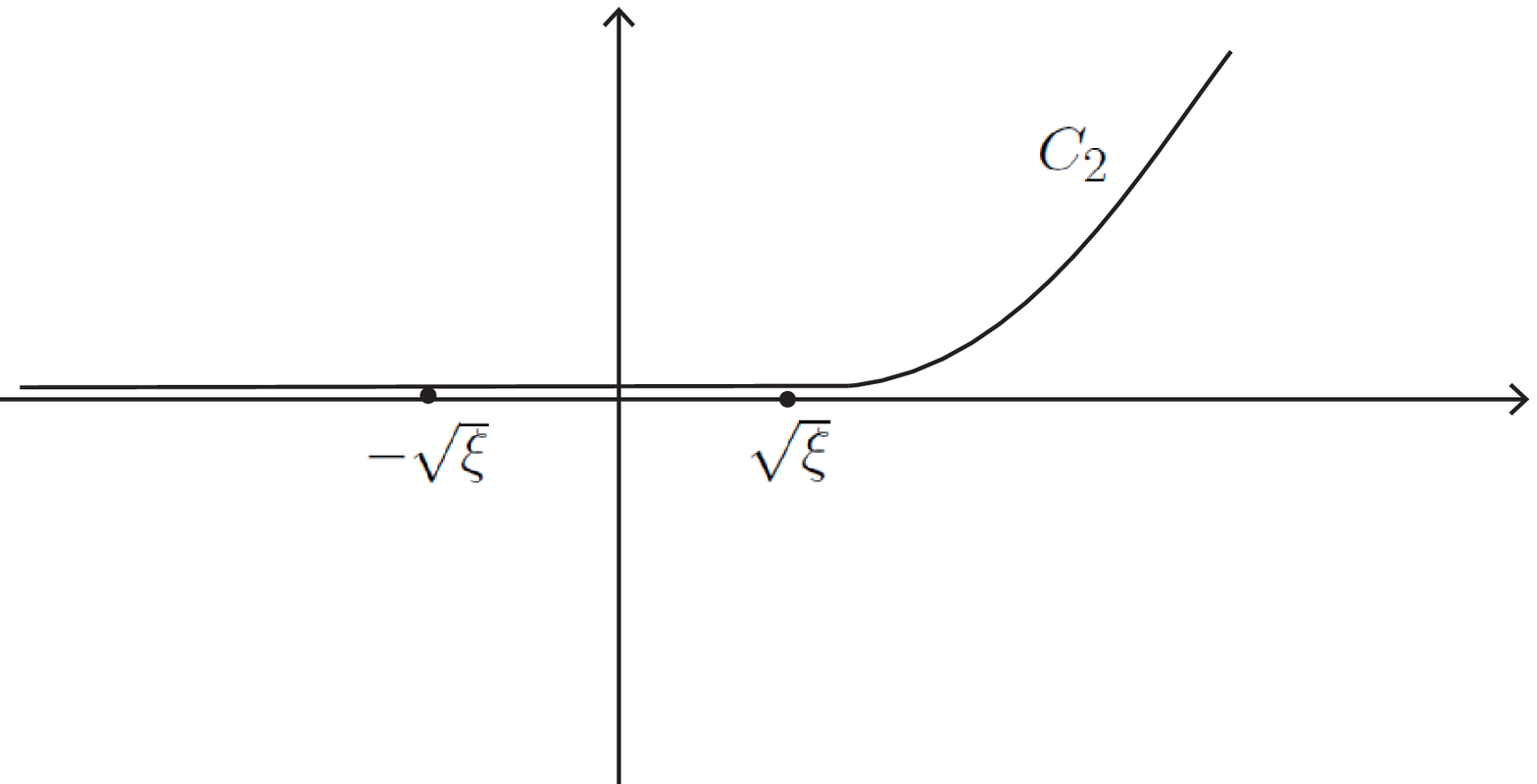}\\
  \caption{Contour $C_2$ $(a<a_-)$.}\label{SDP of airy 1}
\end{minipage}\hfill
\begin{minipage}[t]{0.45\textwidth}
\centering
  \includegraphics[width=100pt,bb=110 0 330 170]{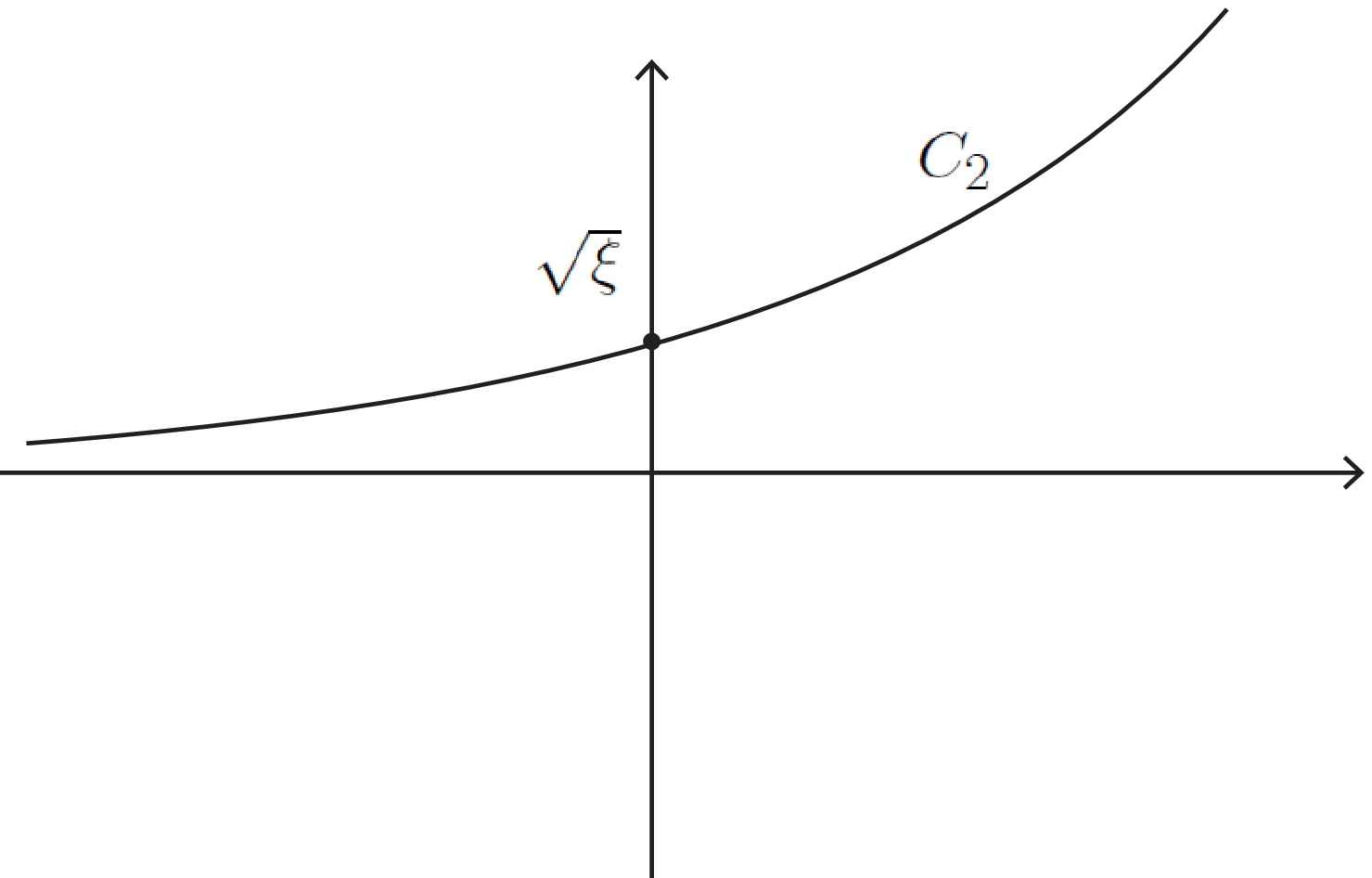}\\
  \caption{Contour $C_2$ $(a>a_-)$.}\label{SDP of airy 2}
\end{minipage}\hfill
\end{figure}

The part $\epsilon^+_{p,2}$ can be handled in exactly the same
manner, and we have
\begin{equation*}
    \epsilon^+_{p,2}=\frac{e^{ia\pi N+\frac{1}{2}\eta N}}{2\pi i\eta
    N^{p+1}}\int_{-\infty}^{\infty}\int_{C_3}g(v,\tau)e^{N(\frac{1}{3}v^3-\zeta
    v)}e^{-N\tau^2}\mathrm{d}v\mathrm{d}\tau,
\end{equation*}
where the contour $C_3$ is the reflection of $C_2$ in the $v$-plane
with respect to the real axis. Since the two contours $C_2$ and
$C_3$ together are equivalent to a contour running from $\infty
e^{-i\pi/3}$ to $\infty e^{i\pi/3}$ in the right half-plane, in
terms of Airy integral Ai$(\cdot)$ we have by the Bleistein method
\cite[p.366-370]{R.Wong}
\begin{equation}
\begin{split}
    \epsilon^+_p\sim& \text{Re }\left\{\frac{e^{-ia\pi N+\frac{1}{2}\eta N}}{\sqrt{\pi} i\eta
    N^{p+3/2}}\left[p_0e^{\frac{2}{3}\pi i}N^{-\frac{1}{3}}\text{Ai}\left(N^{\frac{2}{3}}\zeta e^{\frac{2}{3}\pi i}\right)\right.\right.\label{epsilon p in case 2 error est.}\\
    &\hspace{3.5cm}\left.-q_0e^{\frac{4}{3}\pi
    i}N^{-\frac{2}{3}}\text{Ai}'\left(N^{\frac{2}{3}}\zeta e^{\frac{2}{3}\pi
    i}\right)\right]\Biggr\},
    \end{split}
\end{equation}
where
\begin{eqnarray}
  p_0 &=& \frac{1}{2}\left[g(\sqrt{\zeta},0)+g(-\sqrt{\zeta},0)\right],\label{p0 in case 2 error est.} \\
  q_0 &=&
  \frac{1}{2\sqrt{\zeta}}\left[g(\sqrt{\zeta},0)-g(-\sqrt{\zeta},0)\right].\label{q0 in case 2 error
  est.}
\end{eqnarray}
For a detailed discussion of uniform asymptotic evaluation of
integrals, see \cite[Chap.7]{R.Wong}.\\

Following the same argument, one can derive the asymptotic
approximations of the confluent hypergeometric functions
$\mathbf{M}(aN+1,1,\eta N)$ and $\mathbf{M}'(aN+1,1,\eta N)$.
Indeed, we have
\begin{equation}
\begin{split}
    \mathbf{M}(aN+1,1,\eta N)\sim &  \text{Re }\left\{\frac{e^{-ia\pi N+\frac{1}{2}\eta N}}{\pi i} \left[p^*_0e^{\frac{2}{3}\pi i}N^{-\frac{1}{3}}\text{Ai}\left(N^{\frac{2}{3}}\zeta e^{\frac{2}{3}\pi i}\right)\right.\right.\label{M function in case 2 error est.}\\
    &\hspace{3.2cm}\left.-q^*_0e^{\frac{4}{3}\pi
    i}N^{-\frac{2}{3}}\text{Ai}'\left(N^{\frac{2}{3}}\zeta e^{\frac{2}{3}\pi
    i}\right)\right]\Biggr\},
    \end{split}
\end{equation}
where $\zeta$ is again determined by (\ref{airy transformation in
case 2}) and (\ref{airy transformation mapping of saddle points}),
and is the same as that appearing in (\ref{epsilon p in case 2 error
est.}). The coefficients $p_0^*$ and $q_0^*$ are similar to those
given in (\ref{p0 in case 2 error est.}) and (\ref{q0 in case 2
error est.}), except that the function $h_p(u,\tau)$ in (\ref{g
function in case 2 error estimate}) is replaced by $1$. The
asymptotic formula of $\mathbf{M}'(aN+1,1,\eta N)$ is the same as
(\ref{M function in case 2 error est.}), only with $h_p(u,\tau)$ in
(\ref{g function in case 2 error estimate}) now replaced by $u$.\\

A comparison of (\ref{epsilon p in case 2 error est.}) with (\ref{M
function in case 2 error est.}) and a corresponding formula for
$\mathbf{M}'(aN+1,1,\eta N)$ again establishes (\ref{error estimate when a>0}).\\

\textbf{Case (iii)} In the present case, the contour $\gamma_u$ in
(\ref{epsilonp in bigger than 0}) is shown in Figure \ref{u-sdp2}.
As in case (ii), we divide $\gamma_u$ into two parts, and label the
parts above and below the real axis by $\gamma_u^{(1)}$ and
$\gamma_u^{(2)}$, respectively. Since $\gamma_u^{(1)}$ and
$\gamma_u^{(2)}$ are symmetric with repect to the real axis and
$h_p(\overline{u},\overline{\tau})=\overline{h_p(u,\tau)}$, the
error term $\epsilon_p^+$ can be written as
\begin{equation*}
    \epsilon_p^+=\frac{1}{\pi \eta
    N^{p+1}}\text{Im }\left(\int_{-\infty}^{\infty}\int_{\gamma_u^{(2)}}\frac{h_p(u,\tau)}{u-1}e^{N\psi(u)}e^{-N\tau^2}\mathrm{d}u\mathrm{d}v\right).
\end{equation*}
In view of the equation preceding (\ref{airy transformation in case
2}), we can further write
\begin{equation}
\begin{split}
    \epsilon_p^+=\frac{1}{\pi \eta
    N^{p+1}}\biggr\{&\cos(a\pi
    N)\text{Im }\left(\int_{-\infty}^{\infty}\int_{\gamma_u^{(2)}}\frac{h_p(u,\tau)}{u-1}e^{N\widehat{\psi}(u)}e^{-N\tau^2}\mathrm{d}u\mathrm{d}v\right)\\
    &\left.+\sin(a\pi
    N)\text{Re }\left(\int_{-\infty}^{\infty}\int_{\gamma_u^{(2)}}\frac{h_p(u,\tau)}{u-1}e^{N\widehat{\psi}(u)}e^{-N\tau^2}\mathrm{d}u\mathrm{d}v\right)\right\},\label{case 3 separate into two parts}
\end{split}
\end{equation}
where $\widehat{\psi}(u)=a\ln u-a\ln(1-u)+\eta u$.\\

Now observe that since $N^{\frac{2}{3}}(\eta+4a)$ is unbounded, the
rate in which $\eta\rightarrow -4a$ is controlled; that is, the two
saddle point $u_\pm$ of the phase function $\widehat{\psi}(u)$ do
not coalesce sufficiently fast; cf. (\ref{saddle points of u}). As a
consequence, it is known that the classical method of steepest
descent continues to apply. (Note that $N\widehat{\psi}(u)=N(a\ln
u-a\ln(1-u)+\eta u)$. So, to apply this method, $aN$ must approach
infinity; i.e., $x$ is large.)\\

In (\ref{case 3 separate into two parts}), there are two terms, and
both terms contain the same double integral. Since
$\widehat{\psi}(u)$ is real for $u\in(0,1)$, to get the real part of
the asymptotic approximation of the double integral we need be
concerned with only the horizontal part of the contour
$\gamma_u^{(2)}$. Furthermore, since
$\widehat{\psi}(u_+)>\widehat{\psi}(u_-)$, the real part of the
double integral in (\ref{case 3 separate into two parts}) is
asymptotically equal to
\begin{equation}\label{real part of xxx in case 3}
    \frac{h_p(u_+,0)}{u_+-1}e^{N\widehat{\psi}(u_+)}\sqrt{\frac{2}{-\widehat{\psi}''(u_+)}}\frac{\pi}{N}.
\end{equation}
Since the integrand on the horizontal part of the contour
$\gamma_u^{(2)}$ is real, only the curved part of contour
$\gamma_u^{(2)}$ contributes to the imaginary part of the double
integral. Since the cut for $\widehat{\psi}(u)$ is now from $1$ to
$+\infty$, we can extend the contour $\gamma_u^{(2)}$ into the upper
half-plane by its mirror image, and form the integration path
$\Gamma$ which passes through $u_-$ as an interior point. The
imaginary part of the double integral in (\ref{case 3 separate into
two parts}) is asymptotically equal to
\begin{equation}\label{imaginary part of xxx in case 3}
\frac{1}{2i}\frac{h_p(u_-,0)}{u_--1}e^{N\widehat{\psi}(u_-)}\sqrt{\frac{2}{-\widehat{\psi}''(u_-)}}\frac{\pi}{N}.
\end{equation}
Inserting the contributions given in (\ref{real part of xxx in case
3}) and (\ref{imaginary part of xxx in case 3}) into equation
(\ref{case 3 separate into two parts}) gives
\begin{equation}
\begin{split}
  \epsilon_p^+ =\frac{1}{\eta
  N^{p+2}}\Biggr\{&\sin(a\pi N)\frac{h_p(u_+,0)}{u_+-1}e^{N\widehat{\psi}(u_+)}\sqrt{\frac{2}{-\widehat{\psi}''(u_+)}}\left[1+O(N^{-1})\right]\\
&\left.+\frac{\cos(a\pi
N)}{2i}\frac{h_p(u_-,0)}{u_--1}e^{N\widehat{\psi}(u_-)}\sqrt{\frac{2}{-\widehat{\psi}''(u_-)}}\left[1+O(N^{-1})\right]\right\}.\label{case
3 error est. epsilon}
\end{split}
\end{equation}
Again, since $\widehat{\psi}(u_+)>\widehat{\psi}(u_-)$, (\ref{case 3
error est. epsilon}) implies
\begin{equation}
\begin{split}
  \epsilon_p^+ =\frac{e^{N\widehat{\psi}(u_+)}}{\eta
  N^{p+2}}\Biggr\{&\sin(a\pi N)\frac{h_p(u_+,0)}{u_+-1}\sqrt{\frac{2}{-\widehat{\psi}''(u_+)}}\left[1+O(N^{-1})\right]\\
&+\text{an exponentially small term}\Biggr\}.\label{case 3 error
est. epsilon simplified}
\end{split}
\end{equation}

Asymptotic approximations for $\mathbf{M}(aN+1,1,\eta N)$ and
$\mathbf{M}'(aN+1,1,\eta N)$ can be obtained in a similar manner,
and we have
\begin{equation}
\begin{split}
  \mathbf{M}(aN+1,1,\eta N)=\frac{e^{N\widehat{\psi}(u_+)}}{\sqrt{\pi N}}\Biggr\{&\frac{\sin(a\pi N)}{u_+-1}\sqrt{\frac{2}{-\widehat{\psi}''(u_+)}}\left[1+O(N^{-1})\right]\\
&+\text{an exponentially small term}\Biggr\}\label{case 3 error est.
M function simplified}
\end{split}
\end{equation}
\begin{equation}
\begin{split}
  \mathbf{M}'(aN+1,1,\eta N)=\frac{e^{N\widehat{\psi}(u_+)}}{\sqrt{\pi N}}\Biggr\{&\sin(a\pi N)\frac{u_+}{u_+-1}\sqrt{\frac{2}{-\widehat{\psi}''(u_+)}}\left[1+O(N^{-1})\right]\\
&+\text{an exponentially small term}\Biggr\}.\label{case 3 error
est. M' function simplified}
\end{split}
\end{equation}
The estimate in (\ref{error estimate when a>0}) now follows from
(\ref{case 3 error est. epsilon simplified}), (\ref{case 3 error
est. M function simplified}) and (\ref{case 3 error est. M' function
simplified}).
\begin{rem}
To be more precise, the exponentially small term in (\ref{case 3
error est. M function simplified}) and (\ref{case 3 error est. M'
function simplified}) is\newline
$O\left(e^{N(\widehat{\psi}(u_-)-\widehat{\psi}(u_+)}\right)$.
\end{rem}

\textbf{Case (iv)} Here, we proceed as in case (iii), and reach the
error representation given in (\ref{case 3 separate into two
parts}). There are two terms in this equation; we begin with the one
involving
\begin{equation}\label{case iv real part}
    G_2\equiv \text{Re }\left(\int_{-\infty}^{\infty}\int_{\gamma_u^{(2)}}\frac{h_p(u,\tau)}{u-1}e^{N\widehat{\psi}(u)}e^{-N\tau^2}\mathrm{d}u\mathrm{d}\tau\right).
\end{equation}
As explained in the previous case, for this term we need be
concerned with only the horizontal part of the contour
$\gamma_u^{(2)}$. Since $aN$ is bounded, $a$ must tend to zero and
$\eta$ is negative; see the correspondence between $a$ and $\eta$
given in $\S$\ref{sec 3}. Thus, the saddle point $u_\pm$ behave like
\begin{eqnarray*}
  u_+ &=& \frac{1}{2}-\frac{1}{2}\sqrt{1+\frac{4a}{\eta}}=-\frac{a}{\eta}+O(a^2)\rightarrow 0, \\
  u_- &=& \frac{1}{2}+\frac{1}{2}\sqrt{1+\frac{4a}{\eta}}=1+\frac{a}{\eta}+O(a^2)\rightarrow
  1.
\end{eqnarray*}
As $u\rightarrow 0$, $\widehat{\psi}(u)\sim a\ln u+(a+\eta)u$. This
suggests that we make the transformation
\begin{equation*}
    \widehat{\psi}(u)=a\ln v+(a+\eta)v\equiv \rho(v)
\end{equation*}
with $u=u_+$ corresponding to $v=v_0=-\frac{a}{a+\eta}$. Recall that
$\eta$ is negative and $a\rightarrow 0$; thus, $v_0>0$ and
$v_0\rightarrow 0$. This says that our saddle point $v_0$ coalesces
with an endpoint. We also note that $N\widehat{\psi}(u)=Na\ln
v+N(a+\eta)v$, and that $aN$ is bounded. All these point out that
the steepest descent method does not apply. Therefore, we rewrite
(\ref{case iv real part}) as
\begin{equation}\label{}
    G_2\sim \text{Re
    }\left(\int_{-\infty}^{\infty}\int_{0}^{+\infty}g(v,\tau)v^{aN}e^{N(a+\eta)v}e^{-N\tau^2}\mathrm{d}v\mathrm{d}\tau\right),
\end{equation}
where
\begin{equation}\label{}
    g(v,\tau)=\frac{h_p(u,\tau)}{u-1}\frac{\mathrm{d}u}{\mathrm{d}v}.
\end{equation}
Since
\begin{equation*}
    \left.\frac{\mathrm{d}u}{\mathrm{d}v}\right|_{v=v_0}=\sqrt{\frac{\rho''(v_0)}{\widehat{\psi}''(u_+)}}=\left.\sqrt{\frac{u^2(u-1)^2}{v^2(1-2u)}}\right|_{u=u_+,v=v_0}\rightarrow
    1
\end{equation*}
as $a\rightarrow 0$, we obtain by Watson's lemma
\begin{equation}\label{case iv result for real part}
    G_2=-\frac{h_p(0,0)}{\left[-N(a+\eta)\right]^{aN+1}}\sqrt{\frac{\pi}{N}}\Gamma(aN+1)\left[1+O(N^{-1})\right].
\end{equation}

Next, we consider the first term in (\ref{case 3 separate into two
parts})
\begin{equation}\label{case iv imaginary part}
    G_1\equiv \text{Im }\left(\int_{-\infty}^{\infty}\int_{\gamma_u^{(2)}}\frac{h_p(u,\tau)}{u-1}e^{N\widehat{\psi}(u)}e^{-N\tau^2}\mathrm{d}u\mathrm{d}\tau\right).
\end{equation}
As in case (iii), since the integrand on the horizontal part of the
contour $\gamma_u^{(2)}$ is real, by symmetry we have
\begin{equation}\label{case iv imaginary part deformed}
    G_1=\frac{1}{2i}\left(\int_{-\infty}^{\infty}\int_{L}\frac{h_p(u,\tau)}{u-1}e^{N\widehat{\psi}(u)}e^{-N\tau^2}\mathrm{d}u\mathrm{d}\tau\right),
\end{equation}
where $L$ is the curved part of $\gamma_u^{(2)}$ plus its minor
image in the upper half-plane. Recall that there is a cut for
$\widehat{\psi}(u)$, which extends from $u=1$ to $u=+\infty$ along
the positive real line. Since
$u_-=1+\frac{a}{\eta}+\cdot\cdot\cdot\sim 1$ as $a\rightarrow 1$,
this saddle point coalesces with the branch point at $u=1$. For $u$
near $1$, we write $u=1-v$ with $v$ being small. Then,
\begin{equation*}
    \widehat{\psi}(u)\sim -a\ln v+\eta-(\eta+a)v.
\end{equation*}
This suggests the transformation
\begin{equation}\label{transformation for imaginaray part in case iv}
    \widehat{\psi}(u)= -a\ln z+\eta-(\eta+a)z\equiv
    \widehat{\rho}(z).
\end{equation}
The double integral in (\ref{case iv imaginary part deformed}) now
becomes
\begin{equation}\label{case iv imaginary part after mapping}
    G_1=\frac{1}{2i}\int_{-\infty}^{\infty}\int_{-\infty}^{(0^+)}\frac{h_p(u,\tau)}{u-1}\frac{\mathrm{d}u}{\mathrm{d}z}e^{N\widehat{\psi}(u)}e^{-N\tau^2}\mathrm{d}z\mathrm{d}\tau.
\end{equation}
Since $N\widehat{\rho}(z)=Na\ln z-N(\eta+a)z+N\eta$ and $Na$ is
bounded, the method of steepest descent does not apply. So we
rewrite (\ref{case iv imaginary part after mapping}) in the form
\begin{equation*}
    G_1=\frac{1}{2i}e^{N\eta}\int_{-\infty}^{\infty}\int_{-\infty}^{(0^+)}z^{-aN-1}g(z,\tau)e^{-N(\eta+a)z}e^{-N\tau^2}\mathrm{d}z\mathrm{d}\tau,
\end{equation*}
where
\begin{equation*}
    g(z,\tau)=\frac{z}{u-1}\frac{\mathrm{d}u}{\mathrm{d}z}h_p(u,\tau).
\end{equation*}
Recall that $N(\eta+a)$ is negative. To the inner integral, we can
use Watson's Lemma for loop integrals \cite[p.20]{R.Wong}. Since
\begin{equation*}
    \left.\frac{z}{u-1}\right|_{z=0,u=1}=-1\qquad \text{and}\qquad
    \left.\frac{\mathrm{d}u}{\mathrm{d}z}\right|_{z=0,u=1}=-1,
\end{equation*}
we have $g(0,0)=h_p(1,0)$ and
\begin{equation}\label{result for imaginary part in case iv}
    G_1=\pi h_p(1,0)\sqrt{\frac{\pi}{N}}\frac{\left[-N(a+\eta)\right]^{aN}}{\Gamma(aN+1)}e^{\eta N}\left[1+O(N^{-1})\right].
\end{equation}

Coupling (\ref{case iv result for real part}) and (\ref{result for
imaginary part in case iv}), we have from (\ref{case 3 separate into
two parts})
\begin{equation*}
\begin{split}
  \epsilon_p^+=\frac{1}{\eta N^{p+1}}\biggr\{&\cos(aN\pi)h_p(1,0)\sqrt{\frac{\pi}{N}}\frac{\left[-N(a+\eta)\right]^{aN}}{\Gamma(aN+1)}e^{\eta N}\left[1+O(N^{-1})\right]\nonumber\\
   & \left.-\sin(aN\pi)h_p(0,0)\sqrt{\frac{\pi}{N}}\frac{\Gamma(aN+1)}{\pi[-N(a+\eta)]^{aN+1}}\left[1+O(N^{-1})\right]
   \right\}
   \end{split}
\end{equation*}
which in turn gives
\begin{equation}\label{final in case
   IV}
   \begin{split}
\epsilon_p^+=\frac{1}{\eta
N^{p+1}}\biggr\{&-\frac{\sin(aN\pi)h_p(0,0)}{\pi[-N(a+\eta)]^{aN+1}}\sqrt{\frac{\pi}{N}}\Gamma(aN+1)\left[1+O(N^{-1})\right]\\
&+O(e^{N\eta})\biggr\}.
\end{split}
\end{equation}
Note that $\eta$ is negative.\\

The asymptotics of $\mathbf{M}(aN+1,1,\eta N)$ and
$\mathbf{M}'(aN+1,1,\eta N)$ can again be obtained in a similar
manner. We have
\begin{equation}\label{x fixed, M}
\begin{split}
\mathbf{M}(aN+1,1,\eta
N)=&-\frac{\sin(aN\pi)}{\pi[-N(a+\eta)]^{aN+1}}\Gamma(aN+1)\left[1+O(N^{-1})\right]\\
&+O(e^{N\eta}),
\end{split}
\end{equation}
\begin{equation}\label{x fixed, M'}
\begin{split}
\mathbf{M}'(aN+1,1,\eta
N)=&\frac{a\sin(aN\pi)}{\pi\eta[-N(a+\eta)]^{aN+1}}\Gamma(aN+1)\left[1+O(N^{-1})\right]\\
&+O(e^{N\eta}).
\end{split}
\end{equation}
The estimate (\ref{error estimate when a>0}) follows
immediately from (\ref{final in case IV}), (\ref{x fixed, M}) and (\ref{x fixed, M'}).\\

\textbf{Case (v)} We first consider the subcase in which $a$ is
bounded; that is, $a\in(-M,0)$, where $M>0$ is a constant. Since in
this case only the saddle point $u=a$ matters (see (\ref{psi
function when a<0})), by applying the steepest descent method
directly, we obtain
\begin{equation}\label{}
     \epsilon_p^-=\frac{h_p(a,0)}{N^{p+1/2}}\sqrt{\pi}\left[1+O(N^{-1})\right].
\end{equation}
Since $h_p(u,\tau)$ is analytic along the path of integration, for
bounded $a$ we can easily find a constant $M_p$ satisfying (\ref{error estimate in x<0}).\\

Next, we consider the subcase: $a\rightarrow -\infty$. Motivated by
a method of Olde Daalhuis and Temme \cite{rational}, we introduce
the rational functions
\begin{eqnarray}
  R_0(u,w,a) &=& \frac{1}{u-w} \\
  R_{n+1}(u,w,a) &=&
  \frac{-1}{u-a}\frac{\mathrm{d}}{\mathrm{d}u}\left(uR_n(u,w,a)\right),\quad
  n=0,1,2,\cdot\cdot\cdot,\label{R n rational function}
\end{eqnarray}
where $u,w\in\mathbb{C}$, $u\neq w$ and $u^2\neq a^2$. Using an
induction argument on $n$, it can be readily verified that there are
constants $C_{ij}$, independent of $u$, $w$ and $a$, such that
\begin{equation}\label{expression of R n}
    R_n(u,w,a)=w\sum_{i=0}^{n-1}\sum_{j=0}^{n-1-i}\frac{C_{ij}u^{n-1-j}}{(u-w)^{n+1-i-j}(u-a)^{n+i}},\qquad
    n=1,2,\cdot\cdot\cdot.
\end{equation}

To extend the validity of the asymptotic expansion (\ref{result
a<0}) from bounded $a$ to unbounded $a$, we first define
\begin{equation}\label{}
    \rho_0(a)=\min\{|u-a|:u\text{ is a singularity of }h_0(u)\}.
\end{equation}
Since we shall be interested in the property of $h_p(u,\tau)$ in
$u$, we may suppress the dependence on $\tau$ and simply write
$h_p(u)$ for $h_p(u,\tau)$. To obtain an estimate on $\rho_0(a)$, we
denote by $u_{\pm k}$ the singular points in the $u$-plane, which
are the images of $w_- e^{\pm2k\pi i}$ under the mapping
(\ref{mapping w to u when x<0})-(\ref{relation betwen w and u when
a<0}), where $w_-$ is the saddle point given in (\ref{saddle points
of w}). Thus, we have
\begin{eqnarray}
  \widetilde{f}(t_0(w_-),w_-)&=&a\ln (-a)-a+\gamma, \label{temp 1}\\
  \widetilde{f}(t_0(w_-
e^{\pm2k\pi i}),w_- e^{\pm2k\pi i})&=&a\ln (-u_{\pm k})-u_{\pm
k}+\gamma. \label{temp 2}
\end{eqnarray}
Subtracting (\ref{temp 1}) from (\ref{temp 2}) gives
\begin{equation}\label{equation for rho}
    \pm 2k\pi(a+b-1) i=a\ln(-u_{\pm k})-u_{\pm k}-a\ln(-a)+a.
\end{equation}
For large values of $a$, numerical calculation shows that the
nearest singularities should be $u_{\pm 1}$ which are approximately
equal to $(3.089\pm 7.461 i)a$. Hence $\rho_0(a)\sim -7.75 a$.
(Recall that $a$ is negative). [Put $u_{\pm k}=a v_{\pm k}$ in
(\ref{equation for rho}), and have
\begin{equation*}
    \pm 2k\pi\left(\frac{a+b-1}{a}\right) i=\ln(v_{\pm k})-v_{\pm k}+1.
\end{equation*}
For large $a$, the left-hand side is approximately equal to $\pm
2k\pi i$. Numerical computation of the resulting equation gives $v_{\pm 1}=3.089\pm 7.461 i$.]\\

To estimate the error term $\varepsilon_p^-$ in (\ref{epsilon p}),
we split the loop contour into two parts $\mathcal{L}_1$ and
$\mathcal{L}_2$, and write
\begin{eqnarray}
  \varepsilon_{p,1}^{-} &=& \frac{N^{aN}\Gamma(-a
  N+1)}{2\pi i N^p}\int_{-\infty}^{+\infty}\int_{\mathcal{L}_1}\frac{h_p(u,\tau)}{u}e^{N
\widetilde{\psi}(u)}e^{-N\tau^2}\mathrm{d}u\mathrm{d}\tau, \label{epsilon 1 in a<0}\\
  \varepsilon_{p,2}^{-} &=& \frac{N^{aN}\Gamma(-a
  N+1)}{2\pi i N^p}\int_{-\infty}^{+\infty}\int_{\mathcal{L}_2}\frac{h_p(u,\tau)}{u}e^{N
\widetilde{\psi}(u)}e^{-N\tau^2}\mathrm{d}u\mathrm{d}\tau,
\end{eqnarray}
where $\mathcal{L}_1=\{u\in\text{loop contour}:|u-a|\leq
c|a|^{\theta}\}$, with $0<c<1$ and $\frac{3}{4}\leq \theta\leq 1$. A
simple estimation shows that $\varepsilon_{p,2}^{-}$ is
exponentially
small in comparison with $\varepsilon_{p,1}^{-}$; see \cite[p. 318-320]{rational}. \\

Let $\Gamma_1$ be a closed contour embracing the curve
$\mathcal{L}_1$ such that as $a\rightarrow -\infty$,
\begin{equation}\label{Gamma 1}
    \text{length
    }\Gamma_1=O(|a|^{\theta})\qquad\text{and}\qquad\text{
    distance }(\Gamma_1,\mathcal{L}_1)\sim c_1|a|^{\theta}.
\end{equation}
Let $\Omega_1$ denote the domain bounded by $\Gamma_1$, and its
closure by $\overline{\Omega}_1$. Similarly, let $\Gamma_2$ be a
closed curve embracing $\mathcal{L}_1$ and lying inside $\Gamma_1$
such that as $a\rightarrow -\infty$,
\begin{equation}\label{Gamma 2}
    \text{length }\Gamma_2=O(|a|^{\theta})\qquad\text{
    and}\qquad\text{distance }(\Gamma_2,\mathcal{L}_1)\sim c_2|a|^{\theta}
\end{equation}
with $c_2<c_1$. Let $\Omega_2$ denote the domain bounded by
$\Gamma_2$, and its closure by $\overline{\Omega}_2$. Define
\begin{equation}\label{}
    \widetilde{h}_p=\sup_{w\in\overline{\Omega}_1}|h_p(w)|.
\end{equation}
It is easily verified that as $a\rightarrow -\infty$
\begin{equation}\label{sup bound of h0}
    \sup_{w\in\overline{\Omega}_1}|h_0(w)|\leq
    c_0|h_0(a)|,
\end{equation}
where $c_0$ is another constant independent of $a$. From
(\ref{expression of R n}), we also have
\begin{equation}\label{sup bound of R n}
    \sup_{u\in\Gamma_1,\text{ }w\in\overline{\Omega}_2}|R_p(u,w,a)|\leq
    C_p|a|^{-(2\theta-1)p-\theta},
\end{equation}
where $C_p$ is a constant independent of $a$. By (\ref{sup bound of
h0}), (\ref{sup bound of R n}) and (\ref{Gamma 1}), it follows that
\begin{equation}\label{temp 4}
    \left|\frac{1}{2\pi
    i}\int_{\Gamma_1}R_p(u,w,a)h_0(u)\mathrm{d}u\right|\leq
    C_p'|a|^{-(2\theta-1)p}|h_0(a)|,
\end{equation}
where $C_p'$ is also a constant. By Cauchy's integral formula and
(\ref{hl when a<0}) and (\ref{R n rational function}), we have upon
integration by parts
\begin{equation}
\begin{split}
  h_p(w) &= \frac{1}{2\pi i}\int_{\Gamma_1}R_0(u,w,a)h_p(u)\mathrm{d}u \\
   &= -\frac{1}{2\pi i}\int_{\Gamma_1}g_{p-1}(u)\mathrm{d}\left(uR_0(u,w,a)\right) \\
   &= \frac{1}{2\pi i}\int_{\Gamma_1}(u-a)R_1(u,w,a)g_{p-1}(u)\mathrm{d}u \label{temp6}\\
   &= \frac{1}{2\pi i}\int_{\Gamma_1}R_1(u,w,a)h_{p-1}(u)\mathrm{d}u-\frac{1}{2\pi i}\int_{\Gamma_1}a_{p-1}
   R_1(u,w,a)\mathrm{d}u,
   \end{split}
\end{equation}
where we have used (\ref{6.21 a}) to get
$h_{p-1}(a,\tau)=a_{p-1}(\tau)$. On account of (\ref{sup bound of R
n}), the last integral is
$\widetilde{h}_{p-1}(u)O(|a|^{-(2\theta-1)})$. Repeated application
of (\ref{temp6}) gives
\begin{equation}\label{temp 3}
    h_p(w)=\frac{1}{2\pi
    i}\int_{\Gamma_1}R_p(u,w,a)h_{0}(u)\mathrm{d}u+\widetilde{h}_{p-1}O(|a|^{-(2\theta-1)})+\cdot\cdot\cdot+\widetilde{h}_{0}O(|a|^{-(2\theta-1)p})
\end{equation}
By induction, we have from (\ref{temp 3}) and (\ref{temp 4})
\begin{equation}\label{temp 5}
    \widetilde{h}_p\leq
    \widetilde{C}_p|a|^{-(2\theta-1)p}|h_0(a)|,
\end{equation}
where $\widetilde{C}_p$ is a constant independent of $a$.
Substituting (\ref{temp 5}) in (\ref{epsilon 1 in a<0}), we obtain
\begin{equation}\label{temp 7}
    |\varepsilon_{p,1}^{-}|\leq\frac{A_p|a|^{-(2\theta-1)p}|h_0(a)|\sqrt{\pi}}{N^{p+1/2}},
\end{equation}
where $A_p$ is a constant not depending on $a$. From (\ref{h_0 a<0
analycity}), one can show that $h_0(a)=h_0(a,\tau)\sim \sqrt{-a}$ as
$a\rightarrow-\infty$. Thus, to have the right-hand side of
(\ref{temp 7}) independent of $a$, we must choose $\theta$ so that
$(2\theta-1)\geq\frac{1}{2p}\geq\frac{1}{2}$ for all $p\geq 1$; that
is, $\theta\geq\frac{3}{4}$. With this choice, we can always find a
constant $M_p$, independent of $a$, for each $p$ such that
(\ref{error estimate in x<0}) holds.\\

\section{SUMMARY OF RESULTS}\label{sec 8}

For $a<0$, we have from (\ref{result a<0}) and (\ref{error estimate
in x<0})
\begin{equation}\label{result a<0 final}
      t_n(x,N+1)\sim \frac{(-1)^n\Gamma(n+N+2)N^{-a N}e^{N\gamma}}{\Gamma(n+1)\Gamma(N-n+1)\Gamma(-a
  N+1)}\sum_{l=0}^{\infty}\frac{c_l}{N^{l+\frac{1}{2}}}
\end{equation}
as $N\rightarrow \infty$, where the coefficients $c_l$ are given in
(\ref{cl when a<0}) and the constant $\gamma$ is determined by
(\ref{mapping w to u when x<0}) and (\ref{relation betwen w and u
when a<0}).\\

For $0\leq a\leq\frac{1}{2}$, we have from (\ref{result a>0}) and
(\ref{error estimate when a>0})
\begin{equation}\label{result a>0 final}
\begin{split}
  t_n(x,N+1) \sim \frac{(-1)^n\Gamma(n+N+2)e^{N\gamma}}{\Gamma(n+1)\Gamma(N-n+1)} \Biggr[& \mathbf{M}(a N+1,1,\eta N)\sum_{l=0}^{\infty}\frac{c_l}{N^{l+\frac{1}{2}}}\\
   & \left. +\mathbf{M}'(a N+1,1,\eta N)\sum_{l=0}^{\infty}\frac{d_l}{N^{l+\frac{1}{2}}}\right]
\end{split}
\end{equation}
as $N\rightarrow \infty$, where $\mathbf{M}$ is the confluent
hypergeometric function defined in (\ref{M function in expansion
a>0}), and $\mathbf{M}'$ is its derivative taken with respect to the
third variable. (Note that the second variable in $\mathbf{M}$ is a
constant.) The coefficients $c_l$ and $d_l$ in (\ref{result a>0
final}) are given in (\ref{cl when a>0}) and (\ref{d_l when a>0}),
respectively. The constants $\eta$ and $\gamma$ are determined by
(\ref{mapping w
to u}) and (\ref{the relation between w+- and u+-}).\\

For $a>\frac{1}{2}$, the asymptotic expansion of $t_n(x,N+1)$ can be
obtained by using the symmetry relation (\ref{symmetricity}),
\begin{equation}\label{symmetricity final}
    t_n(aN,N+1)=(-1)^n t_n((1-a)N,N+1).
\end{equation}

The regions of validity of the expansions (\ref{result a<0 final})
and (\ref{result a>0 final}) are, respectively, $a<0$ and $0\leq
a\leq\frac{1}{2}$. However, as we shall show in $\S$\ref{validity
region}, they can be extended to two overlapping regions both
containing a neighborhood of $a=0$.\\

The integral given in (\ref{confluent hypergeometric function for
a>0}) for the confluent hypergeometric function is a typical example
of Airy-type expansion \cite[Chap. VII, $\S$4]{R.Wong}. In fact, we
have already illustrated this point in (\ref{M function in case 2
error est.}), and the region of validity of this formula is
$0<\delta\leq a\leq \frac{1}{2}$. Thus, for $\delta\leq a\leq
\frac{1}{2}$, one can derive from (\ref{result a>0 final}) an
asymptotic approximation for $t_n(x,N+1)$ in terms of the Airy
integral Ai$(\cdot)$ and its
derivative Ai$'(\cdot)$.\\

We now derive from (\ref{result a<0 final}) and (\ref{result a>0
final}) two simple formulas for $t_n(x,N+1)$ when $x$ is fixed,
which in turn means $a=O(1/N)$. First, we consider the case when $x$
is negative, i.e., $a<0$. To calculate the leading coefficient $c_0$
in (\ref{result a<0 final}), we use (\ref{cl when a<0}), (\ref{al
zhankai in a<0}) and (\ref{6.21 a}). Thus, we have
$c_0=a_{0,0}\Gamma\left(\frac{1}{2}\right)$ and
$a_{0,0}=a_0(0)=h_0(a,0)$. From (\ref{h_0 a<0 analycity}), it can be
readily verified that
\begin{equation*}
    h(a,0)=-\frac{\sqrt{2(1-b)}}{b^{3/2}}a+O(a^2)\qquad \text{ as }a\rightarrow
0^-.
\end{equation*}
Hence,
\begin{equation}\label{h(a,0) when a<0 approaches 0}
    c_0=-\frac{\sqrt{2(1-b)\pi}}{b^{3/2}}a+O(a^2)\qquad \quad\text{   as }
    a\rightarrow 0^-.
\end{equation}
To find the value of the constant $\gamma$ in (\ref{result a<0
final}), we recall (\ref{mapping w to u when x<0}) and
(\ref{relation betwen w and u when a<0}), and obtain
\begin{equation*}
    \gamma=\widetilde{f}(t_0(w_-),w_-)-a\ln(-a)+a.
\end{equation*}
Since $t_0(w_-)=t_-$, from (\ref{phase function f for x<0}) and
(\ref{sets of saddle points 2}) it follows that
\begin{equation}\label{gamma when a goes to 0-}
    \gamma=b\ln b+(1-b)\ln(1-b)-2a\ln b+O(a^2)\qquad \text{as
    }a\rightarrow 0^-.
\end{equation}
Inserting (\ref{h(a,0) when a<0 approaches 0}) and (\ref{gamma when
a goes to 0-}) in (\ref{result a<0 final}) gives
\begin{equation}\label{fixed x, expansion, a<0}
    t_n(x,N+1)=\frac{(-1)^{n+1}\Gamma(n+N+2)N^x n^{-2x-2}}{\Gamma(N+1)\Gamma(-x)}\left[1+O\left(\frac{1}{N}\right)\right]
\end{equation}
for fixed $x<0$.\\

Next, we consider the case when $x$ is nonnegative; this means that
$0\leq a\leq \frac{1}{2}$ and $aN$ is bounded. Substituting (\ref{x
fixed, M}) and (\ref{x fixed, M'}) in (\ref{result a>0 final}) gives
\begin{equation}\label{temp in fixed x when a>0}
\begin{split}
  t_n(x,N+1) =& \frac{(-1)^n\Gamma(n+N+2)e^{N\gamma}}{\Gamma(n+1)\Gamma(N-n+1)}\left\{  \frac{\Gamma(x+1)\sin\pi
    x}{\sqrt{N}\left[-N(a+\eta)\right]^{aN+1}\pi}\right. \\
    &
    \left.\times\left[-c_0+\frac{a}{\eta}d_0\right]\left[1+O\left(\frac{1}{N}\right)\right]+O(e^{N\eta})\right\}.
    \end{split}
\end{equation}
The coefficients $c_0$ and $d_0$ can be calculated by using (\ref{cl
when a>0}), (\ref{d_l when a>0}), (\ref{saddle points of u}) and
(\ref{a0 and b0}); the result is
\begin{eqnarray*}
  -c_0+\frac{a}{\eta}d_0 &=& -\sqrt{\pi}\left[a_{0,0}-\frac{a}{\eta}b_{0,0}\right]\sim -\sqrt{\pi}\left[a_{0,0}+u_+b_{0,0}\right]\nonumber \\
   &=& -\sqrt{\pi}h_0(u_+,0).
\end{eqnarray*}
From (\ref{h function when a>0 in analyticity section}), it can be
shown that
\begin{equation}\label{temp fixed x a}
    h_0(u_+,0)=-\frac{2\eta\sqrt{1-b}}{b^{3/2}}+O(a)\qquad
    \text{as }a\rightarrow 0^+.
\end{equation}
Hence,
\begin{equation}\label{coefficient for fix x a>0 in last section}
    -c_0+\frac{a}{\eta}d_0=\frac{2\sqrt{\pi}\eta\sqrt{1-b}}{b^{3/2}}+O(a)\qquad
    \text{as }a\rightarrow 0^+.
\end{equation}
By (\ref{sets of saddle points 1}) and (\ref{sets of saddle points
2}), we also have
\begin{equation}\label{saddle points as a goes to 0}
\begin{split}
 w_+&=1-\frac{a}{b^2}+O(a^2),\qquad \qquad \qquad \hspace{0.6cm} w_{-}=\frac{a}{b^2}+O(a^2),\\
    t_{+}&= \frac{1}{1+b}-\frac{1}{1+b}a+O(a^2),
        \qquad \qquad t_{-}=(1-b)+(1-b)a+O(a^2).
        \end{split}
\end{equation}
Inserting (\ref{saddle points as a goes to 0}) into (\ref{t+- and
w+- and mapping in proof II}), and subtracting the two resulting
equations, we obtain
\begin{equation}\label{eta when a goes to 0}
\begin{split}
  \eta = & -[(1-b)\ln(1-b)+(1+b)\ln(1+b)] \\
   & -2a \ln\frac{(1-b)\ln(1-b)+(1+b)\ln(1+b)}{b^2}+O(a^2).
   \end{split}
\end{equation}
Solving $\gamma$ in (\ref{t+- and w+- and mapping in proof II}) also
gives
\begin{equation}\label{gamma when a goes to 0+}
\begin{split}
  \gamma = & b\ln b+(1-b)\ln(1-b) \\
   &+a \ln\frac{(1-b)\ln(1-b)+(1+b)\ln(1+b)}{b^2}+O(a^2).
   \end{split}
\end{equation}
A combination of (\ref{coefficient for fix x a>0 in last section}),
(\ref{eta when a goes to 0}), (\ref{gamma when a goes to 0+}) and
(\ref{temp in fixed x when a>0}) yields
\begin{equation}\label{result for fixed x>0}
\begin{split}
  t_n(x,N+1)=&\frac{(-1)^{n+1}\Gamma(n+N+2)N^x n^{-2x-2}\Gamma(x+1)}{\Gamma(N+1)\pi}
    \\
   & \times\left\{\sin\pi x
   \left[1+O\left(\frac{1}{N}\right)\right]+O(e^{N\eta})\right\};
   \end{split}
\end{equation}
cf. (\ref{fix temp 1}).

\section{REGIONS OF VALIDITY FOR EXPANSIONS (\ref{result a<0 final}) AND (\ref{result a>0 final})}\label{validity region}

We first consider the case of expansion (\ref{result a<0 final}),
which is valid for $a<0$. Similarity of the two asymptotic formulas
in (\ref{fixed x, expansion, a<0}) and (\ref{result for fixed x>0})
suggests that the region of validity of expansion (\ref{result a<0
final}) may be extended beyond the origin. This is indeed the case,
and we shall show that expansion (\ref{result a<0 final}) actually
holds for $a\in(-\infty, a_--\delta]$ for any small number
$\delta>0$, $a_-$ being the point on the positive real line given in
(\ref{critical value of a}), where the two saddle points $w_\pm$
coalesce with each other.\\

To prove this, we recall that when $0\leq a\leq a_--\delta$, we have
$f(t_0(w_+),w_+)<f(t_0(w_-),w_-)$; see a statement following
(\ref{SDP equation for w+}). In other words, in the case $0\leq
a\leq a_--\delta$, the contribution from the saddle point $w_+$ is
exponentially small in comparison with that from the saddle point
$w_-$. So, up to an exponentially small term (E.S.T), we need deal
with only one saddle point, namely, $w_-$. By an argument similar to
that in case (iii) of $\S$\ref{sec 7} (see, especially, equation
(\ref{case 3 separate into two parts})), the integral representation
for $t_n(x,N+1)$ given in (\ref{IP after the t mapping}) can now be
written as
\begin{equation}\label{ip after the t mapping, a>0 Gamma}
\begin{split}
t_n(x,N+1)=& \frac{(-1)^n}{\pi}\frac{\Gamma(n+N+2)}{\Gamma(n+1)\Gamma(N-n+1)}\\
&\times\int_{-\infty}^{+\infty}\left[\sin (aN\pi)
\int_0^{w_+}\frac{1}{w-1}e^{N
\widehat{f}({t_0(w)},w)}\mathrm{d}w+\text{E.S.T}\right]\frac{\mathrm{d}t}{\mathrm{d}
\tau}e^{-N\tau^2}\mathrm{d}\tau,
\end{split}
\end{equation}
where
\begin{equation}
\begin{split}
    \widehat{f}({t_0(w)},w)=& b \ln(1-t_0(w))+(1-b)\ln t_0(w)\\
    &+a\ln w-a \ln(1-w)+b \ln[1-(1-t_0(w))w],
    \end{split}
\end{equation}
which is real on the integration path of $w$. [Note that the term
$a\ln(w-1)$ in $f(t_0(w),w)$ is replaced by $a\ln(1-w)$ in
$\widehat{f}({t_0(w)},w)$.] Similar to (\ref{mapping w to u when
x<0}), we make the transformation $w\rightarrow u$ defined by
\begin{equation}\label{mapping w to u when x>0 gamma}
    \widehat{f}(t_0(w),w)=a\ln (u)-u+\gamma,
\end{equation}
with
\begin{equation}\label{relation betwen w and u when a>0 gamma}
    u(w_-)=a.
\end{equation}
Inserting (\ref{mapping w to u when x>0 gamma}) into (\ref{ip after
the t mapping, a>0 Gamma}) gives
\begin{equation}\label{IR after mapping in x>0 Gamma}
\begin{split}
    t_n(x,N+1)=&\frac{(-1)^n}{\pi}\frac{\Gamma(n+N+2)}{\Gamma(n+1)\Gamma(N-n+1)}\int_{-\infty}^{+\infty}\biggr[\sin (aN\pi) e^{N\gamma}\\
& \times\left. \int_{0}^{\infty}\frac{h(u,\tau)}{u} e^{N \left(a\ln
(u)-u\right)}\mathrm{d}u+\text{E.S.T}\right]e^{-N\tau^2}\mathrm{d}\tau,
\end{split}
\end{equation}
where
\begin{equation}\label{h function when a>0 gamma}
    h(u,\tau)=\frac{u}{w-1}\frac{\mathrm{d} w}{\mathrm{d}
u}\frac{\mathrm{d} t}{\mathrm{d} \tau}.
\end{equation}
The derivative $\mathrm{d} w/\mathrm{d} u$ is again given by
(\ref{dw over du when a<0}) for $u\neq a$ and (\ref{dw over du when
a<0 saddle point}) for $u=a$. Let $h_0(u,\tau)\equiv h(u,\tau)$, and
define the sequence $\left\{h_l(u,\tau)\right\}$ as in (\ref{hl when
a<0}). Following the procedure as in case (ii) of $\S$\ref{sec 6},
we arrive at
\begin{equation}\label{result a>0 gamma}
\begin{split}
      t_n(x,N+1)\sim &\frac{(-1)^n\Gamma(n+N+2)}{\Gamma(n+1)\Gamma(N-n+1)}\\
      &\times \left[\frac{N^{-a N}e^{N\gamma}\Gamma(a
  N)\sin(aN\pi)}{\pi}\sum_{l=0}^{\infty}\frac{c_l}{N^{l+\frac{1}{2}}}+\text{E.S.T}\right],
\end{split}
\end{equation}
where the coefficients $c_l$ are as given in (\ref{cl when a<0}).
Although the constant $\gamma$ in (\ref{result a>0 gamma}) is
determined by the mapping defined by (\ref{mapping w to u when x>0
gamma}) and (\ref{relation betwen w and u when a>0 gamma}) while the
corresponding constant $\gamma$ in (\ref{result a<0 final}) is
determined by a different mapping defined by (\ref{mapping w to u
when x<0}) and (\ref{relation betwen w and u when a<0}), the two
constants are actually the same and can be defined by a single
equation
\begin{equation}
    \gamma=\text{Re }(\widetilde{f}(t_0(w_-),w_-)-a\ln (-a))-a,
\end{equation}
where $\widetilde{f}(t_0(w),w)$ is the same function given in
(\ref{mapping w to u when x<0}) or (\ref{phase function f for x<0}).
[Recall that $\gamma$ is a real number by Theorem \ref{thm 1} in
$\S$\ref{sec 4}.] Thus, the two expansions (\ref{result a<0 final})
and (\ref{result a>0 gamma}) are exactly the same, except for an
exponentially small term. The region of validity for expansion
(\ref{result a<0 final}) is therefore extended from $(-\infty, 0)$
to $(-\infty, a_--\delta]$, $0<\delta<a_-$.\\

Next, we consider the case of expansion (\ref{result a>0 final}). In
the derivation of expansion (\ref{result a<0 final}) when $a<0$, we
have used a mapping $w\rightarrow u$ defined by (\ref{mapping w to u
when x<0}). The right-hand side of this equation is the phase
function of the Hankel loop integral for Gamma function; see
(\ref{gamma function in use}). But, as an alternative, we can also
use the phase function in the integral representation of the
confluent hypergeometric function, as in what we have done when
$0\leq a\leq \frac{1}{2}$. This will enable us to extend the
validity of expansion (\ref{result a>0 final}) to negative values of
$a$. To prove this, we mention that for Re$c>$Re$d$, instead of
(\ref{m function a>0}), we have
\begin{equation}\label{m function a<0}
    M(d,c,z)=-\frac{\Gamma(c)\Gamma(1-d)}{2\pi
    i\Gamma(c-d)}\int_{\gamma_2}(-u)^{d-1}(1-u)^{c-d-1}e^{zu}\mathrm{d}u,
\end{equation}
where $\gamma_2$ is the integration path given in (\ref{integral
temp 4}). This integral representation can be established in the
same manner as (\ref{integral temp 4}) or (\ref{integral temp 3}).
Thus, with $d=aN+1$ ($a<0$), $c=1$ and $z=\eta N$,
$\textbf{M}(d,c,z):=M(d,c,z)/\Gamma(c)$ can be written as
\begin{equation}\label{confluent hypergeometric function for a<0}
    \textbf{M}(aN+1,1,\eta N)=\frac{1}{2\pi
    i}\int_{\gamma_2}e^{N\widehat{\psi}(u)}\frac{\mathrm{d}u}{u-1},
\end{equation}
where $\widehat{\psi}(u)=a\ln(-u)-a\ln(1-u)+\eta u$. The saddle
points $u_\pm$ of $\widehat{\psi}(u)$ are the same as those of
$\psi(u)$ given in (\ref{saddle points of u})
\begin{equation}\label{saddle points of u a<0}
    u_\pm=\frac{\eta\pm\sqrt{\eta^2+4a\eta}}{2\eta}.
\end{equation}
It can be shown that the movements of $u_\pm$ when $a<0$ and
$\eta<0$ are like those of the saddle points $w_\mp$ of
$\widetilde{f}(t_0(w),w)$ when $a<0$. Also, we have
\begin{equation}\label{M' function in expansion a<0}
 \textbf{M}'(aN+1,1,\eta N)=\frac{1}{2\pi i}
 \int_{\gamma_2} e^{N\widehat{\psi}(u)} \frac{u}{u-1}\mathrm{d}u,
\end{equation}
where the derivative is taken with respect to the third variable in
$\textbf{M}(d,c,z)$.\\

Now, define the mapping $w\rightarrow u$ by
\begin{equation}
    \widetilde{f}(t_0(w),w)=\widehat{\psi}(u)+\gamma=a \ln(-u)-a\ln(1-u)+\eta u+\gamma\label{mapping w to u confluent
a<0}
\end{equation}
with
\begin{equation}\label{the relation between w+- and u+- confluent a<0}
    u(w_+^{(\pm)})=u_-^{(\pm)},\quad\quad\quad\quad u(w_-)=u_+,
\end{equation}
where $\eta$ and $\gamma$ are real numbers. (Note that, similar to
(\ref{correspondence on upper edge}) and (\ref{correspondence on
lower edge}), since $w_+$ is real and $w_+>1$, we have $w_+$ on both
upper and lower edges of the branch cut $[0,+\infty)$ along the real
line. We denote these points by $w_+^{(\pm)}$, respectively. The
situation is the same with $u_-^{(\pm)}$). Coupling (\ref{mapping w
to u confluent a<0}) and (\ref{IP after the t mapping a<0}), the
double integral representation for $t_n(x,N+1)$ takes the canonical
form
\begin{equation}\label{IP for x<0 deformed confluent}
\begin{split}
    t_n(x,N+1)=&\frac{(-1)^n}{2\pi
i}\frac{\Gamma(n+N+2)e^{N\gamma}}{\Gamma(n+1)\Gamma(N-n+1)}\\
&\times
\int_{-\infty}^{+\infty}\int_{\gamma_u}\frac{h(u,\tau)}{u-1}e^{N
\widehat{\psi}(u)-N\tau^2}\mathrm{d}u\mathrm{d}\tau,
\end{split}
\end{equation}
instead of the one given in (\ref{IR after mapping in x<0}). In
(\ref{IP for x<0 deformed confluent}),
\begin{equation}\label{h function a<0 confluent}
h(u,\tau)=\frac{u-1}{w-1}\frac{\mathrm{d} w}{\mathrm{d}
u}\frac{\mathrm{d} t}{\mathrm{d} \tau},
\end{equation}
and $\gamma_u$ is the steepest descent path of $\widehat{\psi}(u)$
passing through $u_+$. Let $h_0(u,\tau)\equiv h(u,\tau)$, and define
the sequence $\left\{h_n(u,\tau)\right\}$ as in
(\ref{h_l})-(\ref{expansion of coeffiencet an and bn}). Following
the same argument as in Case 1 ($0\leq a\leq \frac{1}{2}$) of
$\S$\ref{sec 6}, we obtain
\begin{equation}\label{result a<0 confluent}
\begin{split}
  t_n(x,N+1) \sim  \frac{(-1)^n\Gamma(n+N+2)e^{N\gamma}}{\Gamma(n+1)\Gamma(N-n+1)}&\left[\mathbf{M}(a N+1,1,\eta N)\sum_{l=0}^{\infty}\frac{c_l}{N^{l+\frac{1}{2}}}\right.\\
   & \left. +\mathbf{M}'(a N+1,1,\eta
   N)\sum_{l=0}^{\infty}\frac{d_l}{N^{l+\frac{1}{2}}}\right],
\end{split}
\end{equation}
where $c_l$ and $d_l$ are as given in (\ref{c_l}) and (\ref{d_l}), respectively.\\

Although the constants $\eta$ and $\gamma$ in (\ref{result a<0
confluent}) and (\ref{result a>0 final}) are determined by two
different sets of equations, namely, (\ref{mapping w to u confluent
a<0})-(\ref{the relation between w+- and u+- confluent a<0}) and
(\ref{mapping w to u})-(\ref{the relation between w+- and u+-}),
respectively, one can use a single system of equations to determine
these two constants for $a\in(-\infty, \infty)$. Indeed, we can
define them by
\begin{equation}\label{temp temp}
\left\{\begin{split}
    \text{Re }f(t_0(w_-),w_-)&=\text{Re }\psi(u_+)+\gamma,\\
    \text{Re }f(t_0(w_+),w_+)&=\text{Re }\psi(u_-)+\gamma,
\end{split}\right.
\end{equation}
where $f(t_0(w),w)$ and $\psi(u)$ are the same as those given in
(\ref{mapping w to u}). (If $w_-$ and $u_+$, or $w_+$ and $u_-$, are
on the branch cut, we should choose the correspoinding saddle points
either both on the upper edge or both on the lower edge of the real
line in the equations of (\ref{temp temp})). Thus, (\ref{result a<0
confluent}) and (\ref{result a>0 final}) are exactly the same,
including the coefficients in these expansions, and the region of
validity of expansion (\ref{result a>0 final}) has been extended
from $0\leq
a\leq \frac{1}{2}$ to $-\infty<a\leq \frac{1}{2}$.\\

\section{SMALL AND LARGE ZEROS}\label{sec 9}
It is well-known that from an asymptotic approximation of a
function, one can deduce an asymptotic approximation for its zeros.
The easiest way to achieve this is to apply the following result
given in Hethcote \cite{Herbert}:
\begin{lem}\label{lemma zeros}
In the interval $[a-\rho, a+\rho]$, suppose
$f(t)=g(t)+\varepsilon(t)$, where $f(t)$ is continuous, $g(t)$ is
differentiable, $g(a)=0$, $m=\min|g'(t)|>0$, and
\begin{equation*}
    E=\max|\varepsilon(t)|<min\left\{|g(a-\rho)|,|g(a+\rho)|\right\}.
\end{equation*}
Then, there exists a zero $c$ of $f(t)$ in the interval such that
\begin{equation*}\label{==}
    |c-a|\leq\frac{E}{m}.
\end{equation*}
\end{lem}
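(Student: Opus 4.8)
The plan is to reduce everything to the intermediate value theorem together with the strict monotonicity of $g$. First I would note that since $g$ is differentiable on $[a-\rho,a+\rho]$ and $|g'(t)|\ge m>0$ there, $g'$ never vanishes, so by Darboux's intermediate-value property of derivatives $g'$ keeps a constant sign on the whole interval. Replacing $f$, $g$, $\varepsilon$ by $-f$, $-g$, $-\varepsilon$ if necessary — this changes neither the zeros of $f$ nor any of the hypotheses — I may assume $g'(t)\ge m>0$, so $g$ is strictly increasing with $g(a)=0$; in particular $g(a-\rho)<0<g(a+\rho)$.

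Next I would split into two cases according to the size of $E/m$. If $E/m\ge\rho$, then because $|\varepsilon(t)|\le E<|g(t)|$ at the two endpoints, $f(a-\rho)$ inherits the (negative) sign of $g(a-\rho)$ and $f(a+\rho)$ inherits the (positive) sign of $g(a+\rho)$; since $f$ is continuous, the intermediate value theorem yields a zero $c\in(a-\rho,a+\rho)$, and then $|c-a|\le\rho\le E/m$, which is the desired bound.

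In the remaining case $E/m<\rho$, the two points $a\pm E/m$ lie inside $(a-\rho,a+\rho)$. By the mean value theorem, $g(a+E/m)=g(a+E/m)-g(a)=g'(\xi_+)(E/m)\ge E$ for some $\xi_+$, and likewise $g(a-E/m)\le -E$; since $|\varepsilon(t)|\le E$ throughout the interval, this gives $f(a+E/m)\ge 0$ and $f(a-E/m)\le 0$. Applying the intermediate value theorem to $f$ on $[a-E/m,\,a+E/m]$ produces a zero $c$ there, whence $|c-a|\le E/m$, as claimed.

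I do not expect a genuine obstacle: the statement is elementary. The only points requiring a little care are the use of Darboux's theorem to get monotonicity (since $g$ is only assumed differentiable, not $C^1$) and the separation of the trivial case $E/m\ge\rho$, which is what guarantees that the test points $a\pm E/m$ actually stay in the interval where $f$, $g$, $\varepsilon$ are controlled. Everything else is just the intermediate value and mean value theorems.
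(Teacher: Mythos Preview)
Your proof is correct. The paper does not supply its own proof of this lemma at all: it is simply quoted from Hethcote \cite{Herbert} and then applied. Your argument --- Darboux to get monotonicity of $g$, then the intermediate value theorem at either the endpoints $a\pm\rho$ or the interior test points $a\pm E/m$ depending on whether $E/m\ge\rho$ --- is exactly the standard elementary proof and there is nothing to compare it against here.
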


To state our result, we first recall the numbers $a_\pm$ given in
(\ref{critical value of a})
\begin{equation*}
    a_\pm=\frac{1\pm\sqrt{1-b^2}}{2},
\end{equation*}
where $b$ is a fixed number in $(0,1)$. Clearly, $0\leq
a_-<\frac{1}{2}<a_+<1$. The number $b$ can be as close to $0$ or $1$
as it may be, but it is fixed. Hence, $a_-$ can be close to $0$ but
does not tend to $0$. Similarly, $a_+$ can be close to $1$ but
does not approach $1$.\\

\begin{thm} Let $x_{N+1,s}$ denote the $s$th zero of the discrete Chebyshev
polynomial $t_n(x,N+1)$, arranged in ascending order
$0<x_{N+1,1}<x_{N+1,2}<\cdot\cdot\cdot<x_{N+1,n-1}<x_{N+1,n}$.\\ If
$s-1<a_-N$, then we have
\begin{equation}\label{temp in sec 10}
    x_{N+1,s}=(s-1)+O\left\{e^{N[\widehat{\psi}(u_-)-\widehat{\psi}(u_+)]}\right\}
\end{equation}
as $N\rightarrow\infty$, where
\begin{equation}\label{}
    \widehat{\psi}(u_-)-\widehat{\psi}(u_+)=2a\ln\frac{1+\sqrt{1+4a/\eta}}{1-\sqrt{1+4a/\eta}}+\eta\sqrt{1+4a/\eta}.
\end{equation}
If $s>n-(1-a_+)N$, then we have
\begin{equation}\label{}
\begin{split}
    x_{N+1,s}&=N-x_{N+1,n-s+1}\\
    &=N-n+s+O\left\{e^{N[\widehat{\psi}(u_-)-\widehat{\psi}(u_+)]}\right\}
    \end{split}
\end{equation}
as $N\rightarrow \infty$.
\end{thm}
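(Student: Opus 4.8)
The plan is to read off the zero asymptotics from the uniform expansion (\ref{result a>0 final}) by means of Lemma~\ref{lemma zeros} (Hethcote), treating the small zeros directly and deducing the large zeros from the symmetry relation (\ref{symmetricity final}). Throughout, $b=n/N\in(0,1)$ is fixed and $a=x/N$.

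First I would put the expansion into a form convenient for locating roots. Combining (\ref{result a>0 final}) with the asymptotics of $\mathbf{M}(aN+1,1,\eta N)$ and $\mathbf{M}'(aN+1,1,\eta N)$ established in Cases~(iii) and (iv) of \S\ref{sec 7} — namely (\ref{case 3 error est. M function simplified}), (\ref{case 3 error est. M' function simplified}), (\ref{x fixed, M}), (\ref{x fixed, M'}) — and using $aN\pi=\pi x$, one obtains, uniformly for $0\le x\le (a_--\delta)N$ with any fixed $\delta\in(0,a_-)$, a representation
\begin{equation*}
    t_n(x,N+1)=K_n(x)\bigl[\sin(\pi x)\,G_n(x)+H_n(x)\bigr],
\end{equation*}
where $K_n$ is a nowhere-vanishing analytic function of $x$ on $(-1,\infty)$ (its value at integers being irrelevant to the zeros), $G_n$ is analytic with $G_n(x)=1+O(N^{-1})$ uniformly, and $H_n$ is analytic with $H_n(x)=O\!\left(e^{N[\widehat{\psi}(u_-)-\widehat{\psi}(u_+)]}\right)$ uniformly; here $u_\pm$ are as in (\ref{saddle points of u}), $\widehat\psi(u)=a\ln u-a\ln(1-u)+\eta u$, and $\eta=\eta(a)$ is fixed as in Theorem~\ref{thm 1}. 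Producing this decomposition — in particular, checking that the leading coefficient (the analogue of $-c_0+\frac{a}{\eta}d_0$, which for $a\to0^+$ is (\ref{coefficient for fix x a>0 in last section})) stays nonzero for all $0\le a\le a_--\delta$ so that $K_n$ is genuinely nonvanishing, and that the remainder is truly exponentially, not merely $O(N^{-p})$, small — is the main obstacle; once it is in hand the rest is routine.

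Next set $F(x):=t_n(x,N+1)/K_n(x)=\sin(\pi x)G_n(x)+H_n(x)$, which has exactly the zeros of $t_n(\,\cdot\,,N+1)$. Fix $\rho\in(0,\tfrac12)$ and an integer $m$ with $0\le m\le(a_--\delta)N$, and apply Lemma~\ref{lemma zeros} on $[m-\rho,m+\rho]$ with $g(x)=\sin(\pi x)G_n(x)$ and $\varepsilon(x)=H_n(x)$. Then $g(m)=0$; from $g'(x)=\pi\cos(\pi x)G_n(x)+\sin(\pi x)G_n'(x)$ and a Cauchy estimate giving $G_n'=O(N^{-1})$ one gets $\min_{[m-\rho,m+\rho]}|g'|\ge\tfrac{\pi}{2}\cos(\pi\rho)=:m_0>0$ for large $N$; moreover $|g(m\pm\rho)|\ge\tfrac12\sin(\pi\rho)$, while $E:=\sup_{[m-\rho,m+\rho]}|H_n|=O\!\left(e^{N[\widehat\psi(u_-)-\widehat\psi(u_+)]}\right)<\tfrac12\sin(\pi\rho)$ for large $N$. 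Lemma~\ref{lemma zeros} then produces a zero $c_m$ of $t_n(\,\cdot\,,N+1)$ with $|c_m-m|\le E/m_0=O\!\left(e^{N[\widehat\psi(u_-)-\widehat\psi(u_+)]}\right)$. The same Cauchy-estimate bound applied to $F'$ shows $F$ is strictly monotone on $[m-\rho,m+\rho]$, so $c_m$ is its only zero there; and on each $[m+\rho,m+1-\rho]$ one has $|F|\ge\tfrac12\sin(\pi\rho)-E>0$, so $F$ has no zeros there. Hence the zeros of $t_n(\,\cdot\,,N+1)$ in $[0,(a_--\delta)N]$ are exactly $c_0<c_1<c_2<\cdots$; since all zeros of $t_n(\,\cdot\,,N+1)$ are real and positive, $x_{N+1,s}=c_{s-1}$, which is (\ref{temp in sec 10}), and the closed form for $\widehat\psi(u_-)-\widehat\psi(u_+)$ in the statement follows by substituting (\ref{saddle points of u}) into $\widehat\psi$.

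Finally, for the large zeros I would invoke (\ref{symmetricity final}): since $t_n(x,N+1)=0$ iff $t_n(N-x,N+1)=0$ and $x\mapsto N-x$ reverses the ascending list of zeros, $x_{N+1,s}=N-x_{N+1,n-s+1}$. Because $1-a_+=a_-$, the hypothesis $s>n-(1-a_+)N$ is precisely $(n-s+1)-1<a_-N$, so the small-zero result applies to the $(n-s+1)$-st zero and gives $x_{N+1,n-s+1}=(n-s)+O\!\left(e^{N[\widehat\psi(u_-)-\widehat\psi(u_+)]}\right)$; therefore $x_{N+1,s}=N-n+s+O\!\left(e^{N[\widehat\psi(u_-)-\widehat\psi(u_+)]}\right)$, as claimed.
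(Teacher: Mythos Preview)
Your approach is essentially the same as the paper's: combine the uniform expansion (\ref{result a>0 final}) with the asymptotics of $\mathbf{M}$ and $\mathbf{M}'$ from Cases~(iii) and~(iv) of \S\ref{sec 7} to extract a $\sin(\pi x)$ factor plus an exponentially small correction, then apply Hethcote's lemma and the symmetry relation. The paper's own proof is a two-sentence sketch that cites exactly the same ingredients; your write-up is considerably more careful, in that you explicitly verify the hypotheses of Lemma~\ref{lemma zeros}, argue uniqueness of the zero in each interval $[m-\rho,m+\rho]$, and rule out stray zeros in between --- points the paper leaves implicit. The ``main obstacle'' you flag (nonvanishing of the leading coefficient for all $0\le a\le a_--\delta$) is also not addressed in the paper.
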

\begin{proof}
The proof is quite straightforward. For $a<0$, we know from
(\ref{result a<0}) that the discrete Chebyshev polynomials do not
have zeros. For $0\leq a<a_-$, it follows from (\ref{case 3 error
est. M function simplified}), (\ref{case 3 error est. M' function
simplified}) that both $\textbf{M}$ and $\textbf{M}'$ have zeros at
approximately $x=m$ with a exponentially small error, where $m$ is a
nonnegative integer; see the remark following (\ref{case 3 error
est. M function simplified}). The results now are obtained by a
direct application of Lemma \ref{lemma zeros} and the symmetry
relation (\ref{symmetricity}). Also, note that for $0\leq a<a_-$,
the exponent $N[\widehat{\psi}(u_-)-\widehat{\psi}(u_+)]$ in
(\ref{temp in sec 10}) is negative, unless
$\sqrt{1+4a/\eta}\rightarrow 0$ in which case $a\rightarrow a_-$
(i.e., $x$ is no longer small).
\end{proof}

\bibliographystyle{plain}

\end{document}